\theoremstyle{plain} 
\newtheorem{thm}{Theorem}[section]
\newtheorem{prop}[thm]{Proposition}
\newtheorem{lemma}[thm]{Lemma}
\theoremstyle{remark}
\newtheorem{remark}[thm]{Remark}
\theoremstyle{definition}
\newtheorem{defin}[thm]{Definition}
\newcommand{\QQ}{\mathbb{Q}}
\newcommand{\ZZ}{\mathbb{Z}}
\newcommand{\Qbar}{\overline{\QQ}}
\newcommand{\Kbar}{\overline{K}}
\newcommand{\kbar}{\overline{k}}
\DeclareMathOperator{\sgn}{sgn}
\DeclareMathOperator{\Par}{Par}
\DeclareMathOperator{\Orb}{Orb}
\DeclareMathOperator{\ch}{char}
\DeclareMathOperator{\Gal}{Gal}
\DeclareMathOperator{\res}{Res}
\DeclareMathOperator{\Aut}{Aut}
\newcommand{\dsps}{\displaystyle}
\newcommand{\mapa}{\bm{\alpha}}
\newcommand{\mapb}{\bm{\beta}}
\newcommand{\mapt}{\bm{\tau}}
\newcommand{\Pink}{\textup{Pink}}
\newcommand{\geom}{\textup{geom}}
\newcommand{\arith}{\textup{arith}}
\begin{document}

\title[Arboreal Galois groups of PCF quadratic polynomials]
{Arboreal Galois groups of postcritically finite quadratic polynomials: The strictly preperiodic case}

\author[R. L. Benedetto]{Robert L. Benedetto}
\address{Robert L. Benedetto \\ Department of Mathematics and Statistics \\ Amherst College \\ Amherst, MA 01002 \\ USA}
\email{rlbenedetto@amherst.edu}

\author[D. Ghioca]{Dragos Ghioca}
\address{Dragos Ghioca \\ Department of Mathematics \\ University of British Columbia \\ Vancouver, BC V6T 1Z2 \\ Canada}
\email{dghioca@math.ubc.ca}

\author[J. Juul]{Jamie Juul}
\address{Jamie Juul \\ Department of Mathematics \\ Colorado State University \\ Fort Collins, CO 80523 \\ USA}
\email{jamie.juul@colostate.edu}

\author[T. J. Tucker]{Thomas J. Tucker}
\address{Thomas J. Tucker\\Department of Mathematics\\ University of Rochester\\
Rochester, NY, 14620 \\ USA}
\email{thomas.tucker@rochester.edu}

\subjclass[2010]{37P05, 11G50, 14G25}
%

\date{July 2, 2025}

\begin{abstract} 
In \cite{BGJT2}, we provided an explicit description of the arboreal Galois group of the postcritically finite polynomial $f(z) = z^2 +c$ in the special case when the critical point $0$ is periodic under the action of $f(z)$. In the current paper, we complete the picture for all postcritically finite polynomials by addressing the cases when $0$ is strictly preperiodic for the polynomial $f(z)$. 
\end{abstract}

\maketitle


\section{Introduction}


\subsection{General notation}

Let $K$ be a field of characteristic not equal to $2$
with algebraic closure $\Kbar$,
and let $f(z)\in K[z]$ be a polynomial of degree $2$.
After conjugating by a $K$-rational change of coordinates,
we may assume that $f(z)=z^2+c$ for some $c\in K$.

We consider the iterates $f^n$ of $f$ under composition,
where $f^0(z):=z$, and where $f^{n+1}=f\circ f^n$ for each integer $n\geq 0$.
A point $y\in\Kbar$ is said to be  \emph{preperiodic} if $f^n(y)=f^m(y)$ for some $n>m\geq 0$.
In the special case that $y$ is \emph{periodic} (i.e., $m=0$ with the above notation), then its \emph{exact period} is the smallest $n\geq 1$ for which $f^n(y)=y$.
An even further special case of periodicity occurs when $n=1$, in which case we say
$y$ is \emph{fixed} under the action of $f$.
If $y$ is preperiodic but not periodic, then we say it is \emph{strictly preperiodic}.

Given a point $x_0\in K$, then for every integer $n\geq 0$, we define
\[ K_n:=K_{x_0,n}:= K(f^{-n}(x_0))
\qquad\text{and}\qquad
G_n:= G_{x_0,n}:=\Gal(K_n/K) \]
to be the $n$-th preimage field and its associated Galois group.
Note that $\cdots K_3/K_2/K_1/K$ is a tower of field extensions,
which we view as contained in $\Kbar$.
Thus, we may further define
\[ K_\infty:=K_{x_0,\infty}:=\bigcup_{n\geq 0} K_{x_0,n}
\qquad\text{and}\qquad
G_\infty:=G_{x_0,\infty}:=\Gal(K_{x_0,\infty}/K)\cong \varprojlim_n G_{x_0,n} .\]

If the backward orbit
\[ \Orb_f^-(x_0) := \bigcup_{n\geq 0} f^{-n}(x_0) \]
contains no critical values of $f$,
then each $f^{-n}(x_0)$ has exactly $2^n$ elements. If, in addition,
$x_0$ is not periodic under $f$, then the sets $f^{-n}(x_0)$ are pairwise disjoint,
and hence $\Orb_f^-(x_0)$ has the structure of an infinite binary rooted tree $T_\infty$,
with $x\in f^{-(n+1)}(x_0)$ connected to $f(x)\in f^{-n}(x_0)$ by an edge.
Thus, the action of the Galois group $G_{\infty}$ on the backward orbit induces
an embedding of $G_{\infty}$ into the automorphism group $\Aut(T_\infty)$ of the tree.
Similarly, for each $n\geq 0$, the action of $G_n$ on $f^{-n}(x_0)$ induces an embedding
of $G_n$ into the automorphism group $\Aut(T_n)$ of the finite binary rooted tree $T_n$
with $n$ levels. For this reason, the groups $G_n$ and $G_\infty$ have come to be
known as \emph{arboreal} Galois groups.
Moreover, given our interest in this action, whenever we discuss homomorphisms
or isomorphisms between groups acting on trees, we always mean homomorphisms
that are equivariant with respect to this action. The problem of fully understanding the arboreal Galois groups has generated a great deal of research in the recent years; see 
\cite{ABCCF, BDGHT, BFHJY, BGJT1, BGJT2, BJ19, JKMT, Juul, PinkPCF}, for example.


\subsection{Postcritically finite quadratic polynomials}

In this paper, we consider the case that $f$ is \emph{postcritically finite}, or PCF,
meaning that all of the the critical points of $f$ are preperiodic.
Since we have assumed $f(z)=z^2+c$, the critical points are $0$ and $\infty$,
with $\infty$ necessarily fixed; thus, to say that $f$ is PCF is equivalent to saying that
$0$ is preperiodic under $f$. In this case, it is well known that $G_{\infty}$ is
of infinite index in $\Aut(T_{\infty})$.
Following the notation in \cite{PinkPCF}, there is a maximal integer $r\geq 1$ such that
the values $f(0), f^2(0),\ldots,f^r(0)$ are all distinct.
That is, we have $f^{r+1}(0)=f^{s+1}(0)$ for some minimal integers $r>s\geq 0$.
Equivalently, since the two preimages of $f(y)$ are $\pm y$, we have $f^r(0)=-f^s(0)$
for minimal integers $r>s\geq 0$.
We are interested in the case that the critical point $0$ is strictly preperiodic,
in which case $s\geq 1$, and the (\emph{strict}) \emph{forward orbit}
\[ \Orb_f^+(0) := \{f^i(0) : i\geq 1 \}\]
of $0$ under $f$ consists of precisely $r$ points.
Also, we note that the point $f^{s+1}(0)=f^{r+1}(0)$ is periodic of exact period $r-s\geq 1$,
preceded by a tail $\{0,f(0),\ldots,f^s(0)\}$ of cardinality $s+1\geq 2$.


\subsection{Pink's work over function fields}
\label{ssec:PinkSummary}

In \cite{PinkPCF}, Pink describes the group $G_{\infty}$ for each of the various
choices of $r,s$ when the quadratic polynomial $f$ is PCF, in the case that $K=\kbar(t)$
is a rational function field over an algebraically closed field $\kbar$,
and that the root point of the preimage tree is $x_0=t$.
Pink denotes this group $G^{\geom}$, 
and he proves that it is isomorphic to a subgroup of $\Aut(T_{\infty})$ that he simply calls $G$,
but which we denote $G^{\Pink}_{r,s,\infty}$.
(When $s=0$, we sometimes write simply $G^{\Pink}_{r,\infty}$.)
He defines $G^{\Pink}_{r,s,\infty}$ via explicit (topological) generators,
each arising from the action of inertia in the context of $G^{\geom}$.

When $K=k(t)$ for $k$ \emph{not} algebraically closed, Pink denotes
the resulting group $G_{\infty}$ as $G^{\arith}$, and he describes how it fits
into a short exact sequence
\[ 1 \longrightarrow G^{\Pink}_{r,s,\infty} \longrightarrow G^{\arith}
\longrightarrow \Gal(\kbar/k)/N \longrightarrow 1, \]
for some normal subgroup $N$ of $\Gal(\kbar/k)$ depending on $r$, $s$, and $k$.


\subsection{Our results}

For each pair of integers $r>s\geq 1$, we construct
subgroups $B_{r,s,\infty}\subseteq M_{r,s,\infty}$ of
$\Aut(T_{\infty})$ (see Definition~\ref{def:MRSiroot} and also Theorem~\ref{thm:MRSiroot}), coinciding with Pink's group
$G^{\Pink}_{r,s,\infty}\cong G^{\geom}\subseteq G^{\arith}$, and we
show that the arboreal Galois group $G_{\infty}$ is isomorphic to a
subgroup of $M_{r,s,\infty}$. In this paper, we work under the assumption that  $s\ge 1$,
since we covered the special case $s=0$ (i.e., $0$ is periodic under the action of $f$)
in our earlier paper \cite{BGJT2}. 

We also exclude the case of the Chebyshev quadratic polynomial $f(z)=z^2-2$ (which is the case
$s=1$ and $r=2$),
since the Galois group in this case has an alternative classical description.
Indeed, the quadratic Chebyshev polynomial satisfies
\[ f^n(z+1/z) = z^{2^n} + 1/z^{2^n} \quad\text{for all}\quad n\geq 0. \]
Thus, arboreal extensions arising from the Chebyshev polynomial $f(z)=z^2-2$,
and hence their associated Galois groups, can be expressed in terms of
classical Kummer extensions.  

Our arguments apply over general fields
with arbitrary base points, rather than restricting to the case
$K=k(t)$ with base point $t$.  Our approach is also
more concrete than Pink's; we define the groups $B_{r,s,\infty}$ and
$M_{r,s,\infty}$ not by generators but rather as the set
of all $\sigma\in\Aut(T_{\infty})$ satisfying certain parity conditions,
which also describe the action of elements of $G_\infty$ on $\sqrt{2}$ and on $2$-power roots of unity in $K_\infty$.
As a by-product of our method, we obtain necessary and sufficient conditions for $G_{\infty}$
to be the whole group $M_{r,s,\infty}$ (see Theorem~\ref{thm:condMisGarith}). 

Our approach follows the general strategy of our previous paper \cite{BGJT2},
but there are additional technical complications arising in the strictly preperiodic cases.
In particular, one of the cases ($s=2$ and $r=3$) requires a significantly more delicate argument
than any other case. (Analogous complications arose in Pink's analysis of the same case in \cite{PinkPCF}.)



\subsection{Tree labelings}
\label{ssec:label}

To prove and even state our main results, we 
label the nodes of the binary rooted trees $T_n$ and $T_{\infty}$,
using words in the two symbols $a$, $b$.
That is, for each integer $m\geq 0$ and each node $y$ at the $m$-th level of the tree,
we assign $y$ a \emph{label} in the form of a word $w\in\{a,b\}^m$ of length $m$,
in such a way that for every such $m$ and $y$, the two nodes lying above $y$
have labels $wa,wb\in\{a,b\}^{m+1}$. (Of course, in the tree $T_n$, this latter restriction
is vacuous for nodes $y$ in the top level $m=n$.)
See Figure~\ref{fig:treelabel} for an example of a labeling of the tree $T_3$.
Although the root node has the empty label $()$, we will often denote it as $x_0$.

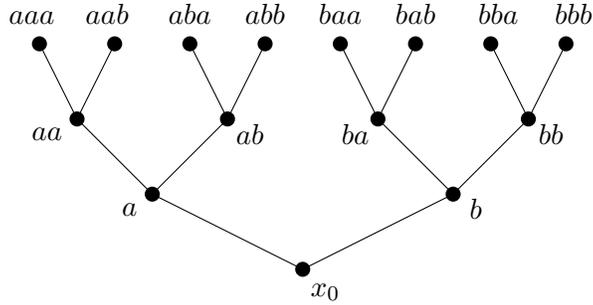
\begin{figure}
\begin{tikzpicture}
\path[draw] (.4,3.5) -- (.9,2.5) -- (1.4,3.5);
\path[draw] (2.4,3.5) -- (2.9,2.5) -- (3.4,3.5);
\path[draw] (4.4,3.5) -- (4.9,2.5) -- (5.4,3.5);
\path[draw] (6.4,3.5) -- (6.9,2.5) -- (7.4,3.5);
\path[draw] (.9,2.5) -- (1.9,1.5) -- (2.9,2.5);
\path[draw] (4.9,2.5) -- (5.9,1.5) -- (6.9,2.5);
\path[draw] (1.9,1.5) -- (3.9,0.5) -- (5.9,1.5);
\path[fill] (.4,3.5) circle (0.1);
\path[fill] (1.4,3.5) circle (0.1);
\path[fill] (2.4,3.5) circle (0.1);
\path[fill] (3.4,3.5) circle (0.1);
\path[fill] (4.4,3.5) circle (0.1);
\path[fill] (5.4,3.5) circle (0.1);
\path[fill] (6.4,3.5) circle (0.1);
\path[fill] (7.4,3.5) circle (0.1);
\path[fill] (0.9,2.5) circle (0.1);
\path[fill] (2.9,2.5) circle (0.1);
\path[fill] (4.9,2.5) circle (0.1);
\path[fill] (6.9,2.5) circle (0.1);
\path[fill] (1.9,1.5) circle (0.1);
\path[fill] (5.9,1.5) circle (0.1);
\path[fill] (3.9,0.5) circle (0.1);
\node (a) at (1.6,1.3) {$a$};
\node (b) at (6.2,1.3) {$b$};
\node (aa) at (0.5,2.3) {$aa$};
\node (ab) at (3.2,2.3) {$ab$};
\node (ba) at (4.6,2.3) {$ba$};
\node (bb) at (7.2,2.3) {$bb$};
\node (aaa) at (0.3,3.85) {$aaa$};
\node (aab) at (1.3,3.9) {$aab$};
\node (aba) at (2.4,3.9) {$aba$};
\node (abb) at (3.4,3.9) {$abb$};
\node (baa) at (4.4,3.9) {$baa$};
\node (bab) at (5.4,3.9) {$bab$};
\node (bba) at (6.5,3.9) {$bba$};
\node (bbb) at (7.5,3.9) {$bbb$};
\node (x0) at (4.2,0.2) {$x_0$};
\end{tikzpicture}
\caption{A labeling of $T_3$}
\label{fig:treelabel}
\end{figure}

We usually consider the nodes of $T_\infty$ as corresponding to the backward orbit
$\Orb_f^-(x_0)\in\Kbar$ of $x_0\in K$ under $f(z)=z^2+c\in K[z]$. Thus, we will often
conflate a point $y\in f^{-n}(x_0)$ with the corresponding node $y$ of the tree.
Having assigned a labeling to the tree, we will also sometimes conflate a node $y$
with its label.
On the other hand, when further clarity is needed for the backward orbit $\Orb_f^-(x_0)\in\Kbar$,
viewed as a tree of preimages, we will often write
the value $y\in f^{-n}(x_0)\subseteq\Kbar$ corresponding to the node
with label $w\in\{a,b\}^n$ as $y=[w]$.

Having labeled the tree, any tree automorphism
$\sigma\in\Aut(T_\infty)$ or $\sigma\in\Aut(T_n)$
must satisfy the following conditions.
\begin{enumerate}
\item For every level $m\geq 0$ (up to $m\leq n$ for $T_n$),
$\sigma$ permutes the labels in $\{a,b\}^m$, and
\item For every level $m\geq 0$ (up to $m\leq n-1$ for $T_n$),
for each word $s_1\ldots s_m\in \{a,b\}^m$, we have either
\[ \sigma(s_1 \cdots s_m a) = \sigma(s_1 \cdots s_m) a
\quad\text{and}\quad \sigma(s_1 \cdots s_m b) = \sigma(s_1 \cdots s_m) b \]
or
\[ \sigma(s_1 \cdots s_m a) = \sigma(s_1 \cdots s_m )b
\quad\text{and}\quad \sigma(s_1 \cdots s_m) b = \sigma(s_1 \cdots s_m) a . \]
\end{enumerate}
For any tree automorphism $\sigma$ and $m$-tuple $x\in\{a,b\}^m$, we define 
the \emph{parity} $\Par(\sigma,x)$ of $\sigma$ at $x$ to be
\[ \Par(\sigma,x) := \begin{cases}
0 & \text{ if } \sigma(xa)=\sigma(x)a \text{ and } \sigma(xb)=\sigma(x)b
\\
1 & \text{ if } \sigma(xa)=\sigma(x)b \text{ and } \sigma(xb)=\sigma(x)a
\end{cases}. \]
Thus, any set of choices of $\Par(\sigma,x)$ for each node
$x$ of $T_{\infty}$ (respectively, $T_{n-1}$) determines a
unique automorphism $\sigma\in \Aut(T_{\infty})$ (respectively, $\sigma\in\Aut(T_n)$).

Note that if $\sigma(x)=x$, then $\Par(\sigma,x)$ is $0$ if $\sigma$ fixes
the two nodes above $x$, or $1$ if it transposes them. However, $\Par(\sigma,x)$
is defined even if $\sigma(x)\neq x$, although in that case its value also depends on the
labeling of the tree.

We further define $\sgn(\sigma,x)=(-1)^{\Par(\sigma,x)}$.
We have the following elementary relations:
\begin{equation}
\label{eq:sgn1}
\sgn(\sigma \tau, x) = \sgn\big(\sigma, \tau(x) \big) \cdot \sgn(\tau, x) ,
\end{equation}
and hence
\begin{equation}
\label{eq:sgn2}
\Par(\sigma\tau,x) = \Par\big(\sigma, \tau(x)\big) + \sgn\big(\sigma,\tau(x)\big) \Par(\tau,x) .
\end{equation}
Equation~\eqref{eq:sgn2} follows from equation~\eqref{eq:sgn1} by writing
$\Par(\cdot,\cdot)=(1-\sgn(\cdot,\cdot))/2$, or simply by checking the four
possible choices of $\Par(\tau,x)$ and $\Par(\sigma,\tau(x))$.

In particular, working modulo $2$, we have
\begin{equation}
	\label{eq:sgn22}
	\Par(\sigma\tau,x) \equiv \Par\big(\sigma, \tau(x)\big) + \Par(\tau,x)  \pmod{2} .
\end{equation}

\begin{defin}
\label{def:MRSiroot}
Fix a labeling of $T_{\infty}$, and fix integers $r>s\geq 1$.
For any word $x$ in the symbols $\{a,b\}$ and any $\sigma\in\Aut(T_{\infty})$,
define
\[ P^a_{r,s}(\sigma,x) := \sum_{w\in\{a,b\}^{r-1}} \Par(\sigma,xaw)
+ \sum_{w'\in\{a,b\}^{s-1}} \Par(\sigma,xbw') \in\ZZ/2\ZZ \]
and
\[ P^b_{r,s}(\sigma,x) := \sum_{w\in\{a,b\}^{r-1}} \Par(\sigma,xbw)
+ \sum_{w'\in\{a,b\}^{s-1}} \Par(\sigma,xaw')  \in\ZZ/2\ZZ .\]
Define $M_{r,s,\infty}$ to be the set of all $\sigma\in\Aut(T_{\infty})$ for which
\begin{equation}
\label{eq:PRSdef}
P^a_{r,s}(\sigma,x_1) = P^b_{r,s}(\sigma,x_1) =
P^a_{r,s}(\sigma,x_2) = P^b_{r,s}(\sigma,x_2) \in\ZZ/2\ZZ
\end{equation}
for all nodes $x_1,x_2$ of $T_{\infty}$.
For $\sigma\in M_{r,s,\infty}$, define $P_{r,s}(\sigma)\in\ZZ/2\ZZ$ to be this common value.
Define $B_{r,s,\infty} := \{\sigma\in M_{r,s,\infty} : P_{r,s}(\sigma)=0 \}$.
\end{defin}

For  $0\leq m\leq n\leq \infty$, it will be convenient to define homomorphisms
\[ \res_{n,m}:\Aut(T_n)\rightarrow \Aut(T_m) \]
given by restricting elements of $\Aut(T_n)$ to the $m$-th level of the tree.
In particular, for each integer $n\geq 1$, we may
define $B_{r,s,n}:=\res_{\infty,n}(B_{r,s,\infty})$ and $M_{r,s,n}:=\res_{\infty,n}(M_{r,s,\infty})$.


\subsection{Main result}
\label{ssec:mainthm}
We can now state our main theorem;
for a more general version, see Theorem~\ref{thm:mainthm}.

\begin{thm}
\label{thm:condition}
Let $K$ be a field of characteristic not equal to $2$,
and let $f(z)=z^2+c\in K[z]$ such that $0$ is strictly preperiodic under $f$.
Let $r>s\geq 1$ be  minimal so that $f^r(0)=-f^s(0)$, and assume $r\ge 4$ and $s\ge 2$.  

Let $x_0\in K$, and define $K_{x_0,n}=K(f^{-n}(0))$,
$K_{x_0,\infty}=\bigcup_{n=1}^\infty K_{x_0,n}$,
$G_{x_0,n}=\Gal(K_{x_0,n}/K)$, and $G_{x_0,\infty}=\Gal(K_{x_0,\infty}/K)$.
In addition, define $D_1,\dots, D_r \in K$ by 
\[ D_i:=\begin{cases}
	x_0-c & \text{ if } i=1,\\
	f^i(0)-x_0 & \text{ if } i\geq 2.
\end{cases} \] 
Then the following are equivalent.
\begin{enumerate}
	\item $[k(\zeta_4, \sqrt{D_1}, \dots, \sqrt{D_r}):k]= 2^{r+1}.$
	\item $[K_{x_0,r+1}:K]=|M_{r,s,r+1}|$.
	\item $G_{x_0,r+1}=M_{r,s,r+1}$.
	\item $G_{x_0,n}=M_{r,s,n}$ for all $n\geq 1$.
	\item $G_{x_0,\infty}\cong M_{r,s,\infty}$.
\end{enumerate}
\end{thm}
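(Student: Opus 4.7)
The plan is to prove the cycle $(5) \Rightarrow (4) \Rightarrow (3) \Rightarrow (2) \Rightarrow (1)$ together with its reverse $(1) \Rightarrow (2) \Rightarrow (3) \Rightarrow (4) \Rightarrow (5)$. Of these, only the equivalence $(1) \Leftrightarrow (2)$ and the inductive lifting $(3) \Rightarrow (4)$ carry real content; the remaining implications are formal.

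The formal steps are $(5) \Rightarrow (4)$ by restriction $\res_{\infty,n}$, $(4) \Rightarrow (3)$ by specialization to $n = r+1$, $(3) \Rightarrow (2)$ by $|G_{x_0,r+1}| = [K_{x_0,r+1}:K]$, and $(4) \Rightarrow (5)$ by the inverse limit $M_{r,s,\infty} = \varprojlim_n M_{r,s,n}$ (the parity conditions defining $M_{r,s,\infty}$ are closed, so a compatible sequence in the $M_{r,s,n}$ lifts uniquely). For $(2) \Rightarrow (3)$ I would use the containment $G_{x_0,r+1} \subseteq M_{r,s,r+1}$ from Theorem~\ref{thm:MRSiroot}, which promotes equality of orders to equality of groups.

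For $(1) \Leftrightarrow (2)$ the plan is a Kummer-theoretic comparison of degrees. I would first compute $|M_{r,s,r+1}|$ from Definition~\ref{def:MRSiroot}: beyond the $2^{r+1}-1$ parities defining $\Aut(T_{r+1})$, the constraints \eqref{eq:PRSdef} impose exactly $r+1$ independent linear $\FF_2$-relations. On the arithmetic side, $K_{x_0,r+1}$ contains $K(\zeta_4, \sqrt{D_1}, \dots, \sqrt{D_r})$: the root $\sqrt{D_1}$ is itself a first-level preimage, while for $i \geq 2$ the square class of $D_i$ arises from the discriminant of the intermediate factor of $f^i(z) - x_0$ via the identity $f(z) - f(w) = (z-w)(z+w)$, and $\zeta_4 = \sqrt{-1}$ enters through the transposition data at level two. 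Thus $[K_{x_0,r+1}:K] = |M_{r,s,r+1}|$ holds exactly when the multiquadratic tower reaches its maximal degree $2^{r+1}$, which is condition (1).

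The main obstacle is $(3) \Rightarrow (4)$, propagating equality from level $r+1$ to all higher levels. I would argue by induction on $n \geq r+1$: assuming $G_{x_0,n} = M_{r,s,n}$, I deduce $G_{x_0,n+1} = M_{r,s,n+1}$. Since $G_{x_0,n+1}$ surjects onto $G_{x_0,n}$, it is enough to show that the kernel of $\res_{n+1,n}\big|_{M_{r,s,n+1}}$ is contained in the image of $\Gal(K_{x_0,n+1}/K_{x_0,n})$. Via Kummer theory, the latter is identified with the $\FF_2$-span of the classes of $(y - c)$ in $K_{x_0,n}^* / (K_{x_0,n}^*)^2$ as $y$ runs over the $2^n$ nodes at level $n$, while the former is cut out by the vanishing of $P^a_{r,s}$ and $P^b_{r,s}$. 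The heart of the argument is to show that the only multiplicative relations among these square classes are those forced by $P^a_{r,s}$ and $P^b_{r,s}$ themselves. Using the same factorization identity, products of $(y-c)$ over fibers telescope down the tree to the critical-orbit values $f^i(0) - x_0$, so the relations reduce to the base-case independence provided by condition (1). I expect the bulk of the technical work to be careful bookkeeping of these Kummer classes at each level; the hypotheses $r \geq 4$ and $s \geq 2$ enter here to rule out accidental coincidences, the excluded $(r,s) = (3,2)$ case being the delicate one alluded to in the introduction and handled separately along the lines of Pink's analysis in \cite{PinkPCF}.
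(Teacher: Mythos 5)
Your decomposition of the logical cycle is different from the paper's. You route the content through $(1)\Leftrightarrow(2)$ and $(3)\Rightarrow(4)$; the paper instead proves $(1)\Rightarrow(5)$ and $(3)\Rightarrow(1)$ and takes $(5)\Leftrightarrow(4)\Rightarrow(3)\Leftrightarrow(2)$ as formal. The practical difference is what does the heavy lifting: the paper never proves the ``lift level $r+1$ to all higher levels'' step directly. It gets $(4)$ by first passing to the function-field picture (Lemma~\ref{lem:kinfty}, Theorem~\ref{thm:condMisGarith}, which compare $M_{r,s,\infty}$ to Pink's $G^{\arith}$), and then invoking \cite[Theorem~4.6]{BGJT1}, which is exactly a black-boxed version of your inductive Kummer argument. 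So your plan is essentially to re-derive that external theorem inline. This is a legitimate alternative --- it would yield a more self-contained proof and avoid the detour through $G^{\geom}/G^{\arith}$ --- but it trades a short proof that leans on prior work for a substantially longer one.

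There are two concrete gaps in what you've written. First, the inductive step $(3)\Rightarrow(4)$: you correctly reduce to showing $\ker\bigl(\res_{n+1,n}|_{M_{r,s,n+1}}\bigr)\subseteq\operatorname{im}\Gal(K_{x_0,n+1}/K_{x_0,n})$ via Kummer theory, and you correctly identify Proposition~\ref{prop:key} as the telescoping mechanism, but ``show that the only multiplicative relations among these square classes are those forced by $P^a_{r,s}$ and $P^b_{r,s}$'' is the entire content of the hard direction, and you give no argument. One must also be careful that the independence you need at level $n$ does not reduce solely to condition $(1)$ --- it also requires controlling the constant field extension $k_\infty/k$ at every level, which is exactly the hypothesis \cite[Theorem~4.6]{BGJT1} demands and which the paper supplies via Lemma~\ref{lem:kinfty}. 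Second, for $(2)\Rightarrow(1)$ you need $\zeta_4\notin k(\sqrt{D_1},\dots,\sqrt{D_r})$. You get $[k(\sqrt{D_1},\dots,\sqrt{D_r}):k]=2^r$ from $G_{x_0,r}\cong\Aut(T_r)$ via Lemma~\ref{lem:disc_condition}, but ruling out $\zeta_4$ landing inside that multiquadratic field requires an extra argument. The paper does this by observing that $\zeta_4\in K_{x_0,r+1}$ with $\sigma(\zeta_4)=(-1)^{P_{r,s}(\sigma)}\zeta_4$ and that $P_{r,s}$ is already onto at level $r+1$, so the kernel of $\res_{r+1,r}$ on $M_{r,s,r+1}$ contains elements acting nontrivially on $\zeta_4$; this is incompatible with $\zeta_4$ descending to level $r$. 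Your sketch does not address this point. (Also a small citation slip: the containment $G_{x_0,n}\subseteq M_{r,s,n}$ is Theorem~\ref{thm:PRSembed}, not Theorem~\ref{thm:MRSiroot}.)
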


In Theorem~\ref{thm:mainthm} we address the remaining cases: $s=1$ and $r\ge 3$, and also $(s,r)=(2,3)$.
There are technical subtleties appearing in these other cases, especially when $(s,r)=(2,3)$.


\subsection{Outline of the paper}
In Section~\ref{sec:preliminary} we present
several auxiliary results that we will use in later sections.
We then divide our polynomials $z^2+c$ with strictly preperiodic critical point into three cases:
the \emph{long-tail case}, where $r>s\ge 2$,
the \emph{special long-tail case}, where $(r,s) = (3,2)$,
and the \emph{short-tail, non-Chebyshev case}, where $r\ge 3$ and $s=1$.

In Section~\ref{sec:zeta4longtail}, we treat the long-tail case.
We begin by writing $\sqrt{-1}$ as an explicit combination of
inverse images of a point in Lemma \ref{lem:iroot}.
Then, in Theorem~\ref{thm:PRSembed}, we prove that the image of our iterated Galois group
$G_\infty$ in $\Aut(T_\infty)$ must be contained in $M_{r,s,\infty}$,
by describing its action on $\sqrt{-1}$ in terms of  $P^a_{r,s}$ and  $P^b_{r,s}$.

In Section~\ref{sec:Special}, we deal with the special long-tail case
$(r,s)=(3,2)$, which requires additional attention.  In Lemma~\ref{lem:root2special},
we show that it is also possible to write
$\sqrt{2}$ as an explicit combination of inverse images of a point.
(Note that, for a general root point $x_0$,
it is not possible to do this for $r>s\ge 2$ when
$(r,s) \not= (3,2)$.)  We further introduce a new function $R_{3,2}$ in
Definition~\ref{def:R2root} and use it to define a special subgroup
$\tilde{M}_{3,2,\infty}$ of $M_{3,2,\infty}$. Then, in Theorem~\ref{thm:R32embed},
we show that the image of
$G_\infty$ in $\Aut(T_\infty)$ must be contained in
$\tilde{M}_{3,2,\infty}$, by describing its action on $\sqrt{2}$ in terms of $R_{3,2}$.

In Section~\ref{sec:shorttail}, we treat the short-tail,
non-Chebyshev case.  Following the pattern of the previous two
sections, we write $\sqrt{2}$ as an explicit combination of inverse
images of a point (in Lemma~\ref{lem:root2}), define a subgroup
$\tilde{M}_{r,1,\infty}$ of $B_{r,1,\infty}$ in terms of a function
$R_{r,1}$ (in Definition~\ref{def:MRroot2}), and then show that the
image of our iterated Galois group $G_\infty$ in $\Aut(T_\infty)$ must
be contained in $\tilde{M}_{r,1,\infty}$ (in Theorem~\ref{thm:PR1embed}).

Next, we relate our groups to Pink's in
Section~\ref{sec:PinkGroupsGeom}, by first showing that Pink's
generators for $G_{r,s,\infty}^{\Pink}$ lie in $B_{r,s,\infty}$ in the
long-tail case, and in corresponding groups ${\tilde B}_{r,s,\infty}$
when $s=1$ or when $(r,s) = (3,2)$. We then 
prove that Pink's group $G_{r,s,\infty}^{\Pink}$ in fact
coincides with $B_{r,s,\infty}$ or ${\tilde B}_{r,s,\infty}$
by comparing their rates of growth.

Finally, we derive explicit conditions for the arboreal Galois groups
to be of maximal size in Section~\ref{sec:obtain}.
The full version of our main result Theorem~\ref{thm:condition}
appears here as Theorem \ref{thm:mainthm}.


\section{Preliminary results}
\label{sec:preliminary}



\subsection{Iterated preimages of given point} 

We proved the following elementary result in \cite[Proposition~2.1]{BGJT2}.
We will use it extensively in the current paper, so we restate it here for the convenience of the reader.

\begin{prop}
\label{prop:key}
Let $K$ be a field of characteristic not equal to $2$.
Let $c\in K$, define $f(z)=z^2+c$, let $y\in\overline{K}$, and let $m\geq 1$.
Choose $\alpha_1,\ldots,\alpha_{2^{(m-1)}}\in f^{-m}(y)$ so that
the roots of $f^m(z)=y$, repeated according to multiplicity, are precisely
\begin{equation}
\label{eq:alpharoots}
\pm\alpha_1, \ldots, \pm \alpha_{2^{(m-1)}} .
\end{equation}
Then
\[ \big( \alpha_1 \alpha_2 \cdots \alpha_{2^{(m-1)}} \big)^2 =
\begin{cases}
f^m(0)-y & \text{ if } m\geq 2, \\
y-f(0) & \text{ if } m=1.
\end{cases} \]
\end{prop}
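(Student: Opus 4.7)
The plan is to extract the product $\alpha_1\cdots\alpha_{2^{m-1}}$ from the factorization of $f^m(z)-y$ over $\overline{K}$ by evaluating at $z=0$, which is essentially a Vieta-type computation.

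First I would observe that since $f(z)=z^2+c$ is monic of degree $2$, the iterate $f^m$ is monic of degree $2^m$, and so by the hypothesis on the list of roots (counted with multiplicity),
\[
f^m(z)-y \;=\; \prod_{i=1}^{2^{m-1}}(z-\alpha_i)(z+\alpha_i) \;=\; \prod_{i=1}^{2^{m-1}}(z^2-\alpha_i^2)
\]
as polynomials in $\overline{K}[z]$. Specializing to $z=0$ immediately yields
\[
f^m(0)-y \;=\; (-1)^{2^{m-1}}\bigl(\alpha_1\alpha_2\cdots\alpha_{2^{m-1}}\bigr)^2.
\]

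The two cases in the statement then reduce to a parity check on the exponent $2^{m-1}$: for $m\geq 2$ it is even, so the sign is $+1$ and one recovers $\bigl(\alpha_1\cdots\alpha_{2^{m-1}}\bigr)^2 = f^m(0)-y$, whereas for $m=1$ the exponent equals $1$, giving instead $\alpha_1^2 = y-f(0)$. There is no genuine obstacle here; the only minor point worth noting is that the factorization above remains valid even when some $\alpha_i=0$ (i.e., when the pair $\{\pm\alpha_i\}$ collapses to a double root at $0$), since the corresponding factor $z^2-\alpha_i^2 = z^2$ still carries the correct multiplicity in the product, and the hypothesis that $\ch K\neq 2$ is precisely what guarantees the pairing $y \leftrightarrow \pm\sqrt{y-c}$ of preimages is unambiguous.
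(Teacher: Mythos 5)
Your proof is correct, and it is the standard Vieta-type argument: factor $f^m(z)-y=\prod_i(z^2-\alpha_i^2)$ using that $f^m$ is monic and the roots pair up as $\pm\alpha_i$, evaluate at $z=0$, and note the sign $(-1)^{2^{m-1}}$ is $+1$ for $m\geq 2$ and $-1$ for $m=1$. The present paper does not reprove this statement but simply cites its predecessor \cite[Proposition~2.1]{BGJT2}; the argument there is the same one you give, so there is nothing further to reconcile.
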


Our next proposition will be helpful for describing some subgroups of $\Aut(T_\infty)$.
When applying it, $X$ will be the set of nodes of $T_{\infty}$,
and $\Gamma$ will be a subgroup of $\Aut(T_\infty)$, which acts on $X$.
(Often, we will have $\Gamma=\Aut(T_\infty)$, in fact.) An appropriate function $P$,
as described in the proposition, then allows us to define a subgroup of $\Gamma$.

\begin{prop}
\label{prop:Pgroup}
Let $\Gamma$ be a group with identity element $e$,
and let $H$ be a group with subgroup $H_0$.
Let $X$ be a nonempty set, and let $P:\Gamma\times X \to H$ be a function.
Define
\[ \dsps G:=\{\sigma\in\Gamma : \forall x_1,x_2\in X, \, \, P(\sigma,x_1)=P(\sigma,x_2)\in H_0 \} .\]
Suppose that $e\in G$, and that for all $\sigma\in G$, $\tau\in\Gamma$, and $x\in X$, we have
\begin{equation}
\label{eq:Phomom}
P(\sigma) P(\tau,x) = P(\sigma\tau,x),
\end{equation}
where $P(\sigma):=P(\sigma,x)\in H_0$ is the constant value of $P(\sigma,\cdot)$.
Then $G$ is a subgroup of $\Gamma$, and $P:G\to H_0$ is a group homomorphism.
\end{prop}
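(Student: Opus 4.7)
The plan is to check the two subgroup axioms (closure under product and closure under inverses) for $G$ while simultaneously reading off the homomorphism property of $P\colon G\to H_0$. The only tool available is the hypothesis $P(\sigma)P(\tau,x)=P(\sigma\tau,x)$, which requires the left factor $\sigma$ to lie in $G$. So the argument will be driven by applying this identity in the two natural ways: with both $\sigma$ and $\tau$ in $G$ to handle closure, and with $\sigma\in G$ paired against $\tau=\sigma^{-1}$ to handle inverses.

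First I would handle products. Given $\sigma,\tau\in G$, the hypothesis yields $P(\sigma\tau,x)=P(\sigma)P(\tau,x)$ for every $x\in X$. Since $\tau\in G$, the factor $P(\tau,x)$ equals a fixed element $P(\tau)\in H_0$, independent of $x$. Therefore $P(\sigma\tau,\cdot)$ is constant with value $P(\sigma)P(\tau)$, which lies in $H_0$ because $H_0$ is closed under multiplication. Hence $\sigma\tau\in G$, and at the same time we obtain $P(\sigma\tau)=P(\sigma)P(\tau)$, which is exactly the multiplicativity of $P$ on $G$.

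Next I would dispatch the identity and the inverses. Applying the hypothesis with $\sigma=\tau=e$ (legal since $e\in G$) gives $P(e)=P(e)^2$, so $P(e)=e_H$, the identity of $H$. Now fix $\sigma\in G$ and apply the hypothesis to the pair $(\sigma,\sigma^{-1})$: this gives $P(\sigma)P(\sigma^{-1},x)=P(e,x)=e_H$ for every $x\in X$. Solving, $P(\sigma^{-1},x)=P(\sigma)^{-1}$, which is both constant in $x$ and an element of $H_0$, since $H_0$ is closed under inversion. Therefore $\sigma^{-1}\in G$, with $P(\sigma^{-1})=P(\sigma)^{-1}$. Combined with the previous paragraph, $G$ is a subgroup and $P\colon G\to H_0$ is a homomorphism.

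The main obstacle, such as it is, is purely bookkeeping: the hypothesis is asymmetric (only the left factor is assumed to be in $G$), so one must be careful to apply it in a direction that makes the bracketed factor $P(\tau,x)$ simplify to something constant, and one must verify that the resulting values lie in $H_0$ rather than merely in $H$. Both points dissolve once one notes that $H_0$ is a subgroup, hence closed under the products and inverses that appear. No induction, no case analysis, and no reference to the tree structure is needed; the proposition is a purely formal fact about the function $P$.
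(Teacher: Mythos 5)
Your proof is correct and follows essentially the same route as the paper's: use the hypothesis to show $P(\sigma\tau,\cdot)$ is constant with value $P(\sigma)P(\tau)\in H_0$, then apply it with $\tau=\sigma^{-1}$ to handle inverses. The only cosmetic difference is that you first establish $P(e)=e_H$ via $P(e)^2=P(e)$, which lets you write $P(\sigma^{-1})=P(\sigma)^{-1}$ cleanly, whereas the paper leaves the expression as $P(\sigma)^{-1}P(e,x_1)$ and just notes it lies in $H_0$; both are fine.
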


\begin{proof}
By hypothesis, $G$ is nonempty. Given $\sigma,\tau\in G$ and $x_1,x_2\in X$, we have
\[ P(\sigma\tau, x_1) = P(\sigma) P(\tau,x_1) = P(\sigma) P(\tau,x_2) = P(\sigma\tau, x_2),\]
and this common value lies in $H_0$, since $P(\sigma), P(\tau,x_1) \in H_0$.
Thus, $\sigma\tau\in G$. In addition,
\[ P(\sigma) P(\sigma^{-1}, x_1) = P(\sigma\sigma^{-1}, x_1) = P(e, x_1)
= P(e, x_2)= P(\sigma\sigma^{-1}, x_2) = P(\sigma) P(\sigma^{-1}, x_2) .\]
Multiplying both sides on the left by $P(\sigma)^{-1}\in H_0$, we have
\[ P(\sigma^{-1}, x_1) = P(\sigma^{-1}, x_2) = P(\sigma)^{-1} P(e, x_1) \in H_0, \]
proving that $\sigma^{-1}\in G$.
Thus, $G$ is a subgroup of $\Gamma$; and by equation~\eqref{eq:Phomom},
the map $\sigma\mapsto P(\sigma)$ is a homomorphism.
\end{proof}


\subsection{Ramification in number fields generated by a PCF parameter}

The results in this section concern the fields of definition of maps $f(z)=z^2+c$ with particular
critical orbit structures. We will not use them until Section~\ref{ssec:PinkGroupsArith}.

Write $f_x(z)=z^2+x\in\ZZ[x,z]$.
For any integer $n\geq 1$, observe that $f_x^n(0)\in\ZZ[x]$ is a polynomial in $x$.
For each pair of integers $r>s\geq 0$, define $F_{r,s}(x) = f_x^r(0) + f_x^s(0)$.
The roots of $F_{r,s}\in\ZZ[x]$ are those values of $c\in\Qbar$ for which $f_c^r(0)=-f_c^s(0)$.

\begin{lemma}
\label{lem:mod2iter}
For any integers $m,n\geq 1$, we have
\[ f_x^n(0) \equiv \sum_{i=0}^{n-1} x^{2^i} \pmod{2\ZZ[x]} . \]
\end{lemma}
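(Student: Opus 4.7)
The plan is to prove this by induction on $n$, exploiting the mod-$2$ Frobenius (the ``freshman's dream'') to control the squaring step. The variable $m$ appears to be a typo in the statement, since only $n$ enters the formula; I would proceed by induction on $n$ alone.

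For the base case $n=1$, I would simply observe $f_x^1(0) = x = x^{2^0}$, which matches the right-hand side.

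For the inductive step, assume $f_x^n(0) \equiv \sum_{i=0}^{n-1} x^{2^i} \pmod{2\ZZ[x]}$. Since $f_x^{n+1}(0) = (f_x^n(0))^2 + x$, I would square the inductive hypothesis modulo $2$. The key point is that in $\FF_2[x]$ the Frobenius is a ring homomorphism, so
\[
\left(\sum_{i=0}^{n-1} x^{2^i}\right)^2 \equiv \sum_{i=0}^{n-1} x^{2^{i+1}} = \sum_{j=1}^{n} x^{2^j} \pmod{2\ZZ[x]}.
\]
Adding $x = x^{2^0}$ then yields $f_x^{n+1}(0) \equiv \sum_{i=0}^{n} x^{2^i} \pmod{2\ZZ[x]}$, completing the induction.

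There is no real obstacle here; the only thing to be careful about is justifying the vanishing of cross terms after squaring, which is immediate from the identity $(a+b)^2 \equiv a^2 + b^2 \pmod{2}$ in $\ZZ[x]$ (applied inductively, or equivalently noting that the binomial coefficients $\binom{k}{j}$ for $0 < j < k$ are not all even, but the mod-$2$ Frobenius on $\FF_2[x]$ handles arbitrary sums in one stroke).
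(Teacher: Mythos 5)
Your proof is correct and follows essentially the same inductive argument as the paper: base case $n=1$ gives $f_x(0)=x$, and the inductive step squares mod $2$ using the Frobenius identity. You are also right that $m$ is an unused variable in the statement (a typo); the paper's proof likewise inducts only on $n$.
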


\begin{proof}
We proceed by induction on $n$.
For $n=1$, we have $f_x(0)=x$ as desired.
Assuming it holds for $n$, then modulo~2, we have
\[ f_x^{n+1}(0) = \big(f_x^n(0))^2 + x  \equiv 
x+\sum_{i=0}^{n-1} \Big(x^{2^i}\Big)^2 = \sum_{i=0}^{n} x^{2^i}. \qedhere \]
\end{proof}

\begin{lemma}
\label{lem:valFrs}
Let $r>s\geq 1$ be integers.
Let $c\in\Qbar$ be a root of the polynomial $F_{r,s}(x)=f_x^r(0)+f_x^s(0)$
such that the $r+1$ iterates $\{f_c^i(0) \, | \, 0\leq i\leq r\}$ are all distinct.
Let $v_2$ be the valuation on $\QQ_2(c)$ normalized so that $v_2(2)=1$, and let $n:=r-s$.
Then:
\begin{enumerate}
\item
For every $i\geq 1$, we have
$\dsps v_2(f_c^{i}(0))=\begin{cases}
2^{-s} & \text{ if } n\nmid s \text{ and } n|i, \\
2^{-s}-1 & \text{ if } n|s \text{ and } n|i, \\
0 & \text{ if } n\nmid i.
\end{cases}$
\item
If $n\nmid s$, then the ramification index $e(\QQ_2(c)/\QQ_2)$ is exactly $2^s$.
\item
If $n|s$, then the ramification index $e(\QQ_2(c)/\QQ_2)$ is exactly $2^s-1$.
\end{enumerate}
\end{lemma}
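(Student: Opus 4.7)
The plan is to deduce the ramification statements (2) and (3) from the valuation formula in (1), and to prove (1) by showing that the iterates $y_i := f_c^i(0)$ become zero modulo the prime $\fp$ of $\QQ_2(c)$ above $2$ exactly when $n \mid i$, then pinning down the exact valuation at those multiples of $n$ via a telescoping identity.

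First I would work modulo $\fp$ in the residue field, which has characteristic $2$. The relation $y_r + y_s = 0$ gives $y_{r-1}^2 + y_{s-1}^2 = -2c$, which modulo $2$ forces $\bar y_{r-1} = \bar y_{s-1}$. Iterating one step at a time (each step squares the previous congruence and uses the recurrence) yields $\bar y_{r-k} = \bar y_{s-k}$ for $0 \le k \le s$, so in particular $\bar y_n = \bar y_0 = 0$; the Frobenius iteration $\bar y_{i+1} = \bar y_i^2 + \bar c$ then propagates this to $\bar y_{i+n} = \bar y_i$ for every $i \ge 0$. To conclude that the reduced sequence has period \emph{exactly} $n$ (i.e.\ $\bar y_j \ne 0$ for $1 \le j \le n-1$), I would combine the distinctness hypothesis with the minimality of $(r,s)$, arguing by contradiction that a strictly smaller period $n' \mid n$ would force the distinct iterates $y_{s+1}$ and $y_{s+1+n'}$ to agree modulo $\fp$ in a way that clashes with the telescoping identities below. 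This minimality-of-period step is where I expect the main subtlety.

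Second, for the indices with $n \mid i$, I would iterate the factorisation $y_{i+1} - y_{j+1} = (y_i - y_j)(y_i + y_j)$ backwards starting from $y_r - y_s = -2\, y_s$. The iteration telescopes to the key identity
\[
y_n^2 \cdot \prod_{j=1}^{s-1}(y_{n+j} + y_j) \;=\; -2\, y_s .
\]
Writing $y_{n+j} + y_j = 2 y_j + (y_{n+j} - y_j)$, each factor's $\fp$-valuation on the left-hand side is computable by a straightforward induction using the recurrence $y_{i+1} = y_i^2 + c$ together with the already established pattern $v_2(y_j) = 0$ for $n \nmid j$. Matching valuations on the two sides reduces to a single linear equation in $v_2(y_n)$ whose unique solution is the value stated in the lemma; the two cases ($n \nmid s$ versus $n \mid s$) split precisely because in the second case the factor $y_s$ on the right-hand side itself has valuation $v_2(y_n)$, shifting the balance by one power. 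A similar bookkeeping then shows $v_2(y_{kn}) = v_2(y_n)$ for every $k \ge 1$.

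Parts (2) and (3) follow at once: the denominator of $v_2(y_n)$ in lowest terms forces $e(\QQ_2(c)/\QQ_2)$ to be divisible by $2^s$ (respectively $2^s - 1$), while the matching upper bound comes from the Newton polygon of $F_{r,s}(x)$, computed via Lemma~\ref{lem:mod2iter}, which pins down the degree of the primitive factor of $F_{r,s}$ containing $c$ and forces $\QQ_2(c)/\QQ_2$ to be totally ramified of exactly the prescribed degree. The hardest parts of the argument will be the minimality-of-period step in the first paragraph above and the careful valuation bookkeeping in the telescoping identity, both of which need to be made uniform across the full range $r > s \ge 1$.
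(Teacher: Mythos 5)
Your proposal takes a genuinely different route for part (1): the paper simply cites Theorem~1.3 of Goksel, whereas you attempt a direct proof via the telescoping identity
\[
-2\,y_s \;=\; y_n^2 \prod_{j=1}^{s-1}\bigl(y_{n+j}+y_j\bigr),
\]
which is correct and is a nice observation. The valuation bookkeeping you sketch does in fact close up into a single linear equation $2^s v = 1$ (when $n\nmid s$) or $2^s v = 1+v$ (when $n\mid s$), giving $v_2(y_n)=2^{-s}$ or $v_2(y_n)=\frac{1}{2^s-1}$ respectively. (Note that the latter is what is actually needed for the paper's deduction of part~(3), since $(2^s-1)\mid e$ requires the denominator of $v_2(y_n)$ to be $2^s-1$; the displayed $2^{-s}-1$ in the lemma statement appears to be a typographical error.)

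However, there is a genuine gap in the ``minimality of period'' step, and it is load-bearing. You correctly show $\bar y_n=0$ in the residue field, but you also need $\bar y_j\neq 0$ for $1\le j\le n-1$ (equivalently, that the reduced critical orbit has \emph{exact} period $n$, and in particular $\bar c\neq 0$) before you can even start the valuation bookkeeping, since that bookkeeping relies on $v_2(y_j)=0$ for $n\nmid j$. Your proposed contradiction --- that a smaller period $n'\mid n$ would force two \emph{distinct} iterates to agree modulo $\fp$ --- is not a contradiction at all: distinct algebraic numbers routinely coincide modulo a prime, and nothing in the telescoping identity rules this out. Ruling out a smaller reduced period seems to require using that $c$ is a root of the Misiurewicz factor of $F_{r,s}$ (not merely some root of $F_{r,s}$, which also vanishes at parameters of other critical orbit types) together with finer $2$-adic information; this is precisely the content that the paper outsources to Goksel, and your sketch does not supply a replacement for it.

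For parts (2) and (3) your strategy matches the paper's (combine the divisibility from part~(1) with the upper bound $e\le 2^s$ coming from $F_{r,s}\equiv g(x)^{2^s}\pmod 2$ with $g$ separable, via Lemma~\ref{lem:mod2iter}). But your phrasing overstates what the mod-$2$ factorization yields: it ``forces'' only $e\le 2^s$, not that $\QQ_2(c)/\QQ_2$ is totally ramified of the prescribed degree --- a nontrivial residue extension is possible, and in case~(3) the extension has $e=2^s-1$, which a Newton polygon of a degree-$2^s$ Eisenstein-type factor could not literally give. The correct inference, as in the paper, is simply $e\le 2^s$ together with $2^s\mid e$ (resp.\ $(2^s-1)\mid e$) from part~(1), plus the elementary remark that $2(2^s-1)>2^s$ for $s\ge 2$.
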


\begin{proof}
Since the $r+1$ listed iterates are distinct, $c$ must be a root of the relevant
Misiurewicz polynomial; see Section~1 of \cite{Gok20} or equation~(1) of \cite{BG23}.
Statement~(1) is therefore the content
of Theorem~1.3 of \cite{Gok20} with $i=n=r-s$.

Define $\dsps g(x):=x^{2^{n-1}} + x^{2^{n-2}} + \cdots + x$.
Then Lemma~\ref{lem:mod2iter} gives us
\begin{equation}
\label{eq:Ppower2s}
F_{r,s}(x) \equiv \sum_{i=s}^{r-1} x^{2^i}  \equiv \big( g(x) \big)^{2^s} \pmod{2}.
\end{equation}
Since $g'(x)\equiv 1 \pmod{2}$, we have that $g(x)$ is separable modulo~2.
Therefore, it follows from equation~\eqref{eq:Ppower2s}
that the desired ramification index satisfies $e(\QQ_2(c)/\QQ_2)\leq 2^s$.

If $n\nmid s$, then statement~(1) of this lemma says that
$v_2(f_c^{n}(0))=2^{-s}$, and hence $e(\QQ_2(c)/\QQ_2)$ is divisible by $2^s$.
Since we just showed that $e(\QQ_2(c)/\QQ_2)\leq 2^s$, we must have
$e(\QQ_2(c)/\QQ_2)=2^s$, proving statement~(2).

If $n|s$ and $s\geq 2$, then statement~(1) of this lemma shows that
$e(\QQ_2(c)/\QQ_2)$ is divisible by $2^s - 1$.
Combined with the above fact $e(\QQ_2(c)/\QQ_2)\leq 2^s$, we have
$e(\QQ_2(c)/\QQ_2)=2^s-1$, proving statement~(3) when $s\geq 2$.

Finally, if $n|s$ and $s=1$, then we must have $r=2$, so that
$F_{r,s}(x)=x^2+2x=x(x+2)$, and hence $c\in\{0,-2\}$.
(Actually, the case $c=0$ is excluded by the
assumption that the first $r+1$ iterates $f_c^i(0)$ are distinct.)
Thus, $\QQ_2(c)=\QQ_2$, so that 
$e(\QQ_2(c)/\QQ_2)=1 = 2^s-1$, completing the proof of statement~(3).
\end{proof}

For the case $s=1$, we have the following result.

\begin{prop}
\label{prop:root2notQc}
Let $r\geq 3$, and let $c\in\Qbar$ be a root of the polynomial $F_{r,1}(x)=f_x^r(0)+x$.
Then $\sqrt{2}\not\in\QQ(c)$.
\end{prop}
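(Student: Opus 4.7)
The plan is to assume $\sqrt{2}\in\mathbb{Q}(c)$ and derive a contradiction from the $2$-adic structure of $c$. The degenerate roots $c\in\{0,-2\}$ give $\mathbb{Q}(c)=\mathbb{Q}$, so the statement is trivial there. For any other root, induction on $r$ reduces to the case where the $r+1$ iterates $\{f_c^i(0):0\leq i\leq r\}$ are all distinct: if not, $c$ is a root of $F_{r',1}$ for some $3\leq r'<r$, and the proposition for $r'$ applies. In this generic case Lemma~\ref{lem:valFrs} yields $e(\mathbb{Q}_2(c)/\mathbb{Q}_2)=2$ and $v_2(c)=0$, so $c$ is a unit while $\alpha_{r-1}:=f_c^{r-1}(0)$ is a uniformizer satisfying $\alpha_{r-1}^2=-2c$ (from $f_c^r(0)=-c$).

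Let $K_0$ denote the maximal unramified subextension of $\mathbb{Q}_2(c)/\mathbb{Q}_2$. Since $\sqrt{2}$ is totally ramified over $\mathbb{Q}_2$ (hence over $K_0$) and $[\mathbb{Q}_2(c):K_0]=2$, the assumption $\sqrt{2}\in\mathbb{Q}_2(c)$ forces $\mathbb{Q}_2(c)=K_0(\sqrt{2})$. I then write $c=c_A+c_B\sqrt{2}$ and $\alpha_{r-1}=a+b\sqrt{2}$ with $c_A,c_B,a,b\in K_0$. Matching valuations in these decompositions gives $c_A,b\in\mathcal{O}_{K_0}^*$ and $v_2(a)\geq 1$, and expanding $\alpha_{r-1}^2=-2c$ yields $ab=-c_B$, hence $v_2(c_B)\geq 1$.

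Expanding $0=F_{r,1}(c)$ as a Taylor series around $c_A\in K_0$ using $c-c_A=c_B\sqrt{2}$ and separating the part in $K_0$ from the part in $K_0\cdot\sqrt{2}$, the latter (after dividing by $c_B\neq 0$) reads
\[
F_{r,1}'(c_A)+\tfrac{1}{3}F_{r,1}'''(c_A)\,c_B^2+\tfrac{1}{30}F_{r,1}^{(5)}(c_A)\,c_B^4+\cdots=0.
\]
I would show that $v_2(F_{r,1}'(c_A))=1$ while every subsequent term has $v_2\geq 3$, so the left-hand side has $v_2=1$ and cannot vanish---a contradiction.

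The key input is the refinement $F_{r,1}(x)\equiv g_r(x)^2+2x\pmod 4$, where $g_r(x)=\sum_{i=0}^{r-1}x^{2^i}$. I would prove it by induction on $r$ from the identity $g_{r-1}(x)^2+x=g_r(x)+2Y(x)$ (valid in $\mathbb{Z}[x]$ for an explicit cross-term polynomial $Y$), which gives $(g_{r-1}^2+x)^2\equiv g_r^2\pmod 4$. Differentiating the refinement yields $F_{r,1}'(x)/2\equiv g_r(x)g_r'(x)+1\equiv g_r(x)+1\pmod 2$, using $g_r'(x)\equiv 1\pmod 2$; and since the residue $\bar c$ of $c_A$ is a root of $\bar g_r$ in the residue field (because $\bar F_{r,1}=\bar g_r^2$ by Lemma~\ref{lem:mod2iter}), we conclude $F_{r,1}'(c_A)/2\equiv 1\pmod 2$, i.e.\ $v_2(F_{r,1}'(c_A))=1$. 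For the higher Taylor terms, the elementary bound $v_2(F_{r,1}^{(k)}(c_A))\geq v_2(k!)$ (from $F_{r,1}^{(k)}/k!\in\mathbb{Z}[x]$), combined with $v_2(c_B)\geq 1$, forces valuation at least $3(k-1)/2\geq 3$ for every odd $k\geq 3$. The principal obstacle is establishing the $\pmod 4$ refinement cleanly and being sufficiently precise with the valuation estimates on the higher Taylor terms to be sure they cannot conspire to cancel the dominant first term.
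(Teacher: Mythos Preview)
Your approach is sound but takes a much longer route than the paper. The paper exploits a single identity: with $y:=f_c^{r-1}(0)$ and $a:=f_c^{r-2}(0)$, one has $(y-a\sqrt{2})(y+a\sqrt{2})=y^2-2a^2=-2y$ directly from $F_{r,1}(c)=0$. Since $v_2(y)=1/2$ and $v_2(a)=0$ by Lemma~\ref{lem:valFrs}, this product has valuation $3/2$; a short triangle-inequality case check then forces each factor to have valuation $3/4$, contradicting $e(\mathbb{Q}_2(c)/\mathbb{Q}_2)=2$. Your argument---decomposing $c$ over the maximal unramified subfield $K_0$, proving a $\pmod 4$ refinement of Lemma~\ref{lem:mod2iter}, and running a Taylor-expansion valuation count---reaches the same contradiction but with substantially more machinery. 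The paper's identity sidesteps both the mod-$4$ refinement and the explicit $K_0$-decomposition entirely; on the other hand, your reduction to the case of distinct iterates is more careful than the paper's statement, which tacitly relies on it through Lemma~\ref{lem:valFrs}.

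One indexing slip: your claimed congruence $F_{r,1}(x)\equiv g_r(x)^2+2x\pmod 4$ with $g_r=\sum_{i=0}^{r-1}x^{2^i}$ is false already at $r=3$. The correct version is $F_{r,1}(x)\equiv g_{r-1}(x)^2+2x\pmod 4$, equivalently $f_x^r(0)\equiv g_{r-1}(x)^2+x\pmod 4$; and indeed your inductive step and Lemma~\ref{lem:mod2iter} both produce $\bar F_{r,1}=\bar g_{r-1}^2$, not $\bar g_r^2$. With $g_{r-1}$ in place of $g_r$ throughout, the remainder of your argument---$F_{r,1}'/2\equiv g_{r-1}+1\pmod 2$, $\bar g_{r-1}(\bar c)=0$, hence $v_2(F_{r,1}'(c_A))=1$, and the higher-term bound $v_2\geq 3j\geq 3$ for $j\geq 1$---goes through unchanged.
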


\begin{proof}
We will prove the stronger result that $\sqrt{2}\not\in\QQ_2(c)$.
Let $n:=r-1\geq 2$, and
let $a:=f_c^{n-1}(0)$ and $y:= f_c(a) = f_c^n(0)$, so that $a,y \in\QQ_2(c)$.
By Lemma~\ref{lem:valFrs}.(1), we have $v_2(a)=0$ and $v_2(y)=1/2$.

Because $y^2=f_c(y)-c=f_c^r(0)-c$ and $a^2=f_c(a)-c=y-c$, we have
\[ \big(y-a\sqrt{2}\big)\big(y+a\sqrt{2}\big) = y^2 - 2a^2 = f_c^r(0) - c - 2( y- c ) = F_{r,1}(c) - 2y = -2y ,\]
where the final equality is because $F_{r,1}(c)=0$.
Thus, if $\sqrt{2}\in\QQ_2(c)$, we would have
\begin{equation}
\label{eq:v2product}
v_2\big( y - a\sqrt{2} \big) + v_2\big( y + a\sqrt{2} \big) = 1 + v_2(y) = \frac{3}{2}.
\end{equation}
If $v_2( y - a\sqrt{2}) > v_2( y + a\sqrt{2} )$, then by the triangle equality for valuations, we would have
\[ v_2\big(y-a\sqrt{2}\big) > v_2\big(y+a\sqrt{2}\big) = v_2\big( 2a\sqrt{2} \big) = \frac{3}{2},\]
contradicting equation~\eqref{eq:v2product}.
Similarly, we cannot have $v_2( y - a\sqrt{2}) < v_2( y + a\sqrt{2} )$, either.

Thus, $v_2( y - a\sqrt{2}) = v_2( y + a\sqrt{2} )$, and hence equation~\eqref{eq:v2product}
yields $v_2( y - a\sqrt{2})=3/4$.
Since $y-a\sqrt{2}\in\QQ_2(c)$, it would follow that
the ramification index of $\QQ_2(c)/\QQ_2$ is at least $4$,
contradicting Lemma~\ref{lem:valFrs}.(2), which says that $e(\QQ_2(c)/\QQ_2)=2$.
Therefore, we must have $\sqrt{2}\not\in\QQ_2(c)$.
\end{proof}

For $s\geq 2$, we have the following result.

\begin{prop}
\label{prop:rootsnotQc}
Let $r > s \geq 2$, and let $c\in\Qbar$ be a root of the polynomial $F_{r,s}(x)=f_x^r(0)+f_x^s(0)$.
Then $\sqrt{2},\sqrt{-1},\sqrt{-2}\not\in\QQ(c)$.
\end{prop}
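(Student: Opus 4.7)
The plan is to prove the stronger local statement that $\sqrt{2},\sqrt{-1},\sqrt{-2}\notin\QQ_2(c)$, which implies the claim over $\QQ(c)$. After possibly replacing $(r,s)$ with the minimal pair realizing $f_c^r(0)=-f_c^s(0)$ (so that the iterates $\{f_c^i(0):0\leq i\leq r\}$ are distinct and Lemma~\ref{lem:valFrs} applies), write $n:=r-s$ and $e:=e(\QQ_2(c)/\QQ_2)$.

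\emph{Case 1: $n\mid s$.} Then $e=2^s-1$ is odd (and $\geq 3$, since $s\geq 2$). Each of $\QQ_2(\sqrt{-1}),\QQ_2(\sqrt{2}),\QQ_2(\sqrt{-2})$ is a ramified quadratic extension of $\QQ_2$ with ramification index $2$. If any sat inside $\QQ_2(c)$, multiplicativity of ramification indices in the tower $\QQ_2\subseteq\QQ_2(\sqrt{\alpha})\subseteq\QQ_2(c)$ would force $2\mid e$, contradicting that $e$ is odd. (For $\sqrt{\pm 2}$ one may alternatively note that $v_2(\sqrt{\pm 2})=1/2\notin\frac{1}{2^s-1}\ZZ$.)

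\emph{Case 2: $n\nmid s$.} Then $e=2^s\geq 4$, and the valuation-only argument of Case~1 fails. Here I would adapt the strategy of Proposition~\ref{prop:root2notQc}. Setting $A:=f_c^{r-1}(0)$ and $B:=f_c^{s-1}(0)$, so that $A^2=f_c^r(0)-c=-f_c^s(0)-c$ and $B^2=f_c^s(0)-c$, the relation $F_{r,s}(c)=0$ gives the two key identities
\[ A^2+B^2=-2c \qquad\text{and}\qquad A^2-B^2=-2f_c^s(0). \]
For each $\alpha\in\{-1,2,-2\}$, assuming for contradiction that $\sqrt{\alpha}\in\QQ_2(c)$, an appropriate linear combination of these identities expresses $A^2-\alpha B^2$ (or a similar quantity) in $\QQ_2(c)$, which then factors as $(A-B\sqrt{\alpha})(A+B\sqrt{\alpha})$; combining this factorization with the valuations of $A$, $B$, $c$, $f_c^s(0)$ computed via Lemma~\ref{lem:valFrs}(1) and the precise residue class of $c$ modulo $2$ (controlled by Lemma~\ref{lem:mod2iter} together with the congruence $F_{r,s}(x)\equiv g(x)^{2^s}\pmod{2}$ from the proof of Lemma~\ref{lem:valFrs}), one aims to exhibit an element of $\QQ_2(c)$ whose $2$-adic valuation has denominator strictly exceeding $2^s$ in lowest terms, contradicting $e=2^s$.

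The main obstacle is Case~2. Because $e=2^s\geq 4$ already accommodates $\frac{1}{2}\ZZ$ inside the value group of $\QQ_2(c)$, the clean valuation contradiction of Proposition~\ref{prop:root2notQc} (which relied on the very restrictive $e=2$) does not transfer. The hypothesis $s\geq 2$ is essential here, as the analogous statement \emph{fails} for $s=1$ (for instance $c=i$ is a root of $F_{3,1}$). The contradiction must instead be detected one level deeper, at the level of unit classes modulo higher powers of the maximal ideal, and pinning down the relevant unit requires the explicit $\pmod{2}$ information provided by Lemma~\ref{lem:mod2iter}.
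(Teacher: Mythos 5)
Your Case~1 ($n\mid s$) matches the paper's argument. The observation that $c=i$ is a root of $F_{3,1}$, showing the hypothesis $s\geq 2$ is necessary, is correct and a nice sanity check.

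However, Case~2 ($n\nmid s$) has a genuine gap, and you essentially acknowledge it yourself. With $A:=f_c^{r-1}(0)$ and $B:=f_c^{s-1}(0)$, the factorization $A^2+B^2=(A+B\sqrt{-1})(A-B\sqrt{-1})=-2c$ combined with $v_2(A)=v_2(B)$ and the triangle-equality argument only yields $v_2(A\pm B\sqrt{-1})=1/2$, which lives inside the value group $\frac{1}{2^s}\ZZ$ already, since $s\geq 2$. So this produces no contradiction at all, and your choice of $\alpha$-dependent combinations $A^2-\alpha B^2$ does not obviously fix this: the issue is not which linear combination to take, but that the valuation of whatever quantity you factor must have odd numerator over denominator exactly $2^s$ (so that halving it pushes the denominator past $e$), and $A$, $B$ alone don't produce that.

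The paper's fix is a different choice of elements entering the factorization. Writing $n=r-s$ and choosing $\ell$ with $n\mid(s+\ell)$, it sets $\omega:=f_c^n(0)$, $\alpha:=f_c^\ell(0)$, and then raises these to the power $2^{s-1}$ to get $y:=\omega^{2^{s-1}}$ (with $v_2(y)=1/2$) and $a:=\alpha^{2^{s-1}}$ (a unit). The polynomial $Q(t)=(t-a\gamma)(t-a\tilde\gamma)$ is built with $\gamma\in\{\sqrt{2},\sqrt{-2},1+\sqrt{-1}\}$ (note $1+\sqrt{-1}$, not $\sqrt{-1}$, so that $\gamma\tilde\gamma=2$, which is crucial for the congruence). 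A computation using Lemma~\ref{lem:mod2iter} and $F_{r,s}(c)=0$ pins down $Q(y)\equiv 2f_c^{s+\ell}(0)\pmod{2\omega^2\ZZ[c]}$, hence $v_2(Q(y))=1+2^{-s}$ exactly. Equal splitting of this valuation between the two conjugate factors then forces an element of $\QQ_2(c)$ of valuation $\tfrac12+2^{-s-1}$, whose denominator $2^{s+1}$ exceeds $e=2^s$. Your quantities $A=f_c^{r-1}(0)$, $B=f_c^{s-1}(0)$ appear in the paper as $R$, $S$, but only as auxiliary players in the congruence $y+S\equiv R\pmod 2$; they do not themselves enter the factorization. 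So the idea you gesture at ("detect the contradiction one level deeper via unit classes, using Lemma~\ref{lem:mod2iter}") is directionally right, but the specific construction — powering up $f_c^n(0)$ and $f_c^\ell(0)$ to land the product $Q(y)$ at valuation $1+2^{-s}$ on the nose, and replacing $\sqrt{-1}$ by $1+\sqrt{-1}$ — is the actual content of the proof and is missing from your sketch.
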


\begin{proof}
Let $n:=r-s$. If $n|s$, then by Lemma~\ref{lem:valFrs}.(3), the extension $\QQ_2(c)/\QQ_2$
has odd ramification degree. Therefore, $\QQ_2(c)$ cannot contain any of
$\sqrt{2},\sqrt{-1},\sqrt{-2}$, which each have ramification index~2 over $\QQ_2$.
For the remainder of the proof, then, we assume that $n\nmid s$.

Pick $\ell\geq 1$ such that $n|(s+\ell)$, let $\alpha:=f_c^\ell(0)$, and let $a:=\alpha^{2^{s-1}}$.
Since $n\nmid s$, we also have $n\nmid \ell$, and therefore
$v_2(a)=v_2(\alpha)=0$ by Lemma~\ref{lem:valFrs}.(1).

Let $\omega:=f_c^n(0)$ and $y:=\omega^{2^{s-1}}$, so that
by Lemma~\ref{lem:valFrs}.(1), we have $v_2(\omega)=2^{-s}$ and hence $v_2(y)=1/2$.

Also define $R:=f_c^{r-1}(0)$ and $S:=f_c^{s-1}(0)$. By Lemma~\ref{lem:mod2iter}, we have
\[ y + S \equiv \bigg( \sum_{i=0}^{n-1} c^{2^i} \bigg)^{2^{s-1}} + \sum_{i=0}^{s-2} c^{2^i}
\equiv \sum_{i=s-1}^{r-2} c^{2^i} + \sum_{i=0}^{s-2} c^{2^i} = \sum_{i=0}^{r-2} c^{2^i}
\equiv R \pmod{2\ZZ[c]} , \]
and hence
\[ y^2 + 2yS + S^2 = (y+S)^2 \equiv R^2 \pmod{4\ZZ[c]} .\]
On the other hand, we also have $R^2+S^2 + 2c = F_{r,s}(c)=0$, and therefore
\begin{equation}
\label{eq:ysimp}
y^2 = y^2 - F_{r,s}(c) \equiv \big( R^2 - S^2 - 2yS \big) - \big(R^2 + S^2 + 2c \big)
\equiv 2(c+ S^2 + yS) \pmod{4\ZZ[c]} .
\end{equation}

Let $\gamma$ be any of $\sqrt{2},\sqrt{-2}$, or $1+\sqrt{-1}$.
Let $\tilde{\gamma}:=-\gamma$ if $\gamma=\sqrt{\pm 2}$,
or $\tilde{\gamma}:=2-\gamma$ if $\gamma=1+\sqrt{-1}$. Define $Q(t)\in\ZZ[c][t]$ to be
\[  Q(t) := (t-a\gamma)(t-a\tilde{\gamma}) =
\begin{cases}
t^2 \mp 2a^2 & \text{ if } \gamma=\sqrt{\pm 2}, \\
t^2 - 2at+2a^2 & \text{ if } \gamma=1+\sqrt{-1}.
\end{cases} \]
Noting that $0 < 2v_2(\omega)\leq v_2(y) < 1$, then, and working modulo $2\omega^2\ZZ[c]$, we have
$2y\equiv 0$, and hence
\[ Q(y) \equiv y^2 + 2a^2 \equiv 2(c + S^2 + a^2)
\equiv 2\bigg( c + \sum_{i=1}^{s-1} c^{2^i} + \sum_{i=s}^{s+\ell-1} c^{2^i} \bigg)
\equiv 2 f_c^{s+\ell}(0) \pmod{2\omega^2\ZZ[c]} ,\]
where the second congruence is by equation~\eqref{eq:ysimp},
and the third and fourth are by Lemma~\ref{lem:mod2iter} and the definition of $a$.
Since $n | (s+\ell)$, we have $v_2(f_c^{s+\ell}(0))=2^{-s} = v_2(\omega)$ by Lemma~\ref{lem:valFrs}.(1),
and therefore $v_2(Q(y)) = 1 + 2^{-s}$.

If $\gamma\in\QQ_2(c)$, then we would have
\begin{equation}
\label{eq:v2product2}
v_2\big( y - a\gamma \big) + v_2\big( y - a\tilde{\gamma} \big) = v_2(Q(y)) = 1+2^{-s}.
\end{equation}
If $v_2( y - a\gamma) > v_2( y + a\tilde{\gamma} )$,
then as in the proof of Proposition~\ref{prop:root2notQc}, we would have
\[ v_2\big(y-a\gamma\big) > v_2\big(y-a\tilde{\gamma}\big) = v_2\big( a (\gamma-\tilde{\gamma}) \big)
\geq 1, \]
contradicting equation~\eqref{eq:v2product2}.
Similarly, we cannot have $v_2( y - a\gamma) < v_2( y + a\tilde{\gamma} )$, either.

Thus, $v_2( y - a\gamma) = v_2( y + a\tilde{\gamma} )$, so that equation~\eqref{eq:v2product2}
yields $v_2( y - a\gamma)=1/2 + 2^{-s-1}$.
Since $y-a\gamma\in\QQ_2(c)$, it would follow that
the ramification index of $\QQ_2(c)/\QQ_2$ is at least $2^{s+1}$,
contradicting Lemma~\ref{lem:valFrs}.(2), which says that $e(\QQ_2(c)/\QQ_2)=2^s$.
Therefore, we must have $\gamma\not\in\QQ_2(c)$.
\end{proof}


\section{The long-tail strictly preperiodic case ($r>s\geq 2$)}
\label{sec:zeta4longtail}
Throughout this section, we suppose that $0$ is preperiodic under $f(z)=z^2+c$,
and $r>s\geq 0$ are minimal such that $f^r(0)=-f^s(0)$.
We assume that $s\geq 1$, and usually that $s\geq 2$.

\subsection{A fourth root of unity arising in backward orbits}

\begin{lemma}
\label{lem:iroot}
Let $r>s\geq 2$.
Let $x\in\Kbar$, and let $\pm y$ be its two immediate preimages under $f$.
Let $\pm\alpha_1,\ldots , \pm\alpha_{2^{r-1}}$ be the roots of $f^r(z)=y$,
and let $\pm\beta_1,\ldots , \pm\beta_{2^{s-1}}$ be the roots of $f^s(z)=-y$.
Suppose that $f^s(0)\neq -y$, and define
\[ \gamma := \frac{\alpha_1\cdots \alpha_{2^{r-1}} } {\beta_1 \cdots \beta_{2^{s-1}} } .\]
Then $\gamma^2=-1$.
\end{lemma}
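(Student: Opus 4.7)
The plan is a short direct computation, essentially a two-line application of Proposition~\ref{prop:key} together with the defining relation $f^r(0) = -f^s(0)$.

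First I would apply Proposition~\ref{prop:key} with $m = r$ and target point $y$; since $r > s \geq 2$ places us in the $m \geq 2$ branch, this gives
$(\alpha_1 \cdots \alpha_{2^{r-1}})^2 = f^r(0) - y.$
Next I would apply the same proposition with $m = s$ and target point $-y$; the assumption $s \geq 2$ again puts us in the $m \geq 2$ branch, so we obtain
$(\beta_1 \cdots \beta_{2^{s-1}})^2 = f^s(0) - (-y) = f^s(0) + y.$

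Now I would divide these two identities. The hypothesis $f^s(0) \neq -y$ is precisely what is needed to know the denominator is nonzero, so the quotient is well-defined and equals
$\gamma^2 = \frac{f^r(0) - y}{f^s(0) + y}.$
Substituting the defining relation $f^r(0) = -f^s(0)$ into the numerator turns it into $-f^s(0) - y = -(f^s(0) + y)$, and the quotient collapses to $-1$.

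There is essentially no obstacle here; the content of the lemma is purely algebraic, packaging Proposition~\ref{prop:key} with the relation $f^r(0) = -f^s(0)$. The assumption $s \geq 2$ serves only to ensure that both applications of Proposition~\ref{prop:key} fall in the clean $m \geq 2$ branch (if $s = 1$, the identity for the $\beta$-product would instead involve $-y - f(0)$, breaking the cancellation), and the assumption $f^s(0) \neq -y$ is needed exclusively to make $\gamma$ itself well-defined.
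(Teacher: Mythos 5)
Your proposal is correct and is essentially the same argument as the paper's: both apply Proposition~\ref{prop:key} with $m=r$ at $y$ and with $m=s$ at $-y$, divide, and substitute $f^r(0)=-f^s(0)$. Your extra remarks on the roles of $s\geq 2$ and $f^s(0)\neq -y$ are accurate but not new content.
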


\begin{proof}
By Proposition~\ref{prop:key} and the fact that $f^r(0)=-f^s(0)$, we have
\[ \gamma^2 = \frac{f^r(0) - y}{f^s(0)-(-y)} = \frac{-f^s(0) - y}{f^s(0)+y} =  -1.
\qedhere \]
\end{proof}

Recall from Section~\ref{ssec:label} that given a labeling of the tree of preimages $\Orb_f^-(x_0)$,
then for any word $w\in \{a,b\}^i$, 
we write $[w]$ for the element of $f^{-i}(x_0)\subseteq \Kbar$ that appears as the node
with label $w$ in the $i$-th level of the tree.

\begin{lemma}
\label{lem:pickfour}
Let $r>s\geq 2$.
Let $x_0\in K$ not in the forward orbit of $0$,
and choose a primitive $4$-th root of unity $\zeta_4\in\Kbar$.
It is possible to label the tree $T_{\infty}$
of preimages $\Orb_f^-(x_0)$ in such a way that
for every node $x$ of the tree, we have
\begin{equation}
\label{eq:fourprod}
\frac{\prod_{w\in\{a,b\}^{r-1}} [xawa]}{\prod_{w' \in\{a,b\}^{s-1}} [xbw'a]} =
\frac{\prod_{w\in\{a,b\}^{r-1}} [xbwa]}{\prod_{w' \in\{a,b\}^{s-1}} [xaw'a]} = \zeta_4 .
\end{equation}
\end{lemma}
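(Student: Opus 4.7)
The plan is to construct the labeling of $T_\infty$ inductively on level, enforcing the condition \eqref{eq:fourprod} at each node as soon as every label it depends on has been assigned.

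First, I would observe that for \emph{any} valid labeling of $T_\infty$, the ratio
\[
\rho(x) := \frac{\prod_{w\in\{a,b\}^{r-1}}[xawa]}{\prod_{w'\in\{a,b\}^{s-1}}[xbw'a]}
\]
is already a primitive fourth root of unity. Applying Lemma~\ref{lem:iroot} to the node $[x]$, with $y := [xa]$ (so $-y = [xb]$), identifies $\rho(x)$ with the quantity $\gamma$ of that lemma and yields $\rho(x)^2 = -1$. The side hypothesis $f^s(0)\neq -y$ needed there is automatic: if $[xb] = f^s(0)$, then applying $f^{|x|+1}$ would give $x_0 = f^{|x|+s+1}(0)$, contradicting the assumption that $x_0$ lies outside the forward orbit of $0$. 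Hence $\rho(x) \in \{\pm\zeta_4\}$ always, and the task is to choose the labeling so that $\rho(x) = +\zeta_4$ for every $x$.

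Next, I would proceed level by level. Having labeled every word of length $<n$, I fix once and for all an arbitrary representative $y_p \in f^{-1}([p])$ for each level-$(n-1)$ node $p$; the level-$n$ labels are then specified by signs $s(p)\in\{\pm 1\}$ with $[pa] := s(p)\,y_p$ and $[pb] := -s(p)\,y_p$. The inductive claim at step $n$ is: the signs $\{s(p) : |p|=n-1\}$ can be chosen so that $\rho(x) = \zeta_4$ for every $x$ at level $n-r-1$. The base cases $n\leq r$ are vacuous; for $n\geq r+1$ I would argue as follows. Fix $x$ at level $n-r-1$. The denominator of $\rho(x)$ lies at level $|x|+s+1 = n-(r-s)<n$ and is already determined. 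The numerator lies at level $n$ and factors as
\[
\prod_{w\in\{a,b\}^{r-1}}[xawa] = \Bigl(\prod_{w}s(xaw)\Bigr)\cdot \prod_{w}y_{xaw},
\]
so $\rho(x) = \bigl(\prod_w s(xaw)\bigr)\cdot C_x$ for some $C_x\in\Kbar$ depending only on labels of level $\leq n-1$. The identity $\rho(x)^2=-1$ forces $C_x\in\{\pm\zeta_4\}$, and the condition $\rho(x)=\zeta_4$ reduces to the single equation $\prod_w s(xaw) = \zeta_4/C_x \in\{\pm 1\}$. For distinct $x_1,x_2$ at level $n-r-1$, the sets $\{x_1 aw\}$ and $\{x_2 aw\}$ of level-$(n-1)$ nodes are disjoint, as they have distinct prefixes of length $n-r-1$; thus the step-$n$ constraints decouple. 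Within a single constraint, flipping any one $s(xaw)$ toggles the product, so the prescribed $\pm 1$ value is always attainable; the remaining level-$(n-1)$ signs may be chosen freely.

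The technical subtlety worth flagging is that step-$n$ choices also influence $\rho(x')$ for $x'$ at level $n-s-1$, through the \emph{denominator} factor $[x'bw'a]$ at level $|x'|+s+1 = n$. This does not obstruct the induction: at step $n$ those $\rho(x')$ are not yet finalized, since their numerator lives at level $n+(r-s)>n$ and will only be determined at the later step $n+r-s$; at that step, the sign product $\prod_w s(x'aw)$ remains free to flip either way, and so will absorb whatever value the (now-fixed) denominator took. In other words, the staggering $r>s$ arranges the dependencies so that each $\rho(x)$ is finalized exactly once, at the unique step where a free sign choice suffices to pin it to $\zeta_4$. Iterating across all $n\geq 0$ produces the required labeling.
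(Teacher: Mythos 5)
Your approach is essentially the same as the paper's: an inductive level-by-level construction where, at each new level, you identify the ratio in question with the quantity $\gamma$ of Lemma~\ref{lem:iroot} (so it is $\pm\zeta_4$), and then observe that a single sign flip at an appropriate node above $xa$ toggles that ratio, letting you pin it to $\zeta_4$. Your check that the hypothesis $f^s(0)\neq -y$ of Lemma~\ref{lem:iroot} is automatic, and your accounting of the dependency structure (denominators at level $n-(r-s)<n$, numerators at level $n$, future denominators absorbed at later steps), are all correct.

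However, there is a genuine gap: you only address the \emph{first} of the two ratios in equation~\eqref{eq:fourprod}. You define $\rho(x)$ as the ratio with numerator $\prod_w[xawa]$ and denominator $\prod_{w'}[xbw'a]$, and your constraint at step $n$ only touches the signs $s(xaw)$ for $w\in\{a,b\}^{r-1}$, i.e.\ nodes above $xa$. You then say the remaining level-$(n-1)$ signs may be chosen freely. But the lemma also requires the second ratio, with numerator $\prod_w[xbwa]$ and denominator $\prod_{w'}[xaw'a]$, to equal $\zeta_4$; this constrains the signs $s(xbw)$ above $xb$, which your argument leaves unconstrained, so the second ratio could come out to $-\zeta_4$. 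The fix is easy and exactly parallel: impose the symmetric constraint $\prod_w s(xbw)=\zeta_4/C_x'$ for the second ratio. Since the two constraints at $x$ involve signs in the disjoint subtrees above $xa$ and $xb$ respectively, and for distinct $x_1,x_2$ at level $n-r-1$ all four relevant sign-sets are pairwise disjoint, every constraint decouples and is a single $\pm 1$-product equation, hence satisfiable. You should make this explicit; as written, the proof establishes only half of \eqref{eq:fourprod}.
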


\begin{proof}
We will label the tree inductively, starting from the root point $x_0$.
Label the tree (using the symbols $a$ and $b$) arbitrarily up to level $r$.

For each successive $n\geq r+1$,
suppose that we have labeled $T_{n-1}$ in such a way that
for every node $x$ of $T_{n-1}$ up to level $n-r-2$,
the identity of equation~\eqref{eq:fourprod} holds.
(Note that this condition is vacuously true for $n= r+1$.)
For each node $y$ at level $n-1$, label the two points
of $f^{-1}(y)$ arbitrarily as $ya$ and $yb$. We will now adjust these labels that we have just
applied at the $n$-th level of the tree.

For each node $x$ at level $n-r-1$,
consider the first ratio $\gamma_1$ in equation~\eqref{eq:fourprod}, which is a product
of $2^{r-1}$ nodes sitting $r$ levels above $xa$,
divided by a product of $2^{s-1}$ nodes sitting $s$ levels above $xb$.
By Lemma~\ref{lem:iroot}, we have $\gamma_1^2=-1$,
so $\gamma_1=\pm \zeta_4$.
If $\gamma_1=-\zeta_4$, then exchange the labels of the two
level-$n$ nodes $xaa\ldots aa$ and $xaa\ldots ab$;
otherwise, make no change to the labels of nodes above $xa$.

Similarly, let $\gamma_2$ denote the second ratio in equation~\eqref{eq:fourprod},
which is a product of $2^{r-1}$ nodes sitting $r$ levels above $xb$,
divided by a product of $2^{s-1}$ nodes sitting $s$ levels above $xa$.
Again by Lemma~\ref{lem:iroot}, we have $\gamma_2^2=-1$,
so $\gamma_2=\pm \zeta_4$.
If $\gamma_2=-\zeta_4$, then exchange the labels of the two
level-$n$ nodes $xba\ldots aa$ and $xba\ldots ab$;
otherwise, make no change to the labels of nodes above $xb$.

Having made these (possible) label changes to various nodes at the $n$-th level
of the tree, we have labeled $T_n$ so that for every node $x$
at every level $0\leq \ell \leq n-r-1$ of $T_n$,
the identity of equation~\eqref{eq:fourprod} holds.
Thus, our inductive construction of the desired labeling is complete.
\end{proof}

\subsection{A preliminary result regarding the associated arboreal subgroup}
Lemma~\ref{lem:pickfour} inspires the following result; we state it for the more general case that $r>s\geq 1$, so that
we may apply it in Sections~\ref{sec:Special} and~\ref{sec:shorttail} as well.
Recall the definitions of the sets $M_{r,s,\infty}$ and $B_{r,s,\infty}$
and the function $P_{r,s}$ from Definition~\ref{def:MRSiroot}.

\begin{thm}
\label{thm:MRSiroot}
Fix integers $r>s\geq 1$. Then $M_{r,s,\infty}$ and $B_{r,s,\infty}$ are subgroups
of $\Aut(T_{\infty})$. Moreover, $P_{r,s}:M_{r,s,\infty}\to\ZZ/2\ZZ$
is a homomorphism with kernel $B_{r,s,\infty}$.
\end{thm}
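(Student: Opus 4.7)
The plan is to apply Proposition~\ref{prop:Pgroup} directly. Take $\Gamma := \Aut(T_\infty)$ with $X$ the set of all nodes of $T_\infty$, set $H := (\ZZ/2\ZZ)^2$ (written additively) with diagonal subgroup $H_0 := \{(0,0),(1,1)\}$, and define
\[ P(\sigma,x) := \bigl(P^a_{r,s}(\sigma,x),\,P^b_{r,s}(\sigma,x)\bigr) \in H. \]
Under the identification $H_0\cong\ZZ/2\ZZ$ sending $(k,k)\mapsto k$, the set $G$ produced by the proposition coincides with $M_{r,s,\infty}$ and the constant value $P(\sigma)\in H_0$ with the scalar $P_{r,s}(\sigma)$. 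The identity $e\in\Aut(T_\infty)$ visibly lies in $M_{r,s,\infty}$ since $\Par(e,x)=0$ makes all four quantities in~\eqref{eq:PRSdef} vanish.

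The crux is to verify the identity $P(\sigma)+P(\tau,x) = P(\sigma\tau,x)$ for $\sigma\in M_{r,s,\infty}$, $\tau\in\Aut(T_\infty)$, and $x\in X$. Applying~\eqref{eq:sgn22} term by term expresses $P^a_{r,s}(\sigma\tau,x)$, modulo $2$, as
\[ \sum_{w\in\{a,b\}^{r-1}} \Par\bigl(\sigma,\tau(xaw)\bigr) + \sum_{w'\in\{a,b\}^{s-1}} \Par\bigl(\sigma,\tau(xbw')\bigr) + P^a_{r,s}(\tau,x). \]
Split on $\Par(\tau,x)$. If $\Par(\tau,x)=0$, then $\tau$ maps the $2^{r-1}$ descendants of $xa$ bijectively onto those of $\tau(x)a$ (and similarly for $xb$), so the two sums reorganise to $P^a_{r,s}(\sigma,\tau(x))$; if $\Par(\tau,x)=1$, the $a$- and $b$-branches above $x$ are swapped by $\tau$, and the same reorganisation yields $P^b_{r,s}(\sigma,\tau(x))$ instead. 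Because $\sigma\in M_{r,s,\infty}$, both alternatives equal $P_{r,s}(\sigma)$, so in either case $P^a_{r,s}(\sigma\tau,x)\equiv P_{r,s}(\sigma)+P^a_{r,s}(\tau,x)\pmod{2}$; the symmetric argument for the $b$-component gives the full identity in $H$.

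With these inputs, Proposition~\ref{prop:Pgroup} shows that $M_{r,s,\infty}$ is a subgroup of $\Aut(T_\infty)$ and that $P_{r,s}\colon M_{r,s,\infty}\to\ZZ/2\ZZ$ is a homomorphism, so $B_{r,s,\infty}=\ker P_{r,s}$ is a subgroup. The one step requiring real attention is the branch-swap bookkeeping above: when $\tau$ swaps the branches above $x$, the length-$(r-1)$ sum of $\Par(\sigma,\cdot)$ becomes the $r$-part of $P^b_{r,s}(\sigma,\tau(x))$ rather than its $a$-part, which is precisely why packaging $P^a_{r,s}$ and $P^b_{r,s}$ together into $H$ with the diagonal $H_0$ as target is the natural choice dictated by the problem.
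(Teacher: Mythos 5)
Your proposal is correct and takes essentially the same route as the paper: the same instantiation of Proposition~\ref{prop:Pgroup} with $\Gamma=\Aut(T_\infty)$, $H=(\ZZ/2\ZZ)^2$, $H_0$ the diagonal, and $P=(P^a_{r,s},P^b_{r,s})$, and the same case split on $\Par(\tau,x)$ to verify the required identity $P(\sigma)+P(\tau,x)=P(\sigma\tau,x)$. The only cosmetic difference is that you expand $P^a_{r,s}(\sigma\tau,x)$ via~\eqref{eq:sgn22} and then reorganise the $\sigma$-sums, while the paper starts from $P_{r,s}(\sigma)=P^a_{r,s}(\sigma,\tau(x))$ (or $P^b_{r,s}$) and pushes the labels through; these are the same computation read in opposite directions.
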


\begin{proof}
\textbf{Step 1}. We claim that for all $\sigma\in M_{r,s,\infty}$,
all $\tau\in \Aut(T_\infty)$, and all nodes $x$ of the tree, we have
\begin{equation}
\label{eq:PRSident}
P_{r,s}(\sigma) + P^a_{r,s}(\tau,x) = P^a_{r,s}(\sigma \tau,x)
\quad\text{and}\quad
P_{r,s}(\sigma) + P^b_{r,s}(\tau,x) = P^b_{r,s}(\sigma \tau,x) .
\end{equation}
Indeed, given such $\sigma$, $\tau$, and $x$, suppose first
that $\Par(\tau,x)=0$. Then $\tau(xa)=\tau(x) a$ and $\tau(xb)=\tau(x) b$,
and hence, writing $P_{r,s}(\sigma)$ as $P^a_{r,s}(\sigma,\tau(x))$, we have
\begin{align*}
P_{r,s}(\sigma) &=
\sum_{w\in\{a,b\}^{r-1}} \Par(\sigma,\tau(x)aw)
+ \sum_{w'\in\{a,b\}^{s-1}} \Par(\sigma,\tau(x)bw')
\\
&= 
\sum_{w\in\{a,b\}^{r-1}} \Par(\sigma,\tau(xa)w)
+ \sum_{w'\in\{a,b\}^{s-1}} \Par(\sigma,\tau(xb)w')
\\
&= 
\sum_{w\in\{a,b\}^{r-1}} \Par(\sigma,\tau(xaw))
+ \sum_{w'\in\{a,b\}^{s-1}} \Par(\sigma,\tau(xbw')).
\end{align*}
Therefore, since we are working in $\ZZ/2\ZZ$, equation~\eqref{eq:sgn22} yields
\begin{align}
\label{eq:PRSkey}
P_{r,s}(\sigma) + P^a_{r,s}(\tau,x) &=
\sum_{w\in\{a,b\}^{r-1}} \big[ \Par(\sigma,\tau(xaw)) + \Par(\tau, xaw) \big]
\notag \\
& \qquad\qquad\qquad
+ \sum_{w'\in\{a,b\}^{s-1}} \big[ \Par(\sigma,\tau(xbw')) + \Par(\tau, xbw') \big]
\notag \\
&= \sum_{w\in\{a,b\}^{r-1}} \Par(\sigma\tau, xaw) 
+ \sum_{w'\in\{a,b\}^{s-1}} \Par(\sigma\tau, xbw')
= P^a_{r,s}(\sigma \tau,x),
\end{align}
proving the first identity of equation~\eqref{eq:PRSident}.
The second follows by a similar calculation, writing $P_{r,s}(\sigma)$ as $P^b_{r,s}(\sigma,\tau(x))$.

The only other possibility is that $\Par(\tau,x)=1$, and hence that
$\tau(xa)=\tau(x) b$ and $\tau(xb)=\tau(x) a$.
Therefore, writing $P_{r,s}(\sigma)$ as $P^b_{r,s}(\sigma,\tau(x))$, we have
\begin{align*}
P_{r,s}(\sigma) &=
\sum_{w\in\{a,b\}^{r-1}} \Par(\sigma,\tau(x)bw)
+ \sum_{w'\in\{a,b\}^{s-1}} \Par(\sigma,\tau(x)aw')
\\
&= 
\sum_{w\in\{a,b\}^{r-1}} \Par(\sigma,\tau(xa)w)
+ \sum_{w'\in\{a,b\}^{s-1}} \Par(\sigma,\tau(xb)w')
\\
&= 
\sum_{w\in\{a,b\}^{r-1}} \Par(\sigma,\tau(xaw))
+ \sum_{w'\in\{a,b\}^{s-1}} \Par(\sigma,\tau(xbw')).
\end{align*}
The first identity of equation~\eqref{eq:PRSident} then follows by exactly the calculation
of equation~\eqref{eq:PRSkey}.
The second follows by a similar calculation, writing $P_{r,s}(\sigma)$ as $P^a_{r,s}(\sigma,\tau(x))$,
and proving our claim.

\medskip

\textbf{Step 2}.
In the notation of Proposition~\ref{prop:Pgroup},
let $\Gamma = \Aut(T_{\infty})$,
let $X$ be the set of nodes of $T_{\infty}$,
let $H=\ZZ/2\ZZ \times \ZZ/2\ZZ$, let $H_0\subseteq H$ be the diagonal subgroup,
and define $P:\Gamma\times X\to H$ by
$P(\sigma, x):=( P_{r,s}^a (\sigma,x), P_{r,s}^b (\sigma,x) )$.

Then $M_{r,s,\infty}$ is precisely the subset $G$ of Proposition~\ref{prop:Pgroup},
and equations~\eqref{eq:PRSident} give precisely hypothesis~\eqref{eq:Phomom}.
Moreover, the identity $e\in\Aut(T_{\infty})$ satisfies $\Par(e,y)=0$ for all nodes $y$,
so that $P_{r,s}^a(e,x) = P_{r,s}^b(e,x)=0$ for all nodes $x$,
and hence $e\in M_{r,s,\infty}$.

Therefore, by Proposition~\ref{prop:Pgroup}, $M_{r,s,\infty}$ is a subgroup of $\Aut(T_{\infty})$,
and $P$ is a homomorphism. Moreover, $P_{r,s}$ is the composition of
$P$ with the unique isomorphism $H_0\to \ZZ/2\ZZ$, so that $P_{r,s}$ is a homomorphism.
By definition, its kernel is $B_{r,s,\infty}$, which is therefore a subgroup of $M_{r,s,\infty}$.
\end{proof}

%

\subsection{The action of Galois}
The following result shows that when $r>s\geq 2$,
the group $M_{r,s,\infty}$ is determined solely by the restriction that a Galois element $\sigma$
must act consistently on every instance of $\zeta_4=\sqrt{-1}$.

\begin{thm}
\label{thm:PRSembed}
Let $r>s\geq 2$.
Let $x_0\in K$ not in the forward orbit of $0$, and choose a 
primitive $4$-th root of unity $\zeta_4\in\Kbar$.
Label the tree $T_{\infty}$ of preimages in $\Orb_f^{-}(x_0)$
as in Lemma~\ref{lem:pickfour}.
Then for any node $x\in\Orb_f^-(x_0)$
and any $\sigma\in G_{\infty}=\Gal(K_{\infty}/K)$, we have
\begin{equation}
\label{eq:PRSembed}
\sigma(\zeta_4) = (-1)^{P^a_{r,s}(\sigma,x)}\zeta_4
= (-1)^{P^b_{r,s}(\sigma,x)}\zeta_4 .
\end{equation}
In particular, the image of $G_{\infty}$ in $\Aut(T_{\infty})$,
induced by its action on $\Orb_f^-(x_0)$ via this labeling,
is contained in $M_{r,s,\infty}$.
Furthermore, if $\zeta_4\in K$, then this Galois image
is contained in $B_{r,s,\infty}$.
\end{thm}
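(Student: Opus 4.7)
The plan is to apply $\sigma$ to both ratios from Lemma~\ref{lem:pickfour} at the node $x$, track the signs produced by parity contributions node by node, and then recognize the resulting ratio (at the node $\sigma(x)$) as $\zeta_4$ via Lemma~\ref{lem:pickfour} again.

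In more detail, I would first note that for any node $y$ at the top of a labeled edge, the two preimages $[ya]$ and $[yb]$ are negatives of each other. Hence for every word $w\in\{a,b\}^{r-1}$, the image $\sigma([xawa])$ is one of $\pm[\sigma(xaw)a]$; more precisely,
\[ \sigma\bigl([xawa]\bigr) = (-1)^{\Par(\sigma,xaw)} [\sigma(xaw)a] , \]
since the sign is controlled by whether $\sigma(xawa)=\sigma(xaw)a$ or $\sigma(xaw)b$. Applying $\sigma$ to the numerator $\prod_{w}[xawa]$ of the first ratio in Lemma~\ref{lem:pickfour} thus produces the factor $(-1)^{\sum_w \Par(\sigma,xaw)}$, and applying $\sigma$ to the denominator $\prod_{w'}[xbw'a]$ produces the factor $(-1)^{\sum_{w'}\Par(\sigma,xbw')}$. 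Combined, the total sign is exactly $(-1)^{P^a_{r,s}(\sigma,x)}$.

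To identify what remains after pulling out the signs, I would split on $\Par(\sigma,x)$. If $\Par(\sigma,x)=0$, then $\sigma(xa)=\sigma(x)a$ and $\sigma(xb)=\sigma(x)b$, so the leftover ratio is $\prod_v[\sigma(x)ava]/\prod_{v'}[\sigma(x)bv'a]$, which equals $\zeta_4$ by the first identity of Lemma~\ref{lem:pickfour} applied at the node $\sigma(x)$. If $\Par(\sigma,x)=1$, then $\sigma(xa)=\sigma(x)b$ and $\sigma(xb)=\sigma(x)a$, so the leftover ratio becomes $\prod_v[\sigma(x)bva]/\prod_{v'}[\sigma(x)av'a]$, which equals $\zeta_4$ by the second identity of Lemma~\ref{lem:pickfour} at $\sigma(x)$. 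Either way, $\sigma(\zeta_4)=(-1)^{P^a_{r,s}(\sigma,x)}\zeta_4$. An entirely parallel computation starting from the second ratio in \eqref{eq:fourprod} yields $\sigma(\zeta_4)=(-1)^{P^b_{r,s}(\sigma,x)}\zeta_4$, proving \eqref{eq:PRSembed}.

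For the final assertions, observe that the left-hand side $\sigma(\zeta_4)$ is independent of $x$, hence the two parities $P^a_{r,s}(\sigma,x)$ and $P^b_{r,s}(\sigma,x)$ are equal and constant as $x$ varies over the nodes of $T_\infty$; this is precisely the definition of $\sigma\in M_{r,s,\infty}$. If $\zeta_4\in K$, then $\sigma(\zeta_4)=\zeta_4$ forces this common parity $P_{r,s}(\sigma)$ to be $0$, placing $\sigma$ in $B_{r,s,\infty}$. The main obstacle is the careful bookkeeping in pulling out the parity signs uniformly across the two cases $\Par(\sigma,x)=0,1$ — this is exactly where having both identities of Lemma~\ref{lem:pickfour} (one for each starting branch $a$ or $b$) is essential, and it is what lets the ratio collapse back to $\zeta_4$ in either case.
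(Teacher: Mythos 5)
Your proof is correct and follows essentially the same route as the paper: apply $\sigma$ to the identity of Lemma~\ref{lem:pickfour}, extract the sign $(-1)^{\Par(\sigma,y)}$ from each factor $\sigma([ya])=[\sigma(ya)]=(-1)^{\Par(\sigma,y)}[\sigma(y)a]$ to isolate $(-1)^{P^a_{r,s}(\sigma,x)}$, then split on $\Par(\sigma,x)$ to recognize the leftover ratio as $\zeta_4$ via Lemma~\ref{lem:pickfour} at $\sigma(x)$. The paper's argument is organized identically (equation~\eqref{eq:icross} onward), so there is nothing to flag.
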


\begin{proof}
It suffices to prove equation~\eqref{eq:PRSembed} for all $x\in\Orb_f^-(x_0)$
and all $\sigma\in G_{\infty}$. Indeed, in that case, equation~\eqref{eq:PRSdef}
would hold for all such $x$ viewed as nodes of the tree, since
$\sigma(\zeta_4)/\zeta_4 \in\{\pm 1\}$ must be independent of $x$.
That is, every $\sigma\in G_{\infty}$ would be in $M_{r,s,\infty}$; and if
$P_{r,s}(\sigma)=0$ for all $\sigma\in G_{\infty}$, then by definition
every $\sigma$ is in $B_{r,s,\infty}$.

Because of the specified labeling of the tree,
for any such $\sigma$ and $x$, we have
\begin{equation}
\label{eq:icross}
\prod_{w\in\{a,b\}^{r-1}} \big[ \sigma(xawa) \big]
= \sigma(\zeta_4) \prod_{w' \in\{a,b\}^{s-1}} \big[ \sigma(xbw'a) \big] .
\end{equation}
Note that for each node $y$ of the tree, we have
\[ \big[ \sigma(ya) \big] = (-1)^{\Par(\sigma,y)} \big[ \sigma(y) a \big] ,\]
and hence equation~\eqref{eq:icross} becomes
\[ (-1)^{P^a_{r,s}(\sigma,x)}\prod_{w\in\{a,b\}^{r-1}} \big[ \sigma(xaw)a \big]
= \sigma(\zeta_4) \prod_{w' \in\{a,b\}^{s-1}} \big[ \sigma(xbw')a \big] . \]
Since each product above is over all $w\in\{a,b\}^{r-1}$ or all $w' \in\{a,b\}^{s-1}$,
this equation becomes either
\[ (-1)^{P^a_{r,s}(\sigma,x)}\prod_{w\in\{a,b\}^{r-1}} \big[ \sigma(x)awa \big]
= \sigma(\zeta_4) \prod_{w' \in\{a,b\}^{s-1}} \big[ \sigma(x)bw'a \big] \]
if $\Par(\sigma,x)=0$, or
\[ (-1)^{P^a_{r,s}(\sigma,x)}\prod_{w\in\{a,b\}^{r-1}} \big[ \sigma(x)bwa \big]
= \sigma(\zeta_4) \prod_{w' \in\{a,b\}^{s-1}} \big[ \sigma(x)aw'a \big] \]
if $\Par(\sigma,x)=1$.
Either way, by Lemma~\ref{lem:pickfour} applied to $\sigma(x)$ in place of $x$, we have
\[ \sigma(\zeta_4) = (-1)^{P^a_{r,s}(\sigma,x)} \zeta_4 .  \]
By similar reasoning, we also have
$\sigma(\zeta_4) = (-1)^{P^b_{r,s}(\sigma,x)} \zeta_4$.
\end{proof}

\section{The special long-tail strictly preperiodic case ($r=3$ and $s=2$)}
\label{sec:Special}
In the notation of the previous section, assume that $r=3$ and $s=2$.
Since $f^3(0)=-f^2(0)$ but $f(0)\neq 0$,
the parameter $c\neq 0$ satisfies
\[ c^4 + 2c^3 + c^2 + c = -(c^2+c) \]
and hence
\begin{equation}
\label{eq:ccubic}
c^3+2c^2+2c+2=0.
\end{equation}
In this specific circumstance, as first observed in \cite[Section~3.8]{PinkPCF},
there are extra subtleties that lead to significantly more involved computations.
In particular, besides $\zeta_4$ arising as an arithmetic combination of elements of $f^{-4}(x)$
as in Lemma~\ref{lem:iroot}, it happens that $\sqrt{2}$ can also be written as
a (much more complicated) arithmetic combination of elements of $f^{-5}(x)$.
We present this formula in Lemma~\ref{lem:root2special}, but we begin with its derivation,
as follows.

\subsection{A second square root arising in backward orbits}

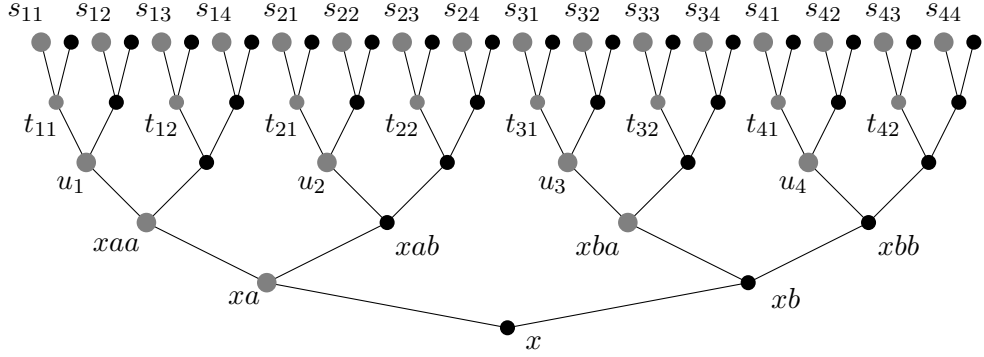
\begin{figure}
\begin{tikzpicture}
\path[draw] (0.2,4) -- (0.4,3.2) -- (0.6,4);
\path[draw] (1,4) -- (1.2,3.2) -- (1.4,4);
\path[draw] (1.8,4) -- (2,3.2) -- (2.2,4);
\path[draw] (2.6,4) -- (2.8,3.2) -- (3,4);
\path[draw] (3.4,4) -- (3.6,3.2) -- (3.8,4);
\path[draw] (4.2,4) -- (4.4,3.2) -- (4.6,4);
\path[draw] (5,4) -- (5.2,3.2) -- (5.4,4);
\path[draw] (5.8,4) -- (6,3.2) -- (6.2,4);
\path[draw] (6.6,4) -- (6.8,3.2) -- (7,4);
\path[draw] (7.4,4) -- (7.6,3.2) -- (7.8,4);
\path[draw] (8.2,4) -- (8.4,3.2) -- (8.6,4);
\path[draw] (9,4) -- (9.2,3.2) -- (9.4,4);
\path[draw] (9.8,4) -- (10,3.2) -- (10.2,4);
\path[draw] (10.6,4) -- (10.8,3.2) -- (11,4);
\path[draw] (11.4,4) -- (11.6,3.2) -- (11.8,4);
\path[draw] (12.2,4) -- (12.4,3.2) -- (12.6,4);
\path[fill,gray] (0.2,4) circle (0.13);
\path[fill] (0.6,4) circle (0.1);
\path[fill,gray] (1,4) circle (0.13);
\path[fill] (1.4,4) circle (0.1);
\path[fill,gray] (1.8,4) circle (0.13);
\path[fill] (2.2,4) circle (0.1);
\path[fill,gray] (2.6,4) circle (0.13);
\path[fill] (3,4) circle (0.1);
\path[fill,gray] (3.4,4) circle (0.13);
\path[fill] (3.8,4) circle (0.1);
\path[fill,gray] (4.2,4) circle (0.13);
\path[fill] (4.6,4) circle (0.1);
\path[fill,gray] (5,4) circle (0.13);
\path[fill] (5.4,4) circle (0.1);
\path[fill,gray] (5.8,4) circle (0.13);
\path[fill] (6.2,4) circle (0.1);
\path[fill,gray] (6.6,4) circle (0.13);
\path[fill] (7,4) circle (0.1);
\path[fill,gray] (7.4,4) circle (0.13);
\path[fill] (7.8,4) circle (0.1);
\path[fill,gray] (8.2,4) circle (0.13);
\path[fill] (8.6,4) circle (0.1);
\path[fill,gray] (9,4) circle (0.13);
\path[fill] (9.4,4) circle (0.1);
\path[fill,gray] (9.8,4) circle (0.13);
\path[fill] (10.2,4) circle (0.1);
\path[fill,gray] (10.6,4) circle (0.13);
\path[fill] (11,4) circle (0.1);
\path[fill,gray] (11.4,4) circle (0.13);
\path[fill] (11.8,4) circle (0.1);
\path[fill,gray] (12.2,4) circle (0.13);
\path[fill] (12.6,4) circle (0.1);
\path[draw] (0.4,3.2) -- (0.8,2.4) -- (1.2,3.2);
\path[draw] (2,3.2) -- (2.4,2.4) -- (2.8,3.2);
\path[draw] (3.6,3.2) -- (4,2.4) -- (4.4,3.2);
\path[draw] (5.2,3.2) -- (5.6,2.4) -- (6,3.2);
\path[draw] (6.8,3.2) -- (7.2,2.4) -- (7.6,3.2);
\path[draw] (8.4,3.2) -- (8.8,2.4) -- (9.2,3.2);
\path[draw] (10,3.2) -- (10.4,2.4) -- (10.8,3.2);
\path[draw] (11.6,3.2) -- (12,2.4) -- (12.4,3.2);
\path[fill,gray] (0.4,3.2) circle (0.1);
\path[fill] (1.2,3.2) circle (0.1);
\path[fill,gray] (2.0,3.2) circle (0.1);
\path[fill] (2.8,3.2) circle (0.1);
\path[fill,gray] (3.6,3.2) circle (0.1);
\path[fill] (4.4,3.2) circle (0.1);
\path[fill,gray] (5.2,3.2) circle (0.1);
\path[fill] (6,3.2) circle (0.1);
\path[fill,gray] (6.8,3.2) circle (0.1);
\path[fill] (7.6,3.2) circle (0.1);
\path[fill,gray] (8.4,3.2) circle (0.1);
\path[fill] (9.2,3.2) circle (0.1);
\path[fill,gray] (10,3.2) circle (0.1);
\path[fill] (10.8,3.2) circle (0.1);
\path[fill,gray] (11.6,3.2) circle (0.1);
\path[fill] (12.4,3.2) circle (0.1);
\path[draw] (0.8,2.4) -- (1.6,1.6) -- (2.4,2.4);
\path[draw] (4,2.4) -- (4.8,1.6) -- (5.6,2.4);
\path[draw] (7.2,2.4) -- (8,1.6) -- (8.8,2.4);
\path[draw] (10.4,2.4) -- (11.2,1.6) -- (12,2.4);
\path[fill,gray] (0.8,2.4) circle (0.13);
\path[fill] (2.4,2.4) circle (0.1);
\path[fill,gray] (4,2.4) circle (0.13);
\path[fill] (5.6,2.4) circle (0.1);
\path[fill,gray] (7.2,2.4) circle (0.13);
\path[fill] (8.8,2.4) circle (0.1);
\path[fill,gray] (10.4,2.4) circle (0.13);
\path[fill] (12,2.4) circle (0.1);
\path[draw] (1.6,1.6) -- (3.2,0.8) -- (4.8,1.6);
\path[draw] (8,1.6) -- (9.6,0.8) -- (11.2,1.6);
\path[fill,gray] (1.6,1.6) circle (0.13);
\path[fill] (4.8,1.6) circle (0.1);
\path[fill,gray] (8,1.6) circle (0.13);
\path[fill] (11.2,1.6) circle (0.1);
\path[draw] (3.2,0.8) -- (6.4,0.2) -- (9.6,0.8);
\path[fill,gray] (3.2,0.8) circle (0.13);
\path[fill] (9.6,0.8) circle (0.1);
\path[fill] (6.4,0.2) circle (0.1);
\node (x) at (6.75,0) {$x$};
\node (y) at (2.9,0.6) {$xa$};
\node (ny) at (10.1,0.6) {$xb$};
\node (w1) at (1.2,1.3) {$xaa$};
\node (nw1) at (5.2,1.3) {$xab$};
\node (w2) at (7.6,1.3) {$xba$};
\node (nw2) at (11.6,1.3) {$xbb$};
\node (u1) at (.6,2.1) {$u_1$};
\node (u2) at (3.8,2.1) {$u_2$};
\node (u3) at (7,2.1) {$u_3$};
\node (u4) at (10.2,2.1) {$u_4$};
\node (t11) at (0.2,2.9) {$t_{11}$};
\node (t12) at (1.8,2.9) {$t_{12}$};
\node (t21) at (3.4,2.9) {$t_{21}$};
\node (t22) at (5.0,2.9) {$t_{22}$};
\node (t31) at (6.6,2.9) {$t_{31}$};
\node (t32) at (8.2,2.9) {$t_{32}$};
\node (t41) at (9.8,2.9) {$t_{41}$};
\node (t42) at (11.4,2.9) {$t_{42}$};
\node (s11) at (0,4.4) {$s_{11}$};
\node (s12) at (0.9,4.4) {$s_{12}$};
\node (s13) at (1.7,4.4) {$s_{13}$};
\node (s14) at (2.5,4.4) {$s_{14}$};
\node (s21) at (3.4,4.4) {$s_{21}$};
\node (s22) at (4.2,4.4) {$s_{22}$};
\node (s23) at (5.0,4.4) {$s_{23}$};
\node (s24) at (5.8,4.4) {$s_{24}$};
\node (s31) at (6.6,4.4) {$s_{31}$};
\node (s32) at (7.4,4.4) {$s_{32}$};
\node (s33) at (8.2,4.4) {$s_{33}$};
\node (s34) at (9.0,4.4) {$s_{34}$};
\node (s41) at (9.8,4.4) {$s_{41}$};
\node (s42) at (10.6,4.4) {$s_{42}$};
\node (s43) at (11.4,4.4) {$s_{43}$};
\node (s44) at (12.2,4.4) {$s_{44}$};
\end{tikzpicture}
\caption{Five levels of the tree above $x$.}
\label{fig:T5special}
\end{figure}

Fix a primitive fourth root of unity $\zeta_4$,
and label the tree $T_{\infty}$ according to Lemma~\ref{lem:pickfour}.
For any node $x$ of the tree, it will be convenient to use
the abbreviated labels shown in Figure~\ref{fig:T5special} for some of the nodes above $x$.
(For example, the node $t_{21}$ is $xabaa$, and the node $-s_{34}$ is $xbabbb$.)
By Lemma~\ref{lem:pickfour}, we have
\begin{equation}
\label{eq:tuzeta}
\frac{t_{11} t_{12} t_{21} t_{22}}{u_3 u_4} = 
\frac{t_{31} t_{32} t_{41} t_{42}}{u_1 u_2} = \zeta_4 ,
\end{equation}
and
\begin{equation}
\label{eq:stzeta}
\frac{s_{11} s_{12} s_{13} s_{14}}{t_{21} t_{22}} =
\frac{s_{21} s_{22} s_{23} s_{24}}{t_{11} t_{12}} =
\frac{s_{31} s_{32} s_{33} s_{34}}{t_{41} t_{42}} =
\frac{s_{41} s_{42} s_{43} s_{44}}{t_{31} t_{32}} = \zeta_4.
\end{equation}

For each word $w\in\{a,b\}^2$, define
\begin{equation}
\label{eq:gammadef}
\gamma_w(x) := [xwaaa][xwaba][xwba] + [xwbaa][xwbba][xwaa],
\end{equation}
which we also write as 
$\gamma_i(x) := s_{i1} s_{i2} t_{i2} + s_{i3} s_{i4} t_{i1}$
for $i=1,2,3,4$.
By Proposition~\ref{prop:key} and equations~\eqref{eq:tuzeta} and~\eqref{eq:stzeta}, we have
\begin{align}
\label{eq:gamma1square}
\gamma_1(x)^2 &=
\big(f^2(0) - u_1\big)(-u_1-c) + 2s_{11} s_{12} s_{13} s_{14} t_{11} t_{12} + \big(f^2(0) + u_1\big)(u_1-c)
\notag
\\
&= 2\big( u_1^2 - cf^2(0) + \zeta_4 t_{11} t_{12} t_{21} t_{22}  \big)
= 2\big( [xaa] -c - cf^2(0) -u_3 u_4 \big)
\\
&= 2\big( [xaa] + (c^2+c+2 - u_3 u_4) \big),
\notag
\end{align}
where the final equality is by equation~\eqref{eq:ccubic}, since $cf^2(0) = c^3+c^2$.
Similarly, we have
\begin{equation}
\label{eq:gamma2square}
\gamma_2(x)^2 = 2\big( [xab] + (c^2+c+2 - u_3 u_4) \big).
\end{equation}
Noting that $[xab]=-[xaa]$,
combining equations~\eqref{eq:gamma1square} and~\eqref{eq:gamma2square} yields
\begin{align}
\label{eq:g1g2}
(\gamma_1(x) \gamma_2(x))^2 &= 4\big(-[xaa]^2 + (c^2+c+2 - u_3 u_4)^2\big)
\notag
\\
&= 4\big( (c-[xa]) + (c^2 + c+2)^2 -2(c^2+c+2) u_3 u_4 + (u_3 u_4)^2 )
\notag
\\
&= 4\big[(c-[xa]) + (3c^2 + 2c+4) - 2(c^2+c+2) u_3 u_4 + \big( f^2(0) - [xb] \big) \big]
\\
&= 4\big[ 4c^2 + 4c + 4 -2(c^2+c+2) u_3 u_4 \big] = 8(c^2+c+1) \big( 2 + c u_3 u_4 \big),
\notag
\end{align}
again using Proposition~\ref{prop:key} and equation~\eqref{eq:ccubic},
as well as the fact that $[xb]=-[xa]$.
A similar computation yields
\[ (\gamma_3(x) \gamma_4(x))^2 = 8(c^2+c+1)( 2 + c u_1 u_2 ), \]
and hence
\begin{equation}
\label{eq:gammaprod}
(\gamma_1(x) \gamma_2(x) \gamma_3(x) \gamma_4(x))^2 =
64 (c^2+c+1)^2 (2+U_a(x)) (2+cU_b(x)),
\end{equation}
where
\begin{equation}
\label{eq:Udef}
U_a(x):= c[xaaa][xaba]=cu_1u_2
\quad\text{and}\quad
U_b(x):= c[xbaa][xbba]=cu_3u_4.
\end{equation}

On the other hand, writing $U_a:=U_a(x)$ and $U_b:=U_b(x)$ for short, we have
\begin{align}
\label{eq:bigdenom}
\big( 2 + U_a + U_b \big)^2 &=
4 + 4\big(U_a + U_b\big) + c^2\big( (u_1 u_2)^2 + (u_3 u_4)^2 \big) + 2U_a U_b
\notag
\\
&= 4 + 4\big(U_a + U_b\big) + c^2\big( f^2(0)-[xa] + f^2(0)-[xb]\big) + 2U_a U_b
\notag
\\
&= 2\big[ \big(2 + c^2 f^2(0) \big) + 2\big(U_a+U_b\big) + U_a U_b \big]
\\
&= 2\big[ 4 + 2U_a + 2U_b + U_a U_b \big] = 2\big(2+U_a\big) \big(2+U_b\big),
\notag
\end{align}
where we have again used Proposition~\ref{prop:key}, equation~\eqref{eq:ccubic},
and the fact that $[xb]=-[xa]$.
The foregoing computations inspire the following result.

\begin{lemma}
\label{lem:root2special}
Let $x_0\in K$ not in the forward orbit of $0$,
and choose both $\sqrt{2}\in\Kbar$
and a primitive $4$-th root of unity $\zeta_4\in\Kbar$.
It is possible to label the tree $T_{\infty}$
of preimages $\Orb_f^-(x_0)$ in such a way that
for every node $x$ of the tree,
equation~\eqref{eq:fourprod} of Lemma~\ref{lem:pickfour} holds for $r=3$ and $s=2$,
and in addition, for every node $x$ of the tree, we have
\begin{equation}
\label{eq:deltaprod}
\frac{\gamma_{aa}(x) \gamma_{ab}(x) \gamma_{ba}(x) \gamma_{bb}(x)}
{4(c^2+c+1) (2 + U_a(x) + U_b(x) )} = \sqrt{2},
\end{equation}
where the quantities $\gamma_w(x)$ and $U_t(x)$ are as in
equations~\eqref{eq:gammadef} and~\eqref{eq:Udef}.
\end{lemma}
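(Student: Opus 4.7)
My plan is to combine the preceding algebraic computations with an augmented version of the inductive tree-labeling argument used in the proof of Lemma~\ref{lem:pickfour}. The algebraic input is essentially already assembled: equations \eqref{eq:gammaprod} and \eqref{eq:bigdenom} together yield, under any labeling satisfying Lemma~\ref{lem:pickfour},
\[ \bigl( \gamma_{aa}(x)\gamma_{ab}(x)\gamma_{ba}(x)\gamma_{bb}(x) \bigr)^2 = 32 (c^2+c+1)^2 \bigl( 2 + U_a(x) + U_b(x) \bigr)^2 \]
for every node $x$. Using the cubic relation \eqref{eq:ccubic}, one checks that $c^2+c+1 \neq 0$; an elementary dynamical argument (excluding preimages of special points in the critical orbit) shows $2 + U_a(x) + U_b(x) \neq 0$. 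Hence the left-hand side of \eqref{eq:deltaprod} already equals $\varepsilon(x)\sqrt{2}$ for some sign $\varepsilon(x)\in\{\pm 1\}$, and the remaining task is to adjust the labeling so that $\varepsilon(x)=+1$ at every node $x$ simultaneously.

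To achieve the sign refinement, I would extend the inductive labeling of Lemma~\ref{lem:pickfour} with a third class of pair-swaps. Suppose inductively that $T_{n-1}$ has been labeled so that \eqref{eq:fourprod} holds at every node through level $n-5$ and \eqref{eq:deltaprod} holds at every node through level $n-6$. Label the level-$n$ nodes arbitrarily; then perform the two swaps from Lemma~\ref{lem:pickfour} above each level-$(n-4)$ node to enforce \eqref{eq:fourprod} one level higher. With \eqref{eq:fourprod} now secured through level $n-4$, the sign $\varepsilon(x)$ becomes determined at each level-$(n-5)$ node $x$, and for each such $x$ with $\varepsilon(x)=-1$ I would execute one further swap of a carefully chosen sibling pair at level $n$ within the subtree rooted at $x$, after which the initial swaps used for~\eqref{eq:fourprod} can be recomputed if needed.

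The hard part, which I expect to occupy the bulk of the proof, is choosing the extra swap so that it (i) preserves \eqref{eq:fourprod} at every level-$(n-4)$ node, and (ii) flips the sign $\varepsilon(x)$ at the given $x$ without altering $\varepsilon(x')$ for any other level-$(n-5)$ node $x'$. Condition (i) is essentially free: the products appearing in \eqref{eq:fourprod} range over \emph{complete} sibling pairs at level $n$, so swapping a single sibling pair multiplies each such product by $(-1)^2=1$. The analogue of (ii) for $x'\neq x$ is also automatic, since the relevant level-$n$ nodes lie in disjoint subtrees. The delicate point is verifying that a well-placed swap actually toggles $\varepsilon(x)$: since $\gamma_w(x)=s_{w1}s_{w2}t_{w2}+s_{w3}s_{w4}t_{w1}$ is a \emph{sum} of two square roots rather than a product, a single sibling-pair swap in the subtree above one of the two $t_{wi}$ changes the sign of exactly one of the two summands, producing $\gamma_w(x)\mapsto-\gamma_w(x)$ and hence $\varepsilon(x)\mapsto-\varepsilon(x)$. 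In an exceptional configuration where no single swap suffices, one instead performs paired swaps at two distinct choices of $w$, relying on the fact that there are four independent $\gamma_w$'s to work with. Once this case analysis is carried out, the inductive step is complete, and the resulting labeling of $T_\infty$ satisfies both \eqref{eq:fourprod} and \eqref{eq:deltaprod} at every node.
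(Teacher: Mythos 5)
Your reduction of the problem to adjusting a sign is correct, and the algebraic identity $\bigl(\gamma_{aa}\gamma_{ab}\gamma_{ba}\gamma_{bb}\bigr)^2 = 32(c^2+c+1)^2(2+U_a+U_b)^2$ is exactly what the paper establishes before its inductive relabeling. However, the sign-flipping mechanism you propose is incorrect on both counts.

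First, your claim that swapping a single sibling pair at level $n$ preserves \eqref{eq:fourprod} is false. The products $\prod_{w\in\{a,b\}^2}[x'awa]$ appearing in \eqref{eq:fourprod} contain exactly \emph{one} node from each sibling pair (the $a$-child of each $[x'aw]$), not complete pairs. For instance, with $x$ at level $n-5$, swapping the pair $\{[xaaaaa],[xaaaab]\}=\{s_{11},-s_{11}\}$ flips exactly one factor in the numerator $\prod_{w}[xaawa]=s_{11}s_{12}s_{13}s_{14}$ of the first ratio of \eqref{eq:fourprod} at $x'=xa$, so that ratio becomes $-\zeta_4$ and \eqref{eq:fourprod} is broken.

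Second, your claim that a single sibling swap above one of the two $t_{wi}$ sends $\gamma_w(x)\mapsto-\gamma_w(x)$ is also false. Since $\gamma_w = s_{w1}s_{w2}t_{w2}+s_{w3}s_{w4}t_{w1}$ is a sum, flipping only $s_{w1}$ yields $-s_{w1}s_{w2}t_{w2}+s_{w3}s_{w4}t_{w1}$, which is $-\gamma'_w(x)$ in the paper's notation, and $\gamma'_w$ is not $\pm\gamma_w$: indeed the paper (Step~1 of the proof of Theorem~\ref{thm:R32embed}) computes $\gamma'_{aa}\gamma'_{ab}/(\gamma_{aa}\gamma_{ab})=-U_a/(2+U_b)$, which is not $\pm1$ in general. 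Your fallback of paired swaps at two distinct $w$'s has the same problem: it replaces $\gamma$'s by $\gamma'$'s, not by sign-reversed $\gamma$'s.

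The paper's fix is different: when $\delta(x)=-\sqrt{2}$, it swaps \emph{two} sibling pairs, and crucially at level $4$ (not $5$) above $x$, namely $\{[xaaaa],[xaaab]\}=\{t_{11},-t_{11}\}$ and $\{[xaaba],[xaabb]\}=\{t_{12},-t_{12}\}$. This flips both $t_{11}$ and $t_{12}$ simultaneously, so \emph{both} summands of $\gamma_{aa}$ flip sign, giving $\gamma_{aa}\mapsto-\gamma_{aa}$ as required; and because exactly two of the four factors $[xaaaa],[xaaba],[xabaa],[xabba]$ in the affected \eqref{eq:fourprod} product (at $x'=x$) flip, and similarly exactly two flip at $x'=xa$, equation~\eqref{eq:fourprod} is preserved at every node. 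You would need to replace your proposed swap with this two-pair swap at the $t$ level to make the inductive step go through.
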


\begin{proof}
Choose a labeling of the tree according to Lemma~\ref{lem:pickfour}.
We will now adjust this labeling, working inductively up from the root point $x_0$.

For each $n\geq 0$, suppose that under our current labeling of the tree,
the identities~\eqref{eq:fourprod} and~\eqref{eq:deltaprod} hold
for every node $x$ of the tree up to level $n-1$.
This condition is vacuously true for $n=0$.

For each node $x$ at level $n$, denote by $\delta(x)$
the expression on the left side of equation~\eqref{eq:deltaprod}.
We first claim that $\delta(x)$  makes sense, in that its denominator is nonzero.
Equivalently, by equation~\eqref{eq:bigdenom}, we are claiming
that neither $U_a(x)$ nor $U_b(x)$ is $-2$. However, if $U_a(x)=-2$, then
\[ u_1 u_2 = -\frac{2}{c} = c^2 + 2c+2 \]
by equation~\eqref{eq:ccubic}, and hence
\[ f^2(0)-[xa] = (u_1 u_2)^2 = c^4 + 4c^3 + 8c^2 + 8c + 4 = 2c^2 + 2c = 2f^2(0),\]
again by Proposition~\ref{prop:key} and equation~\eqref{eq:ccubic}.
But then we would have
\[ [xa]=-f^2(0) = f^3(0), \]
and hence $x=f^4(0)$ would be postcritical, which we assumed does not happen.
By this contradiction, we have $U_a(x)\neq -2$.
By a similar argument, we also have $U_b(x)\neq -2$, proving our claim.

Combining equations~\eqref{eq:gammaprod} and~\eqref{eq:bigdenom} yields
\begin{equation}
\label{eq:delta2}
\delta(x)^2 = \frac{ 64 (c^2+c+1)^2 (2+U_a(x)) (2+U_b(x))}{32(c^2+c+1)^2 (2+U_a(x))(2+U_b(x)) } = 2,
\end{equation}
so that $\delta(x)=\pm\sqrt{2}$.
If $\delta(x)=\sqrt{2}$, then make no change to the labeling above $x$.
On the other hand, if $\delta(x)=-\sqrt{2}$,
then swap the labels of the nodes $xaaaa$ and $xaaab$,
and also swap the labels of the nodes $xaaba$ and $xaabb$.
(That is, in the notation of Figure~\ref{fig:T5special},
make two parity changes 4 levels above $x$:
specifically, by switching the labels of $\pm t_{11}$, and also switching the labels of $\pm t_{12}$.
At 5 levels above $x$, this also forces swapping
$s_{11}=[xaaaaa]$ with $s_{12}=[xaaaba]$, as well as $-s_{11}=[xaaaab]$ with $-s_{12}=[xaaabb]$,
and at the same time swapping 
$s_{13}=[xaabaa]$ with $s_{14}=[xaabba]$, and $-s_{13}=[xaabab]$ with $-s_{14}=[xaabbb]$.)

This change reverses the sign of $\gamma_{aa}(x)$ but no other factors in
the expression for $\delta(x)$, which
therefore becomes $+\sqrt{2}$ as desired. In addition, the change does not affect
the value of expression~\eqref{eq:deltaprod} at any other node (in place of $x$)
at or below level $n$ of the tree.

Moreover, the only nodes for which equation~\eqref{eq:fourprod}
might be affected by these swaps are $x$ (for the numerator of the first expression in that equation)
and $xa$ (for the denominator of the second). In both cases, however, the signs of exactly 
two terms of the relevant product change, and hence the value of the full expression is unaffected.
Thus, the change of labels above preserves property that equation~\eqref{eq:fourprod} holds
at every node of the whole tree.

Having made this adjustment above every node $x$ at level $n$, then, our labeling
satisfies equation~\eqref{eq:deltaprod} at every node up to and including level $n$,
and also still satisfies equation~\eqref{eq:fourprod} at every node of the whole tree.
Our inductive construction of the desired labeling is therefore complete.
\end{proof}

\subsection{A preliminary result regarding the associated arboreal subgroup}
We already saw the group $M_{3,2,\infty}$ and its action on $\zeta_4$ via $P_{3,2}$
in Definition~\ref{def:MRSiroot}.
Detecting the action of Galois on $\sqrt{2}$ is much more subtle,
but the following definition turns out to be the appropriate quantity,
in light of Lemma~\ref{lem:root2special}.

\begin{defin}
\label{def:R2root}
Fix a labeling $a,b$ of $T_{\infty}$.
For any word $x$ in the symbols $\{a,b\}$ and any $\sigma\in\Aut(T_{\infty})$,
define
\begin{align}
\label{eq:R2def}
R_{3,2}(\sigma,x) := & \sum_{w\in\{a,b\}^{2}}
\bigg[ \Par(\sigma,xw) + \Par(\sigma,xwb) + \sum_{t\in\{a,b\} } \Par(\sigma,xwat) \bigg]
\\
& + \big[ \Par(\sigma,xaa) + \Par(\sigma,xab) \big] \big[ \Par(\sigma,xba) + \Par(\sigma,xbb) \big]
\in \ZZ/2\ZZ . \notag
\end{align}
Define $\tilde{M}_{3,2,\infty}$ to be the set of all $\sigma\in M_{3,2,\infty}$ for which
\begin{equation}
\label{eq:MR2def}
R_{3,2}(\sigma,x_1) = R_{3,2}(\sigma,x_2) \in\ZZ/2\ZZ
\end{equation}
for all nodes $x_1,x_2$ of $T_{\infty}$. For $\sigma\in \tilde{M}_{3,2,\infty}$,
define $R_{3,2}(\sigma)$ to be this common value.
Finally, define $\tilde{B}_{3,2,\infty} := \{\sigma\in \tilde{M}_{3,2,\infty} \cap B_{3,2,\infty} : R_{3,2}(\sigma)=0 \}$.
\end{defin}

\begin{thm}
\label{thm:R2root}
The sets $\tilde{M}_{3,2,\infty}$ and $\tilde{B}_{3,2,\infty}$ are subgroups
of $M_{3,2,\infty}$. Moreover, $R_{3,2}:\tilde{M}_{3,2,\infty}\to\ZZ/2\ZZ$
is a homomorphism with kernel $\tilde{B}_{3,2,\infty}$.
\end{thm}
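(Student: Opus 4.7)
The plan is to invoke Proposition~\ref{prop:Pgroup} with ambient group $\Gamma := M_{3,2,\infty}$ (a group by Theorem~\ref{thm:MRSiroot}), with $X$ the set of nodes of $T_\infty$, $H = H_0 := \ZZ/2\ZZ$, and $P := R_{3,2}$. Since $\Par(e, y) = 0$ for every $y$, we have $R_{3,2}(e, x) = 0$ for all $x$, so the identity lies in $\tilde M_{3,2,\infty}$. The substantive task is then to verify the equivariance identity
\[ R_{3,2}(\sigma) + R_{3,2}(\tau, x) \equiv R_{3,2}(\sigma\tau, x) \pmod{2} \]
for every $\sigma \in \tilde M_{3,2,\infty}$, $\tau \in M_{3,2,\infty}$, and node $x$. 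Once this is in hand, Proposition~\ref{prop:Pgroup} yields that $\tilde M_{3,2,\infty}$ is a subgroup and $R_{3,2}$ a homomorphism on it, and $\tilde B_{3,2,\infty}$, being the intersection of $\tilde M_{3,2,\infty}$ with the kernels of the two homomorphisms $P_{3,2}$ and $R_{3,2}$, is a subgroup as well.

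To organize the verification, decompose $R_{3,2}(\sigma, x) = L(\sigma, x) + Q(\sigma, x)$, where $L$ is the linear sum on the first line of~\eqref{eq:R2def} and $Q(\sigma, x) = A(\sigma, x) B(\sigma, x)$ is the product on the second line, with $A(\sigma, x) := \Par(\sigma, xaa) + \Par(\sigma, xab)$ and $B(\sigma, x) := \Par(\sigma, xba) + \Par(\sigma, xbb)$. The linear part is treated essentially as in the proof of Theorem~\ref{thm:MRSiroot}: applying the cocycle identity~\eqref{eq:sgn22} termwise, sums over complete levels above $x$ transform equivariantly into sums over the corresponding levels above $\tau(x)$, but the partial sums in $L$ (namely, the level-3 sum restricted to nodes ending in $b$ and the level-4 sum restricted to nodes whose third letter is $a$) pick up explicit correction terms of the form $\sum_w \Par(\tau, xw)\bigl[\Par(\sigma,\tau(xw)a) + \Par(\sigma,\tau(xw)b)\bigr]$, arising because the subset of nodes ending in a prescribed letter need not be preserved by $\tau$.

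For the quadratic part, a short case analysis on $\Par(\tau,x)$, combined with the observation that $\{\tau(ya), \tau(yb)\} = \{\tau(y)a, \tau(y)b\}$ as sets for every node $y$, yields $A(\sigma\tau, x) = A(\tau, x) + A(\sigma, \tau(x))$ and $B(\sigma\tau, x) = B(\tau, x) + B(\sigma, \tau(x))$ when $\Par(\tau, x) = 0$, with $A$ and $B$ on the right interchanged when $\Par(\tau, x) = 1$. Expanding the product then gives, modulo~$2$,
\[ Q(\sigma\tau, x) = Q(\sigma, \tau(x)) + Q(\tau, x) + \Delta(\sigma, \tau, x), \]
where $\Delta$ is an explicit cross-term bilinear in the level-2 parities of $\tau$ at $x$ and of $\sigma$ at $\tau(x)$. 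The hypothesis $\sigma \in \tilde M_{3,2,\infty}$ lets us replace $R_{3,2}(\sigma, \tau(x))$ by the constant $R_{3,2}(\sigma)$, so the cocycle identity reduces to showing that the linear correction $E(\sigma, \tau, x)$ produced above satisfies $E + \Delta \equiv 0 \pmod{2}$.

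The main obstacle is verifying this cancellation. The key tool is the $M_{3,2,\infty}$-constraint $P^a_{3,2}(\tau, y) \equiv P^b_{3,2}(\tau, y) \pmod{2}$, applied at $y = x$, $xa$, $xb$, and their children, which rewrites the partial-level $\Par(\tau, \cdot)$-sums appearing inside $E$ in terms of the level-2 quantities $A(\tau, x)$ and $B(\tau, x)$ (and analogous quantities at the children of $x$), precisely matching the factored form of $\Delta$. Carrying out this parity bookkeeping cleanly is the technical heart of the proof; it is the analog for $\sqrt{2}$ of the quadratic phenomenon that already forces Pink~\cite{PinkPCF} to treat $(r,s) = (3,2)$ separately from the other strictly preperiodic long-tail cases.
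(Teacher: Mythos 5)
Your overall strategy matches the paper's: invoke Proposition~\ref{prop:Pgroup} with $\Gamma = M_{3,2,\infty}$, $H = H_0 = \ZZ/2\ZZ$, $P = R_{3,2}$, decompose $R_{3,2}$ into a linear piece and a quadratic piece, push the cocycle identity~\eqref{eq:sgn22} through each, and collect a bilinear correction $E$ from the partial-level sums together with a cross-term $\Delta$ from expanding the product. Your computation of how $A(\sigma\tau,x)$ and $B(\sigma\tau,x)$ decompose (with $A,B$ swapping when $\Par(\tau,x)=1$) is also exactly what the paper does. So far so good. But the account of the crucial cancellation $E + \Delta \equiv 0$ is wrong, and this is precisely the delicate step that makes the $(r,s)=(3,2)$ case special.

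The correction one actually gets is
\[ E \;=\; \sum_{w\in\{a,b\}^2} \Par(\tau,xw)\, V_{3,2}\bigl(\sigma,\tau(xw)\bigr), \qquad
V_{3,2}(\sigma,y) := \sum_{v\in\{a,b\}}\Par(\sigma,yv) + \sum_{v\in\{a,b\}^2}\Par(\sigma,yv). \]
(Your displayed correction form captures only the level-$3$ part; the level-$4$ partial sum contributes the rest of $V_{3,2}$.) The $\tau$-dependence in $E$ is through the individual level-$2$ parities $\Par(\tau,xw)$, which are already the components of $A(\tau,x)$ and $B(\tau,x)$; nothing about $\tau$ needs rewriting. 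What must be rewritten is the $\sigma$-quantity $V_{3,2}(\sigma,\tau(xw))$, and this is done via the $M_{3,2,\infty}$-constraint on $\sigma$, packaged in the paper as Lemma~\ref{lem:R2root}: for $\sigma\in M_{3,2,\infty}$, $V_{3,2}(\sigma,\tau(x)aa)=V_{3,2}(\sigma,\tau(x)ab)=B(\sigma,\tau(x))$ and $V_{3,2}(\sigma,\tau(x)ba)=V_{3,2}(\sigma,\tau(x)bb)=A(\sigma,\tau(x))$. With that substitution $E$ factors as $A(\tau,x)\bigl[\Par(\sigma,\tau(xba))+\Par(\sigma,\tau(xbb))\bigr] + B(\tau,x)\bigl[\Par(\sigma,\tau(xaa))+\Par(\sigma,\tau(xab))\bigr]$, which then cancels $\Delta$ by pure algebra modulo $2$, no further constraint needed. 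Your proposal instead attributes the rewriting to the constraint $P^a_{3,2}(\tau,y)=P^b_{3,2}(\tau,y)$ on $\tau$; that constraint relates partial sums of $\tau$-parities above $y$, which do not appear in $E$, so it does not engage with the problem at all. A clear signal that you've picked the wrong element: the paper's cocycle identity~\eqref{eq:Rident} is proved for all $\tau \in \Aut(T_\infty)$, not merely $\tau \in M_{3,2,\infty}$, so no property of $\tau$ beyond being a tree automorphism is used — only the $M_{3,2,\infty}$-constraint on $\sigma$ enters, and isolating it as Lemma~\ref{lem:R2root} is the missing ingredient in your sketch.
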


To prove Theorem~\ref{thm:R2root}, it will be convenient to define
\begin{equation}
\label{eq:V32def}
V_{3,2}(\sigma,y) :=
\sum_{w\in\{a,b\}^2} \Par(\sigma,yw) + \sum_{w'\in \{a,b\}} \Par(\sigma,yw'),
\end{equation}
for any node $y$ of the tree $T_{\infty}$.
Note that whereas the expressions $P_{3,2}^{a}(\sigma,x)$ and $P_{3,2}^{b}(\sigma,x)$
of Definition~\ref{def:MRSiroot} involve sums one and two levels above $xa$ and $xb$
separately, the sums defining $V_{3,2}(\sigma,y)$ lie one and two levels above the same node $y$.

We will also need the following lemma.

\begin{lemma}
\label{lem:R2root}
Let $\sigma\in M_{3,2,\infty}$, and let $x$ be any node of $T_{\infty}$. Then
\begin{equation}
\label{eq:R2roota}
V_{3,2}(\sigma,xaa) = V_{3,2}(\sigma,xab) = \Par(\sigma,xba) + \Par(\sigma,xbb)
\end{equation}
and
\begin{equation}
\label{eq:R2rootb}
V_{3,2}(\sigma,xba) = V_{3,2}(\sigma,xbb) = \Par(\sigma,xaa) + \Par(\sigma,xab)
\end{equation}
\end{lemma}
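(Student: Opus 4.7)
The plan is to exploit two particular instances of the constraint defining $M_{3,2,\infty}$, namely that $P^a_{3,2}(\sigma,\cdot)$ and $P^b_{3,2}(\sigma,\cdot)$ take a single common value on all nodes of $T_\infty$. Specifically, unpacking the definition with $r=3$, $s=2$ gives
\[ P^a_{3,2}(\sigma,y) = \sum_{w\in\{a,b\}^2}\Par(\sigma,yaw) + \sum_{w'\in\{a,b\}}\Par(\sigma,ybw') \]
and the analogous expression for $P^b_{3,2}$ with the roles of $a$ and $b$ swapped at the first letter. The strategy is to read off both asserted identities in \eqref{eq:R2roota} by applying this constraint at the nodes $xa$ and $x$, working throughout in $\ZZ/2\ZZ$.

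First, I would apply the constraint $P^a_{3,2}(\sigma,xa)=P^b_{3,2}(\sigma,xa)$. Expanding, the left side is a sum of parities above $xaa$ at distance~2 together with parities above $xab$ at distance~1, while the right side exchanges these roles. Rearranging all terms involving $xaa$ to one side and those involving $xab$ to the other produces exactly $V_{3,2}(\sigma,xaa)=V_{3,2}(\sigma,xab)$, the first equality of \eqref{eq:R2roota}.

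Next, I would apply the constraint $P^a_{3,2}(\sigma,x)=P^a_{3,2}(\sigma,xa)$. The right side is $\sum_{w\in\{a,b\}^2}\Par(\sigma,xaaw)+\sum_{w'\in\{a,b\}}\Par(\sigma,xabw')$, and on the left the 4-term sum $\sum_{w\in\{a,b\}^2}\Par(\sigma,xaw)$ splits as $\sum_{w'\in\{a,b\}}\Par(\sigma,xaaw') + \sum_{w'\in\{a,b\}}\Par(\sigma,xabw')$. Canceling the common term $\sum_{w'\in\{a,b\}}\Par(\sigma,xabw')$ modulo~$2$ and regrouping yields
\[ \sum_{w\in\{a,b\}^2}\Par(\sigma,xaaw)+\sum_{w'\in\{a,b\}}\Par(\sigma,xaaw') = \Par(\sigma,xba)+\Par(\sigma,xbb), \]
which is precisely $V_{3,2}(\sigma,xaa)=\Par(\sigma,xba)+\Par(\sigma,xbb)$, completing \eqref{eq:R2roota}.

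The second set of equalities \eqref{eq:R2rootb} is obtained by the same argument after swapping the roles of $a$ and $b$ at the first level above $x$: use $P^b_{3,2}(\sigma,xb)=P^a_{3,2}(\sigma,xb)$ for the equality $V_{3,2}(\sigma,xba)=V_{3,2}(\sigma,xbb)$, and $P^b_{3,2}(\sigma,x)=P^b_{3,2}(\sigma,xb)$ for the identification with $\Par(\sigma,xaa)+\Par(\sigma,xab)$. There is no genuine obstacle here; the computation is purely bookkeeping in $\ZZ/2\ZZ$. The only thing to be careful about is the asymmetry between the level-2 and level-1 sums in $P^a_{3,2}$ and $P^b_{3,2}$ (which is what forces $s=2$, $r=3$ into the argument), and making sure that when the length-2 sum $\sum_{w\in\{a,b\}^2}\Par(\sigma,xaw)$ is split over its two prefixes, the resulting terms align cleanly with the sums in the definition of $V_{3,2}$.
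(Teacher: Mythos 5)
Your proof is correct and follows essentially the same route as the paper: both arguments simply expand the defining constraint of $M_{3,2,\infty}$ at the nodes $x$ and $xa$ (and by symmetry $xb$) and cancel parity sums modulo~$2$. The only cosmetic difference is that you establish $V_{3,2}(\sigma,xaa)=V_{3,2}(\sigma,xab)$ directly from $P^a_{3,2}(\sigma,xa)=P^b_{3,2}(\sigma,xa)$, whereas the paper derives the single chain $V_{3,2}(\sigma,xaa)=\Par(\sigma,xba)+\Par(\sigma,xbb)$ and remarks the other identities are obtained similarly.
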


\begin{proof}
We will prove half of equation~\eqref{eq:R2roota},
that $V_{3,2}(\sigma,xaa) = \Par(\sigma,xba) + \Par(\sigma,xbb)$.
The proof of the other half, and the proofs of both halves of equation~\eqref{eq:R2rootb}, are similar.

By definition of $P_{3,2}$ (from Definition~\ref{def:MRSiroot}) written as $P^a_{3,2}(\sigma,xa)$,
bearing in mind that we are working in $\ZZ/2\ZZ$, we have
\[ \sum_{w\in\{a,b\}^2} \Par(\sigma,xaaw) =
P_{3,2}(\sigma) + \sum_{w'\in\{a,b\}} \Par(\sigma,xabw'),\]
and hence
\begin{align*} 
V_{3,2}(\sigma,xaa) & = 
\sum_{w\in\{a,b\}^2} \Par(\sigma,xaaw) + \sum_{w'\in \{a,b\}} \Par(\sigma,xaaw') \\
&= P_{3,2}(\sigma) + \sum_{w'\in\{a,b\}} \Par(\sigma,xabw') + \sum_{w'\in \{a,b\}} \Par(\sigma,xaaw') \\
& = P_{3,2}(\sigma) + \sum_{w\in \{a,b\}^2} \Par(\sigma,xaw)
= 2P_{3,2}(\sigma) + \sum_{w'\in \{a,b\}} \Par(\sigma,xbw') \\
& = \Par(\sigma,xba) + \Par(\sigma,xbb),
\end{align*}
where the fourth equality is by writing $P_{3,2}(\sigma) = P^a_{3,2}(\sigma,x)$,
and the fifth is because we are working in $\ZZ/2\ZZ$.
\end{proof}

\begin{proof}[Proof of Theorem~\ref{thm:R2root}]
\textbf{Step~1}.
We claim that for all $\sigma\in\tilde{M}_{3,2,\infty}$, all $\tau\in\Aut(T_{\infty})$,
and all nodes $x$ of the tree, we have
\begin{equation}
\label{eq:Rident}
R_{3,2}(\sigma,\tau(x)) + R_{3,2}(\tau,x) = R_{3,2}(\sigma\tau,x) .
\end{equation}
Indeed, for such $\sigma,\tau, x$, the left side of equation~\eqref{eq:Rident} is
\begin{align}
\label{eq:bigRlines}
\sum_{w\in\{a,b\}^2} &
\big[ \Par(\sigma,\tau(x) w) +\Par(\tau, x w) \big]
+ \sum_{w\in\{a,b\}^2}
\big[ \Par(\sigma,\tau(x)wb) +\Par(\tau, x wb) \big]
\notag
\\
& + \sum_{w\in\{a,b\}^2}
\bigg[ \sum_{t\in\{a,b\} } \Par(\sigma,\tau(x)wat) +\Par(\tau, x wat) \bigg]
\notag
\\
& + \big[ \Par(\sigma,\tau(x)aa) + \Par(\sigma,\tau(x)ab) \big]
\big[ \Par(\sigma,\tau(x)ba) + \Par(\sigma,\tau(x)bb) \big]
\notag
\\
& + \big[ \Par(\tau,xaa) + \Par(\tau,xab) \big] \big[ \Par(\tau,xba) + \Par(\tau,xbb) \big]
\notag
\\
= \sum_{w\in\{a,b\}^2} &
\big[ \Par(\sigma,\tau(xw)) +\Par(\tau, x w) \big]
+ \sum_{w\in\{a,b\}^2}
\big[ \Par(\sigma,\tau(xw)b) +\Par(\tau, x wb) \big]
\notag
\\
& + \sum_{w\in\{a,b\}^2}
\bigg[ \sum_{t\in\{a,b\} } \Par(\sigma,\tau(xw)at) +\Par(\tau, x wat) \bigg]
\\
& + \big[ \Par(\sigma,\tau(xaa)) + \Par(\sigma,\tau(xab)) \big]
\big[ \Par(\sigma,\tau(xba)) + \Par(\sigma,\tau(xbb)) \big]
\notag
\\
& + \big[ \Par(\tau,xaa) + \Par(\tau,xab) \big] \big[ \Par(\tau,xba) + \Par(\tau,xbb) \big]
\notag
\end{align}
By equation~\eqref{eq:sgn22}, the first sum on the right side of equation~\eqref{eq:bigRlines} is 
$\sum_{w\in\{a,b\}^2} \Par(\sigma\tau,xw)$.

We consider the second and third sums together. The contribution of each $w\in \{a,b\}^2$
to these sums is
\begin{equation}
\label{eq:wterm}
\Par(\sigma,\tau(xw)b) + \Par(\tau, xwb)
+ \sum_{t\in\{a,b\} } \big[ \Par(\sigma,\tau(xw)at) +\Par(\tau, x wat) \big] .
\end{equation}
If $\Par(\tau,xw)=0$, so that $\tau(xw)a=\tau(xwa)$ and $\tau(xw)b=\tau(xwb)$,
then expression~\eqref{eq:wterm} is
\begin{align*}
\Par(\sigma,\tau(xwb)) &+ \Par(\tau, xwb)
+ \sum_{t\in\{a,b\} } \big[ \Par(\sigma,\tau(xwat)) +\Par(\tau, x wat) \big]
\\
& = \Par(\sigma\tau,xwb) + \sum_{t\in\{a,b\} } \Par(\sigma\tau,xwat) .
\end{align*}
On the other hand, if $\Par(\tau,xw)=1$, so that $\tau(xw)a=\tau(xwb)$ and $\tau(xw)b=\tau(xwa)$,
then a similar computation shows that expression~\eqref{eq:wterm} is
\[ V_{3,2}(\sigma,\tau(xw)) + \Par(\sigma\tau,xwb) + \sum_{t\in\{a,b\} } \Par(\sigma\tau,xwat), \]
where $V_{3,2}$ is as in equation~\eqref{eq:V32def}.
In either case, then, expression~\eqref{eq:wterm} is
\[ V_{3,2}(\sigma,\tau(xw)) \Par(\tau,xw) + \Par(\sigma\tau,xwb) + \sum_{t\in\{a,b\} } \Par(\sigma\tau,xwat). \]
Hence, the second and third sums of equation~\eqref{eq:bigRlines} together are
\begin{align*}
\label{eq:Rbig2}
\sum_{w\in\{a,b\}^2} &
\bigg[ V_{3,2}(\sigma,\tau(xw)) \Par(\tau,xw)+ \Par(\sigma\tau,xwb) + \sum_{t\in\{a,b\} } \Par(\sigma\tau,xwat) \bigg]
\\
&= \sum_{w\in\{a,b\}^2} \bigg[ \Par(\sigma\tau,xwb) + \sum_{t\in\{a,b\} } \Par(\sigma\tau,xwat) \bigg]
\\
& + \big[ \Par(\sigma,\tau(xba)) + \Par(\sigma,\tau(xbb)) \big] \big[ \Par(\tau,xaa) + \Par(\tau,xab) \big]
\\
& + \big[ \Par(\sigma,\tau(xaa)) + \Par(\sigma,\tau(xab)) \big] \big[ \Par(\tau,xba) + \Par(\tau,xbb) \big]
\end{align*}
where the equality is because by Lemma~\ref{lem:R2root}, we have
\[ V_{3,2}(\sigma,\tau(xa)a) = V_{3,2}(\sigma,\tau(xa)b)
= \Par(\sigma,\tau(xb)a) +\Par(\sigma,\tau(xb)b) \]
and
\[ V_{3,2}(\sigma,\tau(xb)a) = V_{3,2}(\sigma,\tau(xb)b)
= \Par(\sigma,\tau(xa)a) +\Par(\sigma,\tau(xa)b) . \]

Thus, combining all three sums and the last two expressions from equation~\eqref{eq:bigRlines},
we have
\[ R_{3,2}(\sigma,\tau(x)) + R_{3,2}(\tau,x) = R_{3,2}(\sigma\tau, x) + Z_{3,2}(\sigma,\tau,x) \]
where $Z_{3,2}(\sigma,\tau,x)$ is
\begin{align*}
\big[ \Par( & \sigma,\tau(xba)) + \Par(\sigma,\tau(xbb)) \big] \big[ \Par(\tau,xaa) + \Par(\tau,xab) \big]
\\
& + \big[ \Par(\sigma,\tau(xaa)) + \Par(\sigma,\tau(xab)) \big] \big[ \Par(\tau,xba) + \Par(\tau,xbb) \big]
\\
& + \big[ \Par(\sigma,\tau(xaa)) + \Par(\sigma,\tau(xab)) \big]
\big[ \Par(\sigma,\tau(xba)) + \Par(\sigma,\tau(xbb)) \big]
\\
& + \big[ \Par(\tau,xaa) + \Par(\tau,xab) \big] \big[ \Par(\tau,xba) + \Par(\tau,xbb) \big]
\\
& - \big[ \Par(\sigma\tau,xaa) + \Par(\sigma\tau,xab) \big] \big[ \Par(\sigma\tau,xba) + \Par(\sigma\tau,xbb) \big] .
\end{align*}
Applying \eqref{eq:sgn22} to expand each instance of $\Par(\sigma\tau,y)$ in the last line above
as $\Par(\sigma,\tau(y)) + \Par(\tau,y)$, and expanding each of the resulting products, all of the terms cancel.
That is, $Z_{3,2}(\sigma,\tau,x)=0$, proving the claimed identity~\eqref{eq:Rident}.

\medskip

\textbf{Step~2}.
In the notation of Proposition~\ref{prop:Pgroup},
let $\Gamma = M_{3,2,\infty}$,
let $X$ be the set of nodes of $T_{\infty}$,
let $H=H_0=\ZZ/2\ZZ$, and let $P= R_{3,2}$.
Then $\tilde{M}_{r,s,\infty}$ is precisely the subset $G$ of Proposition~\ref{prop:Pgroup},
the identity element $e\in M_{3,2,\infty}$ clearly satisfies $R_{3,2}(e,x)=0$ for all nodes $x$,
and equation~\eqref{eq:Rident} gives hypothesis~\eqref{eq:Phomom}.
Therefore, by Proposition~\ref{prop:Pgroup}, $\tilde{M}_{r,s,\infty}$ is a subgroup of $M_{3,2,\infty}$,
and $R_{3,2}$ is a homomorphism. 
By definition, the kernel of $R_{3,2}$ is $\tilde{B}_{r,s,\infty}$,
which is therefore a subgroup of $\tilde{M}_{r,s,\infty}$.
\end{proof}


\subsection{The action of Galois}
As in previous sections, the following result shows that in our current case, that $r=3$ and $s=2$,
the group $\tilde{M}_{3,2,\infty}$ is determined solely by the restriction that a Galois element
must act consistently on every instance of $\sqrt{-1}$ and of $\sqrt{2}$.

\begin{thm}
\label{thm:R32embed}
Let $x_0\in K$ not in the forward orbit of $0$, and fix a choice of
$\sqrt{2}\in\Kbar$.
Label the tree $T_{\infty}$ of preimages $\Orb_f^{-}(x_0)$
as in Lemma~\ref{lem:pickroot2}.
Then for any node $x\in\Orb_f^-(x_0)$
and any $\sigma\in G_{\infty}=\Gal(K_{\infty}/K)$, we have
\begin{equation}
\label{eq:R32embed}
\sigma(\sqrt{2}) = (-1)^{R_{3,2}(\sigma,x)}\sqrt{2} .
\end{equation}
In particular, the image of $G_{\infty}$ in $\Aut(T_{\infty})$,
induced by its action on $\Orb_f^-(x_0)$ via this labeling,
is contained in $\tilde{M}_{3,2,\infty}$.
Furthermore, if $\zeta_4, \sqrt{2}\in K$, then this Galois image
is contained in $\tilde{B}_{3,2,\infty}$.
\end{thm}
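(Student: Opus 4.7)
The strategy mirrors Theorem~\ref{thm:PRSembed}, with Lemma~\ref{lem:root2special} replacing Lemma~\ref{lem:pickfour} and $\sqrt{2}$ replacing $\zeta_4$. Fix $\sigma\in G_\infty$ and a node $x$, set $y:=\sigma(x)$, and write $\delta(z)$ for the left-hand side of equation~\eqref{eq:deltaprod} at $z$. Lemma~\ref{lem:root2special} gives $\delta(x)=\delta(y)=\sqrt{2}$, so $\sigma(\sqrt{2})=\sigma(\delta(x))=[\sigma(\delta(x))/\delta(y)]\cdot\sqrt{2}$, reducing the theorem to the pointwise identity
\[
\sigma(\delta(x))/\delta(y)\;=\;(-1)^{R_{3,2}(\sigma,x)}.
\]
Once this is established, the left side is visibly independent of $x$, so $R_{3,2}(\sigma,\cdot)$ is constant and $\sigma\in\tilde{M}_{3,2,\infty}$ (with $\sigma\in M_{3,2,\infty}$ already guaranteed by Theorem~\ref{thm:PRSembed}). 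If $\zeta_4,\sqrt{2}\in K$, both $P_{3,2}(\sigma)$ and $R_{3,2}(\sigma)$ vanish and $\sigma\in\tilde{B}_{3,2,\infty}$.

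Write $\delta=E/D$ with $E(x):=\gamma_{aa}(x)\gamma_{ab}(x)\gamma_{ba}(x)\gamma_{bb}(x)$ and $D(x):=4(c^2+c+1)(2+U_a(x)+U_b(x))$. For the denominator, iterating $[\sigma(zt)]=(-1)^{\Par(\sigma,z)}[\sigma(z)t]$ through levels 2 and 3 gives, when $\Par(\sigma,x)=0$,
\[
\sigma(U_a(x))+\sigma(U_b(x))\;=\;(-1)^{p_a}U_a(y)+(-1)^{p_b}U_b(y),
\]
where $p_a:=\Par(\sigma,xaa)+\Par(\sigma,xab)$ and $p_b:=\Par(\sigma,xba)+\Par(\sigma,xbb)$, with $U_a,U_b$ interchanged on the right if $\Par(\sigma,x)=1$. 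From equation~\eqref{eq:gammaprod} we then find $\sigma(E(x))^2=64(c^2+c+1)^2(2+(-1)^{p_a}U_a(y))(2+(-1)^{p_b}U_b(y))$, and the companion identities
\[
(2\pm U_a\pm U_b)^2\;=\;2(2\pm U_a)(2\pm U_b),
\]
which are the analogues of equation~\eqref{eq:bigdenom} and all follow from the relation $U_a^2+U_b^2=4$ derived from equation~\eqref{eq:ccubic}, give
\[
\sigma(E(x))\;=\;\epsilon(\sigma,x)\cdot 4\sqrt{2}\,(c^2+c+1)\bigl(2+(-1)^{p_a}U_a(y)+(-1)^{p_b}U_b(y)\bigr)
\]
for some sign $\epsilon(\sigma,x)\in\{\pm 1\}$ that, after cancellation with $\sigma(D(x))/D(y)$, equals $\sigma(\sqrt{2})/\sqrt{2}$.

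To pin down $\epsilon(\sigma,x)$, I expand $\prod_w\sigma(\gamma_w(x))=\sigma(E(x))$ directly, pushing $\sigma$ through each factor $\gamma_w(x)=[xwaaa][xwaba][xwba]+[xwbaa][xwbba][xwaa]$ by repeated application of the parity relation. The sign contributions from $[\sigma(xwat)]$, $[\sigma(xwb)]$, and from relabeling the level-2 index $w$ accumulate, upon summation over $w\in\{a,b\}^2$, into the triple linear sum $\sum_w[\Par(\sigma,xw)+\Par(\sigma,xwb)+\sum_{t}\Par(\sigma,xwat)]$ of equation~\eqref{eq:R2def}. The quadratic term $[\Par(\sigma,xaa)+\Par(\sigma,xab)][\Par(\sigma,xba)+\Par(\sigma,xbb)]=p_a p_b$, which is the main obstacle and the chief novelty over Theorem~\ref{thm:PRSembed}, does \emph{not} come from any single level of parities; rather, it records which of the four factored forms $(2\pm U_a\pm U_b)/\sqrt{2}$ one must invoke when extracting $\sqrt{\sigma(E(x))^2}$, and a case analysis on $(p_a,p_b)\in(\ZZ/2\ZZ)^2$ confirms that it supplies the missing $(-1)^{p_a p_b}$ correction to $\epsilon(\sigma,x)$. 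Combining the linear and quadratic contributions yields $\epsilon(\sigma,x)=(-1)^{R_{3,2}(\sigma,x)}$ and completes the proof.
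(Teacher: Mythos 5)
Your overall plan mirrors the paper's: reduce to comparing $\sigma(\delta(x))$ with $\delta(\sigma(x))$, compute how $\sigma$ acts on $U_a(x),U_b(x)$ via the parities $p_a,p_b$, and carry out a case analysis on $(p_a,p_b)\in(\ZZ/2\ZZ)^2$. The companion identities $(2\pm U_a\pm U_b)^2 = 2(2\pm U_a)(2\pm U_b)$ that you derive from $U_a^2+U_b^2 = 4$ are correct and are essentially the identities the paper establishes in Step~1 of its own proof.

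However, there is a genuine gap in the step where you pin down the sign $\epsilon(\sigma,x)$. You claim that pushing $\sigma$ through each factor $\gamma_w(x)$ accumulates, over $w\in\{a,b\}^2$, into the linear part of $R_{3,2}(\sigma,x)$, with the bilinear term $p_ap_b$ appearing only as a book-keeping correction for ``which factored form one invokes when extracting $\sqrt{\sigma(E(x))^2}$.'' This is not how the sign works. When $p_a$ or $p_b$ is odd, the two summands of $\gamma_w(x)$ acquire \emph{opposite} signs under $\sigma$ (because $V_{3,2}(\sigma,xw)$ is odd by Lemma~\ref{lem:R2root}), so $\sigma(\gamma_w(x))$ is not $\pm\gamma_{\tilde{w}}(\sigma(x))$ at all, but rather $\pm\gamma'_{\tilde{w}}(\sigma(x))$, where
\[
\gamma'_{w}(y) := [ywaaa][ywaba][ywba] - [ywbaa][ywbba][ywaa]
\]
is a genuinely different algebraic quantity. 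Comparing products of $\gamma'$'s with products of $\gamma$'s (so as to cancel against $E(\sigma(x))$ in the denominator) requires the nontrivial identities
\[
\frac{\gamma'_{aa}\gamma'_{ab}}{\gamma_{aa}\gamma_{ab}} = -\frac{U_a}{2+U_b},
\qquad
\frac{\gamma'_{ba}\gamma'_{bb}}{\gamma_{ba}\gamma_{bb}} = -\frac{U_b}{2+U_a},
\]
which the paper establishes alongside the ratios $\frac{2+U_a+U_b}{2\pm U_a\pm U_b}$ you do cite, and which supply extra multiplicative factors that are not mere signs. Your sketch never mentions $\gamma'_w$ and therefore never accounts for these factors, so the ``accumulated sign'' claim is not justified. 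Moreover, your description of where $p_ap_b$ comes from is circular: $\sigma(E(x))^2$ determines $\sigma(E(x))$ only up to sign, so you cannot read the correct sign off a choice of how to factor the square; you must compute $\sigma(E(x))$ itself, and that is exactly where the $\gamma'$ products arise and need to be handled. To make your argument rigorous, introduce $\gamma'_w$, prove that $\sigma$ sends $\gamma_{t'a}\gamma_{t'b}$ to $\pm\gamma_{\tilde{t}'a}\gamma_{\tilde{t}'b}$ when $V(t)=p_t$ is even and to $\pm\gamma'_{\tilde{t}'a}\gamma'_{\tilde{t}'b}$ when it is odd, establish the ratio identities above, and then carry out the four-way case analysis on $(p_a,p_b)$ — at which point you will have rederived the paper's proof.
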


\begin{proof}
\textbf{Step 1}.
We begin with several technical identities needed to prove equation~\eqref{eq:R32embed}.
For every node $x\in\Orb_f^-(x_0)$, word $w\in\{a,b\}^2$, and symbol $t\in\{a,b\}$,
define the quantities $\gamma_w:=\gamma_w(x)$ and $U_t:=U_t(x)$ as in
equations~\eqref{eq:gammadef} and~\eqref{eq:Udef}. Further define
\[ \gamma'_w = \gamma'_{w}(x) := [xw aaa] [xw aba] [xw ba] - [xw baa] [xw bba] [xw aa] .\]
Then, borrowing the notation of Figure~\ref{fig:T5special}, we have
\begin{align*}
\frac{\gamma'_{aa} \gamma'_{ab}}{\gamma_{aa} \gamma_{ab}} &=
\frac{\gamma'_{aa} \gamma_{aa} \gamma'_{ab}\gamma_{ab}}{\gamma_{aa}^2\gamma_{ab}^2}
= \frac{[ (s_{11} s_{12} t_{12})^2 - (s_{13} s_{14} t_{11})^2 ] 
[ (s_{21} s_{22} t_{22})^2 - (s_{23} s_{24} t_{21})^2 ]} { 8(c^2+c+1) (2 + U_b) }
\\
&= \frac{\prod_{i=1}^2[(f^2(0) - u_i)(-c-u_i) - (f^2(0) + u_i)(-c+u_i)]}
{ 8(c^2+c+1) (2 + U_b) }
\\
&= \frac{ [2 (c-f^2(0))u_1] [2 (c-f^2(0))u_2]}{ 8(c^2+c+1) (2 + U_b) }
= \frac{4 c^4 u_1 u_2 }{ 8(c^2+c+1) (2 + U_b) }
= - \frac{U_a }{2 + U_b},
\end{align*}
by equations~\eqref{eq:ccubic} and \eqref{eq:g1g2}, along with Proposition~\ref{prop:key}.
Similarly, we also have
\[ \frac{\gamma'_{ba} \gamma'_{bb}}{\gamma_{ba} \gamma_{bb}} = - \frac{ U_b }{2 + U_a} .\]

Proposition~\ref{prop:key} and equation~\eqref{eq:ccubic} also give us
\begin{align*}
(2-U_a-U_b)(2+U_a+U_b) &= 4 - c^2 (u_1^2 u_2^2 + u_3^2 u_4^2)  - 2 U_a U_b
\\
&= 4 - c^2\big( (f^2(0)- [xa]) + (f^2(0)- [xb]) \big) - 2 U_a U_b 
\\
&= 4 - 2c^2 f^2(0) - 2 U_a U_b =  - 2 U_a U_b,
\end{align*}
using the fact that $[xb]=-[xa]$. Thus, we have
\[ \frac{2 + U_a + U_b}{2  -U_a - U_b} =
\frac{(2 + U_a + U_b)^2}{(2  -U_a - U_b)(2+U_a+U_b)}
= \frac{2(2+U_a)(2+U_b)}{-2U_a U_b} = - \frac{(2+U_a)(2+U_b)}{U_a U_b} , \]
by equation~\eqref{eq:bigdenom}. We also have
\begin{align*}
(2+U_a-U_b)& (2+U_a+U_b) = (2+U_a)^2 - c^2 u_3^2 u_4^2 = (2+U_a)^2 - c^2 (f^2(0)-[xb])
\\
&= (2+U_a)^2 - c^2 \big( f^2(0) + [xa] \big)
= (2+U_a)^2 - c^2 \big( f^2(0) + f^2(0) - u_1^2 u_2^2 \big)
\\
&= 4 -2c^2f^2(0) + 4U_a + 2U_a^2 = 4U_a + 2U_a^2 = 2U_a (2+U_a),
\end{align*}
and hence
\[ \frac{2 + U_a + U_b}{2 + U_a - U_b} =
\frac{(2 + U_a + U_b)^2}{(2  +U_a - U_b)(2+U_a+U_b)}
= \frac{2(2+U_a)(2+U_b)}{2U_a (2+U_a)} = \frac{2+U_b}{U_a} .\]
Reversing the roles of $a$ and $b$, a similar argument yields
\[ \frac{2+U_a+U_b}{2-U_a+U_b} = \frac{2+U_a}{U_b}.\]

\medskip

\textbf{Step 2}.
Given any $\sigma\in G_{\infty}$,
by Theorem~\ref{thm:PRSembed}, we have $\sigma\in M_{3,2,\infty}$.
For each $t\in\{a,b\}$, let $\tilde{t}\in\{a,b\}$ satisfy $\sigma(xt)=\sigma(x)\tilde{t}$.
Let $t'\in\{a,b\}$ be the opposite symbol of $t$ (meaning that $\{t,t'\} = \{a,b\}$),
and let $\tilde{t}'\in\{a,b\}$ be the opposite symbol of $\tilde{t}$. Then
\begin{equation}
\label{eq:Usign}
\sigma\big(U_t(x)\big) = (-1)^{V(t)} c [\sigma(xt)aa][\sigma(xt)ba] = (-1)^{V(t)} U_{\tilde{t}}( \sigma(x) ),
\end{equation}
where the parity $V(t)$ of the sign in equation~\eqref{eq:Usign} is
\[ V(t) := \Par(\sigma,xta) + \Par(\sigma,xtb) =V_{3,2}(\sigma,xt'a) = V_{3,2}(\sigma,xt'b), \]
where the second and third equalities are by Lemma~\ref{lem:R2root}.

Thus, if the sign in equation~\eqref{eq:Usign} is $+1$ (i.e., if the exponent $V(t)$ is even), then
\[ \sigma\big( \gamma_{t'a}(x) \gamma_{t'b}(x) \big)
= \pm \gamma_{\tilde{t}'a}(\sigma(x)) \gamma_{\tilde{t}'b}(\sigma(x)), \]
since $V_{3,2}(\sigma,xt'a)=V_{3,2}(\sigma,xt'b)=0$, and hence
an even number of $-$ signs will appear across the two summands that comprise each $\gamma_w(x)$.
On the other hand, if the sign in equation~\eqref{eq:Usign} is $-1$ (i.e., if the exponent $V(t)$ is odd), then
\[ \sigma\big( \gamma_{t'a}(x) \gamma_{t'b}(x) \big)
= \pm \gamma'_{\tilde{t}'a}(\sigma(x)) \gamma'_{\tilde{t}'b}(\sigma(x)), \]
since an odd number of $-$ signs will appear in each $\gamma_w(x)$.
That is, the sign that $\sigma$ applies to $U_t(x)$ determines whether the $\gamma$ terms above $xt'$
are mapped to plus-or-minus $\gamma$ terms or to plus-or-minus $\gamma'$ terms.

\medskip

\textbf{Step 3}. We now compute $\sigma(\sqrt{2})$ according to the formula~\eqref{eq:deltaprod}
for $\sqrt{2}$. In light of Step~2, we have four cases to consider.

\medskip

\textbf{Case 1}: The parities $V(a)$ and $V(b)$ in equation~\eqref{eq:Usign} are both even, i.e.,
\begin{equation}
\label{eq:parsign1a}
\Par(\sigma,xaa) + \Par(\sigma,xab) = \Par(\sigma,xba) + \Par(\sigma,xbb) = 0 \in\ZZ/2\ZZ .
\end{equation}
In formula~\eqref{eq:R2def} defining $R_{3,2}(\sigma,x)$, then,
the four terms $\Par(\sigma,xw)$ sum to $0$, while the product on the second line
of that formula is also $0$. It remains to consider the twelve terms
$\Par(\sigma,xwb)$ and $\Par(\sigma,xwat)$.

Moreover, as discussed in Step~2,
$\sigma$ maps each $\gamma_w(x)$ to $\pm\gamma_{\tilde{w}}(\sigma(x))$
(where $\sigma(xw)=\sigma(x)\tilde{w}$). In fact, the sign with which 
$\sigma$ maps $\gamma_w(x)$ is the sign with which it maps either one of the terms
in the sum defining $\gamma_w$, namely
\begin{equation}
\label{eq:parsign1b}
\Par(\sigma,xwaa) + \Par(\sigma,xwab) + \Par(\sigma,xwb) .
\end{equation}
Thus, $\sigma(\prod_w \gamma_w) = \pm \prod_w \gamma_w$, where the product
is over all $w\in\{a,b\}^2$, and the parity of the $\pm$ sign is the sum
of expression~\eqref{eq:parsign1b} across all four such words $w$.
But that sum is precisely the sum of the remaining twelve terms of $R_{3,2}(\sigma,x)$.
Combining these observations with the $+$ signs in equation~\eqref{eq:Usign},
it follows that the image of expression~\eqref{eq:deltaprod} under $\sigma$
is $(-1)^{R_{3,2}(\sigma,x)}\sqrt{2}$, as desired.

\medskip

\textbf{Case 2}: The parities in equation~\eqref{eq:Usign} are $V(a)=0$ and $V(b)=1$, i.e.,
\begin{equation}
\label{eq:parsign2a}
\Par(\sigma,xaa) + \Par(\sigma,xab) = 0, \quad \Par(\sigma,xba) + \Par(\sigma,xbb) = 1 \in\ZZ/2\ZZ .
\end{equation}
In formula~\eqref{eq:R2def} defining $R_{3,2}(\sigma,x)$, then,
the four terms $\Par(\sigma,xw)$ sum to $1$, while the product on the second line
of that formula is~$0$.

Because $V(a)=0$, Step~2 shows that $\sigma$ maps $\gamma_{ba}(x)\gamma_{bb}(x)$ to 
$\pm\gamma_{\tilde{b}a}(\sigma(x))\gamma_{\tilde{b}b}(\sigma(x))$, with the parity of the $\pm$ sign
being the sum of expression~\eqref{eq:parsign1b} for $w=ba$ and $w=bb$.
On the other hand, because $V(b)=1$,
Step~2 shows that $\sigma$ maps $\gamma_{aa}(x)\gamma_{ab}(x)$ to 
$\pm\gamma'_{\tilde{a}a}(\sigma(x))\gamma'_{\tilde{a}b}(\sigma(x))$.
This time, the parity of the $\pm$ sign on each individual $\gamma_w\mapsto\gamma'_{\tilde{w}}$ is
\begin{equation}
\label{eq:parsign2b}
\Par(\sigma,xw) + \Par(\sigma,xwaa) + \Par(\sigma,xwab) + \Par(\sigma,xwb),
\end{equation}
where the extra $\Par(\sigma,xw)$ term (as compared with expression~\eqref{eq:parsign1b})
is because swapping the two terms comprising $\gamma'$ introduces a factor of $-1$.

Thus, $\sigma(\gamma_{aa}\gamma_{ab}\gamma_{ba}\gamma_{bb})
= \pm \gamma'_{\tilde{a}a}\gamma'_{\tilde{a}b}\gamma_{\tilde{b}a}\gamma_{\tilde{b}b}$,
where the parity of the sign  is the sum of expression~\eqref{eq:parsign1b} for $w=ba,bb$
plus the sum of expression~\eqref{eq:parsign2b} for $w=aa,ab$.
By equations~\eqref{eq:parsign2a}, this sum is off by $1$ from $R_{3,2}(\sigma,x)$.
On the other hand, equations~\eqref{eq:Usign} and~\eqref{eq:parsign2a} dictate that
$\sigma(U_a)=U_{\tilde{a}}$ but $\sigma(U_b)=-U_{\tilde{b}}$.
We observe that
\begin{multline*}
\frac{ \gamma'_{\tilde{a}a} \gamma'_{\tilde{a}b} \gamma_{\tilde{b}a} \gamma_{\tilde{b}b} }
{ 4(c^2+c+1) (2+U_{\tilde{a}} - U_{\tilde{b}} )} \cdot
\frac{4(c^2+c+1) (2+U_{a} + U_{b})}
{\gamma_{aa}\gamma_{ab}\gamma_{ba}\gamma_{bb}}
\\
= \frac{\gamma'_{\tilde{a}a} \gamma'_{\tilde{a}b} }{\gamma_{\tilde{a}a} \gamma_{\tilde{a}b}}
\cdot \frac{2 + U_{\tilde{a}} + U_{\tilde{b}}}{2+U_{\tilde{a}} - U_{\tilde{b}}}
= \bigg( - \frac{U_{\tilde{a}}}{2 + U_{\tilde{b}} } \bigg) \cdot \bigg( \frac{2 + U_{\tilde{b}} }{U_{\tilde{a}}} \bigg)
=-1,
\end{multline*}
by the identities from Step~1.
Once again, then, the image
of expression~\eqref{eq:deltaprod} under $\sigma$ is $(-1)^{R_{3,2}(\sigma,x)}\sqrt{2}$.

\medskip

\textbf{Case 3}: The parities in equation~\eqref{eq:Usign} are $V(a)=1$ and $V(b)=0$.
This case is the same as Case~2 with the roles of $a$ and $b$ reversed.

\medskip

\textbf{Case 4}: The parities $V(a)$ and $V(b)$ in equation~\eqref{eq:Usign} are both odd, i.e.,
\begin{equation}
\label{eq:parsign4a}
\Par(\sigma,xaa) + \Par(\sigma,xab) = \Par(\sigma,xba) + \Par(\sigma,xbb) = 1 \in\ZZ/2\ZZ .
\end{equation}
In formula~\eqref{eq:R2def} defining $R_{3,2}(\sigma,x)$, then,
the four terms $\Par(\sigma,xw)$ sum to $0$, while the product on the second line
of that formula is $1$.

Because $V(a)=V(b)=1$, $\sigma$ maps each $\gamma_w(x)$ to $\pm\gamma'_{\tilde{w}}(\sigma(x))$,
where the parity of the $\pm$ sign is given by expression~\eqref{eq:parsign2b},
for the reasons discussed in Case~2.
Summing these expressions together is off from $R_{3,2}(\sigma,x)$ by $1$,
because of the aforementioned expression on the second line of formula~\eqref{eq:R2def}.

At the same time, equations~\eqref{eq:Usign} and~\eqref{eq:parsign4a} dictate that
$\sigma(U_a)=-U_{\tilde{a}}$ and $\sigma(U_b)=-U_{\tilde{b}}$.
The relevant quotient is therefore
\begin{multline*}
\frac{ \gamma'_{\tilde{a}a} \gamma'_{\tilde{a}b} \gamma'_{\tilde{b}a} \gamma'_{\tilde{b}b} }
{ 4(c^2+c+1) (2-U_{\tilde{a}} - U_{\tilde{b}} )} \cdot
\frac{4(c^2+c+1) (2+U_{a} + U_{b})}
{\gamma_{aa}\gamma_{ab}\gamma_{ba}\gamma_{bb}}
= \frac{\gamma'_{aa} \gamma'_{ab} }{\gamma_{aa} \gamma_{ab}}
\cdot \frac{\gamma'_{ba} \gamma'_{bb} }{\gamma_{ba} \gamma_{bb}}
\cdot \frac{2 + U_a + U_b}{2 - U_a - U_b}
\\
= \bigg( - \frac{U_a}{2 + U_b} \bigg) \cdot \bigg( - \frac{U_b}{2 + U_a} \bigg)
 \cdot \bigg( - \frac{(2 + U_a)(2+U_b)}{U_a U_b} \bigg)
=-1,
\end{multline*}
by the identities from Step~1.
Yet again, then, the image
of expression~\eqref{eq:deltaprod} under $\sigma$ is $(-1)^{R_{3,2}(\sigma,x)}\sqrt{2}$,
as desired.

\medskip

\textbf{Step 4}.
Having verified equation~\eqref{eq:R32embed} for every $\sigma\in G_{\infty}$,
it follows that $R_{3,2}(\sigma,x_1)=R_{3,2}(\sigma,x_2)$ for all nodes $x_1$, $x_2$ of the tree,
and hence that $\sigma\in \tilde{M}_{3,2,\infty}$.
(Recall that we already observed that $\sigma\in M_{3,2,\infty}$
because of its action on $\zeta_4$, by Theorem~\ref{thm:PRSembed}.)

Finally, if $\zeta_4,\sqrt{2}\in K$, then $\sigma$ must fix both of these elements,
and hence $\sigma\in B_{3,2,\infty}$ with $R_{3,2}(\sigma)=0$.
That is, $\sigma\in \tilde{B}_{3,2,\infty}$.
\end{proof}

\section{The non-Chebyshev short-tail strictly preperiodic case ($s=1$ and $r\geq 3$)}
\label{sec:shorttail}

We now turn to the case that $r>s=1$. That is,
throughout this section, we suppose that $f(z)=z^2+c$ such that
$0$ is not periodic under $f$, but $f^r(0)=-f(0)$ for minimal $r>1$.
We will usually further assume that $r\geq 3$.

\subsection{A square root arising in backwards orbits}

\begin{lemma}
\label{lem:root2}
Let $x\in\Kbar$,
let $\pm y$ be its two preimages under $f$,
let $\pm w_1$ be the two preimages of $y$ under $f$,
and let $\pm w_2$ be the two preimages of $-y$ under $f$.
Setting $m=2^{r-2}$, write
\[ f^{-(r-1)}(w_1) = \{ \pm \alpha_1,\ldots,\pm\alpha_m \} 
\quad\text{and}\quad
f^{-(r-1)}(-w_1) = \{ \pm \alpha'_1,\ldots,\pm\alpha'_m \} . \]
Then
\begin{equation}
\label{eq:prodshorttail}
\prod_{i=1}^m \alpha_i \prod_{i=1}^m \alpha'_i  = \pm w_2 .
\end{equation}
Moreover, if the expression in equation~\eqref{eq:prodshorttail} equals $w_2$,
and if $f^{r-1}(0)\neq -w_2$, then writing
\[ f^{-(r-1)}(-w_2) = \{ \pm \beta_1,\ldots,\pm\beta_m \} ,\]
we have $\gamma^2=2$, where
\begin{equation}
\label{eq:gammashorttail}
\gamma = \frac{\alpha_1 \cdots \alpha_m + \alpha'_1 \cdots \alpha'_m}{\beta_1\cdots \beta_m} .
\end{equation}
\end{lemma}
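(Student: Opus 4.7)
The plan is to apply Proposition~\ref{prop:key} to each of the products $\prod_i \alpha_i$, $\prod_i \alpha'_i$, and $\prod_i \beta_i$ individually, and then combine the resulting identities using the defining relations $w_1^2 = y - c$, $w_2^2 = -y - c$ (from $f(\pm w_1) = y$ and $f(\pm w_2) = -y$) together with the hypothesis $f^r(0) = -c$, which comes from $s = 1$ (so $f^r(0) = -f(0) = -c$).

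For the first assertion, $\{\pm\alpha_1,\ldots,\pm\alpha_m\}$ is the full list of $2^{r-1}$ roots of $f^{r-1}(z) = w_1$, and likewise for the $\alpha'_i$ with $-w_1$. Since $r - 1 \geq 2$, Proposition~\ref{prop:key} gives
\[
\Bigl(\prod_{i=1}^m \alpha_i\Bigr)^{\!2} = f^{r-1}(0) - w_1
\qquad\text{and}\qquad
\Bigl(\prod_{i=1}^m \alpha'_i\Bigr)^{\!2} = f^{r-1}(0) + w_1.
\]
Multiplying these and using $(f^{r-1}(0))^2 = f^r(0) - c$ together with $w_1^2 = y - c$, I get
\[
\Bigl(\prod_i \alpha_i \prod_i \alpha'_i\Bigr)^{\!2} = (f^{r-1}(0))^2 - w_1^2 = f^r(0) - y = -c - y = w_2^2,
\]
where the penultimate equality is where $f^r(0) = -c$ is used. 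Taking square roots gives $\prod_i \alpha_i \prod_i \alpha'_i = \pm w_2$.

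Now assume this product equals $+w_2$, as in the hypothesis for the second assertion. Expanding the square of the numerator of $\gamma$ and using the two identities above yields
\[
\Bigl(\prod_i \alpha_i + \prod_i \alpha'_i\Bigr)^{\!2} = \bigl(f^{r-1}(0) - w_1\bigr) + 2w_2 + \bigl(f^{r-1}(0) + w_1\bigr) = 2\bigl(f^{r-1}(0) + w_2\bigr).
\]
Proposition~\ref{prop:key} applied to $\{\pm\beta_1,\ldots,\pm\beta_m\} = f^{-(r-1)}(-w_2)$ gives $(\prod_i \beta_i)^2 = f^{r-1}(0) + w_2$, which is nonzero by the assumption $f^{r-1}(0) \neq -w_2$. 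Dividing yields $\gamma^2 = 2$.

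The calculations are entirely mechanical once preimages are handled through Proposition~\ref{prop:key}, so there is no real obstacle. The only delicate point is seeing where the condition $s = 1$ is used: without $f^r(0) = -f(0)$, the quantity $f^r(0) - y$ would not equal $w_2^2 = -y - c$, and the square-root identity for the first assertion would fail. The nondegeneracy hypothesis $f^{r-1}(0) \neq -w_2$ is needed precisely to ensure the denominator $\prod_i \beta_i$ in~\eqref{eq:gammashorttail} is nonzero.
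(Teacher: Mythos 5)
Your proof is correct and uses the same strategy as the paper: apply Proposition~\ref{prop:key} to the relevant preimage sets and simplify using $f^r(0)=-f(0)$, with the hypothesis $f^{r-1}(0)\neq -w_2$ guaranteeing the denominator $\prod_i\beta_i$ is nonzero. The only cosmetic difference is that for equation~\eqref{eq:prodshorttail} the paper applies Proposition~\ref{prop:key} once at depth $r$, noting that $\{\pm\alpha_i\}\cup\{\pm\alpha'_i\}=f^{-r}(y)$, to obtain $(\prod\alpha_i\prod\alpha'_i)^2=f^r(0)-y$ in one step, whereas you apply it twice at depth $r-1$ and then simplify $(f^{r-1}(0))^2-w_1^2$; the two routes agree, and your treatment of the second assertion is identical to the paper's.
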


\begin{proof}
By Proposition~\ref{prop:key} and the fact that $f^r(0)=-f(0)$, we have
\[ \big( \alpha_1 \cdots \alpha_{m} \alpha'_1 \cdots \alpha'_{m} \big)^2
= f^{r}(0) - y = -y - f(0) = w_2^2 . \]
Equation~\eqref{eq:prodshorttail} follows immediately.

Assume for the remainder of the proof that the product in equation~\eqref{eq:prodshorttail}
is $w_2$. Let $\gamma\in K$ be the expression in
equation~\eqref{eq:gammashorttail}. Then by Proposition~\ref{prop:key} again,
we have
\[ \gamma^2 = \frac{(f^{r-1}(0) - w_1) + 2w_2 + (f^{r-1}(0) + w_1)}
{f^{r-1}(0) + w_2}
= \frac{2(f^{r-1}(0)+w_2)}{f^{r-1}(0) + w_2} = 2. \qedhere \]
\end{proof}

\begin{lemma}
\label{lem:pickroot2}
Let $x_0\in K$ not in the forward orbit of $0$,
and fix a choice of $\sqrt{2}\in\Kbar$.
Given a labeling of the tree $T_{\infty}$ of preimages $\Orb_f^-(x_0)$,
then for every node $y$ of the tree, define
\[ E_y = \prod_{w'\in\{a,b\}^{r-2}}[yw'a] \in \Kbar. \]
It is possible to label the tree in such a way that
for every node $x$ of the tree, we have
\begin{equation}\label{eq:shorttaillock}
E_{xaa}E_{xab} = [xba]
\quad\text{and}\quad
E_{xba}E_{xbb} = [xaa],
\end{equation}
and in addition,
\begin{equation}
\label{eq:root2prod}
\frac{E_{xaa}+E_{xab}}{E_{xbb}} =
\frac{E_{xab}-E_{xaa}}{E_{xba}} =
\frac{E_{xba}+E_{xbb}}{E_{xab}} =
\frac{E_{xbb}-E_{xba}}{E_{xaa}} = \sqrt{2} .
\end{equation}
\end{lemma}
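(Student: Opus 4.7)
I will construct the labeling inductively, level by level, using the freedom to swap pairs of sibling labels at the top level to fix signs. Note that the conditions \eqref{eq:shorttaillock} and \eqref{eq:root2prod} at a node $x$ of level $\ell$ depend only on the values $[xaa],[xba]$ at level $\ell+2$ and the four products $E_{xaa},E_{xab},E_{xba},E_{xbb}$, each a product of $2^{r-2}$ values at level $\ell+r+1$; thus once the tree has been labeled through level $N$, the identities at every node $x$ with $|x|\le N-r-1$ are meaningful. For the base case, label levels $0,\dots,r$ arbitrarily (nothing to enforce yet). For the inductive step, assume $T_{N-1}$ has been labeled for some $N\ge r+1$ so that the identities hold at every node of level $\le N-r-2$, and extend arbitrarily to level $N$; the task is then to adjust the level-$N$ labels to force the identities at each node of level $N-r-1$.

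Fix such an $x$ of level $N-r-1$. Swapping the two children at level $N$ of any level-$(N-1)$ node lying in the subtree above $xaa$ replaces exactly one factor in the product $E_{xaa}$ by its negative while leaving $E_{xab},E_{xba},E_{xbb}$ unchanged, so it multiplies $E_{xaa}$ by $-1$; analogous swaps over $xab,xba,xbb$ give four independent sign flips $\epsilon_1,\epsilon_2,\epsilon_3,\epsilon_4\in\{\pm 1\}$. These operations act only at level $N$, so they do not disturb any identity at a strict ancestor of $x$; and since distinct nodes at level $N-r-1$ correspond to disjoint subtrees, they may be handled independently.

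By Proposition~\ref{prop:key} (applied with $m=r-1$) together with Lemma~\ref{lem:root2}, one has $E_{xaa}E_{xab}=\pm[xba]$ and $E_{xba}E_{xbb}=\pm[xaa]$; two suitable sign flips among $\epsilon_1,\dots,\epsilon_4$ secure \eqref{eq:shorttaillock}. Once this holds, Lemma~\ref{lem:root2} says that each of the four ratios $\gamma_1,\gamma_2,\gamma_3,\gamma_4$ in \eqref{eq:root2prod} equals $\pm\sqrt{2}$. The crux of the argument is that these four signs coincide. First, $\gamma_1\gamma_2=2$ follows from $(E_{xaa}+E_{xab})(E_{xab}-E_{xaa})=E_{xab}^2-E_{xaa}^2=2[xaa]$ via Proposition~\ref{prop:key} and $[xab]=-[xaa]$, combined with $E_{xba}E_{xbb}=[xaa]$; a symmetric computation gives $\gamma_3\gamma_4=2$. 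To show $\gamma_1=\gamma_3$, I would multiply the first ratio in \eqref{eq:root2prod} by $E_{xab}$ and the third by $E_{xbb}$: both sides then collapse, via \eqref{eq:shorttaillock} and Proposition~\ref{prop:key}, to the common quantity $[xaa]+[xba]+f^{r-1}(0)$. Hence all four $\gamma_i$ share a common sign, which may be flipped by replacing $(\epsilon_1,\epsilon_2)$ with $(-\epsilon_1,-\epsilon_2)$ (this preserves the product $\epsilon_1\epsilon_2$ and hence \eqref{eq:shorttaillock}, but flips every $\gamma_i$).

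The main obstacle is precisely this forced coincidence of the four $\sqrt{2}$-signs: a priori there are six identities and only four sign bits, so without the collapse one could not hope to achieve everything simultaneously. The collapse step in turn requires the common value $[xaa]+[xba]+f^{r-1}(0)$ to be nonzero; if instead $[xaa]+[xba]=-f^{r-1}(0)$, squaring and using $[xaa]^2+[xba]^2=-2c$ (from Proposition~\ref{prop:key} with $[xab]=-[xaa]$ and $[xbb]=-[xba]$) together with $(f^{r-1}(0))^2=f^r(0)-c=-2c$ (which holds because $s=1$ gives $f^r(0)=-f(0)=-c$) forces $[xaa][xba]=0$, contradicting the standing assumption that $x_0\notin\Orb_f^+(0)$.
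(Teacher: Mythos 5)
Your proposal is correct and follows essentially the same inductive sign-flipping argument as the paper's own proof, including the same key identities: $\gamma_1\gamma_2=\gamma_3\gamma_4=2$ and the cross-multiplication showing $\gamma_1=\gamma_3$ via the common numerator $[xaa]+[xba]+f^{r-1}(0)$. Two minor remarks: the phrase "multiply the first ratio by $E_{xab}$" should really be "multiply both $\gamma_1$ and $\gamma_3$ by $E_{xab}E_{xbb}$" to make the comparison precise; and the final paragraph's nondegeneracy check is technically redundant, since $[xaa]+[xba]+f^{r-1}(0)=\gamma_1 E_{xab}E_{xbb}\ne 0$ already follows from $\gamma_1=\pm\sqrt{2}\ne 0$ and the nonvanishing of the $E$'s.
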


\begin{proof}
We will label the tree inductively, starting from the root point $x_0$.
Label the tree (using the symbols $a$ and $b$) arbitrarily up to level $r$.

For each successive $n\geq r+1$,
suppose that we have labeled $T_{n-1}$ in such a way that
for every node $x$ of $T_{n-1}$ up to level $n-r-2$,
the desired identities hold.
(This condition is vacuously true for $n = r+1$.)

For each node $y$ at level $n-1$, label the two points
of $f^{-1}(y)$ arbitrarily as $ya$ and $yb$. We will now adjust these labels,
if necessary.

For each node $x$ at level $n-r-1$, we have
\[ E_{xaa}E_{xab} = \pm [xba]
\quad\text{and}\quad
E_{xba}E_{xbb} = \pm [xaa], \]
by equation~\eqref{eq:prodshorttail} of Lemma~\ref{lem:root2}.
If the first is $-[xba]=[xbb]$, then exchange the labels $ya$ and $yb$ above a single
node $y\in f^{-(r-1)}([xa])$, and the adjusted labeling then satisfies
$E_{xaa}E_{xab} = [xba]$.
If the second is $-[xaa]=[xab]$, make a similar swap above a single node
$y\in f^{-(r-1)}([xb])$.
Equation~\eqref{eq:shorttaillock} now holds for $x$.

Still for the same node $x$ at level $n-r-1$,
having made these adjustments, call the first four quantities in
equation~\eqref{eq:root2prod} $A_1,A_2,B_1,B_2$, respectively.
By equation~\eqref{eq:gammashorttail} of Lemma~\ref{lem:root2}, each is $\pm\sqrt{2}$.
(The negative sign in the numerator of $A_2$ is there because
$E_{xaa} E_{xab}=[xba]=-[xbb]$, so to get the desired value $[xbb]$ on the right side
of equation~\eqref{eq:prodshorttail}, we replace $\alpha'_1 = [xabaa\cdots a]$ by its negative.
Similar reasoning yields the negative sign in the numerator of $B_2$.)
By Proposition~\ref{prop:key}, we have
\[ A_1 A_2 = \frac{ E_{xab}^2 - E_{xaa}^2}{E_{xba} E_{xbb}}
= \frac{(f^{r-1}(0) - [xab]) - (f^{r-1}(0) - [xaa])}{[xaa]}
= \frac{[xaa] + [xaa]}{[xaa]} = 2,\]
since $[xab]=-[xaa]\neq 0$. Since both are square roots of $2$,
it follows that $A_1=A_2$. Similarly, $B_1=B_2$.
We also have
\begin{align*}
\frac{A_1}{B_1} &= \frac{E_{xaa}E_{xab} + E_{xab}^2}{E_{xba}E_{xbb} + E_{xbb}^2}
= \frac{[xba] + (f^{r-1}(0) - [xab])}{[xaa] + (f^{r-1}(0) - [xbb])}
\\
&= \frac{f^{r-1}(0) + [xaa] + [xba]}{f^{r-1}(0) + [xba] + [xaa]} = 1,
\end{align*}
where the second equality is by Proposition~\ref{prop:key}
and (the now verified) equation~\eqref{eq:shorttaillock},
and the third equality is because $[xab]=-[xaa]$ and $[xbb]=-[xba]$.
Thus, we have $A_1=A_2=B_1=B_2=\pm \sqrt{2}$.

If this common value is $-\sqrt{2}$,
then pick $y_1\in f^{-(r-2)}([xaa])$,
and exchange the labels of $y_1 a$ and $y_1 b$.
Also pick $y_2\in f^{-(r-2)}([xab])$,
and exchange the labels of $y_2 a$ and $y_2 b$.
These two switches have the effect of sending each of $E_{xaa}$
and $E_{xab}$ to its negative. Thus, equation~\eqref{eq:shorttaillock}
remains true, but each of $A_1,A_2,B_1,B_2$ is replaced by its negative,
i.e., equation~\eqref{eq:root2prod} holds.

Having made these (possible) label changes to various nodes at the $n$-th level
of the tree, we have labeled $T_n$ so that for every node $x$
at every level $0\leq \ell \leq n-r-1$ of $T_n$,
equations~\eqref{eq:shorttaillock} and~\eqref{eq:root2prod} hold.
Thus, our inductive construction of the desired labeling is complete.
\end{proof}

\subsection{A preliminary result regarding the associated arboreal subgroup}
Recall the functions $P_{r,1}^a$ and $P_{r,1}^b$ 
and the groups $M_{r,1,\infty}$ and $B_{r,1,\infty}$
from Definition~\ref{def:MRSiroot}, with $s=1$.
In particular, we have
\[ P^a_{r,1}(\sigma,x) := 
\Par(\sigma,xb) + \sum_{w\in\{a,b\}^{r-1}} \Par(\sigma,xaw) \in \ZZ/2\ZZ \]
and
\[ P^b_{r,1}(\sigma,x) := 
\Par(\sigma,xa) + \sum_{w\in\{a,b\}^{r-1}} \Par(\sigma,xbw) \in \ZZ/2\ZZ .\]
As in previous sections, Lemma~\ref{lem:pickroot2} inspires the following definition.

\begin{defin}
\label{def:MRroot2}
Fix a labeling $a,b$ of $T_{\infty}$, and fix an integer $r\geq 3$.
For any word $x$ in the symbols $\{a,b\}$ and any $\sigma\in\Aut(T_{\infty})$,
define $R_{r,1}(\sigma,x) \in \ZZ/2\ZZ$ by
\[ R_{r,1}(\sigma,x) := \Par(\sigma,xa)\Par(\sigma,xb) + 
\sum_{w\in\{a,b\}^{r-2}} \Big( \Par\big(\sigma,xabw\big)
+ \Par\big(\sigma,xbbw\big) \Big), \]
and define $\tilde{M}_{r,1,\infty}$ to be the set of all $\sigma\in B_{r,1,\infty}$ for which
\begin{equation}
\label{eq:PR1def2}
R_{r,1}(\sigma,x_1) = R_{r,1}(\sigma,x_2) \in \ZZ/2\ZZ 
\end{equation}
for all nodes $x_1,x_2$ of $T_{\infty}$.
For $\sigma\in \tilde{M}_{r,1,\infty}$, define $R_{r,1}(\sigma)\in\ZZ/2\ZZ$ to be this common value
of $R_{r,1}(\sigma,\cdot)$.
Define $\tilde{B}_{r,1,\infty} := \{\sigma\in \tilde{M}_{r,1,\infty} : R_{r,1}(\sigma)=0 \}$.
\end{defin}

\begin{thm}
\label{thm:MRroot2}
Fix an integer $r\geq 3$. Then $\tilde{M}_{r,1,\infty}$ and $\tilde{B}_{r,1,\infty}$ are subgroups
of $\Aut(T_{\infty})$. Moreover, $R_{r,1}:\tilde{M}_{r,1,\infty}\to\ZZ/2\ZZ$
is a homomorphism with kernel $\tilde{B}_{r,1,\infty}$.
\end{thm}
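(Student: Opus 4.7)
The plan is to mirror the structure of the proofs of Theorem~\ref{thm:MRSiroot} and Theorem~\ref{thm:R2root}: establish a cocycle-type identity for $R_{r,1}$, and then invoke Proposition~\ref{prop:Pgroup}. Since $B_{r,1,\infty}$ is already known to be a subgroup of $\Aut(T_{\infty})$ by Theorem~\ref{thm:MRSiroot}, I would apply Proposition~\ref{prop:Pgroup} with $\Gamma = B_{r,1,\infty}$, $X$ the set of nodes of $T_{\infty}$, $H = H_0 = \ZZ/2\ZZ$, and $P = R_{r,1}$. The identity $e \in \tilde{M}_{r,1,\infty}$ with $R_{r,1}(e) = 0$ is immediate from $\Par(e,\cdot) = 0$, so everything reduces to proving that
\[
R_{r,1}(\sigma\tau, x) = R_{r,1}(\sigma,\tau(x)) + R_{r,1}(\tau,x)
\]
for every $\sigma \in \tilde{M}_{r,1,\infty}$, $\tau \in B_{r,1,\infty}$, and node $x$.

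To attack this identity I would first expand each $\Par(\sigma\tau,y)$ on the left via equation~\eqref{eq:sgn22} as $\Par(\sigma,\tau(y)) + \Par(\tau,y)$, and then split the analysis according to the three parities $\Par(\tau,x)$, $\Par(\tau,xa)$, $\Par(\tau,xb)$, which govern whether $\tau(xa)$ equals $\tau(x)a$ or $\tau(x)b$, and similarly one level up. In the pure sum part $\sum_{w \in \{a,b\}^{r-2}} (\Par(\sigma\tau,xabw) + \Par(\sigma\tau,xbbw))$, pushing $\tau$ through the inner levels transforms each summand into $\Par(\sigma,\tau(xab)w') + \Par(\sigma,\tau(xbb)w')$ for a permuted word $w'$, and since the sum is over all $w \in \{a,b\}^{r-2}$, this permutation can be absorbed. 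The upshot is that the summed half of $R_{r,1}(\sigma,\tau(x))$ will match up with one of four possible sums over nodes above $\tau(x)ab$, $\tau(x)aa$, $\tau(x)bb$, or $\tau(x)ba$, depending on the three parities above.

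The main obstacle will be showing that everything reconciles correctly. There are two distinct issues. First, the quadratic term $\Par(\sigma\tau,xa)\Par(\sigma\tau,xb)$, when expanded, produces two genuinely new cross-terms
\[
\Par(\sigma,\tau(xa))\Par(\tau,xb) + \Par(\tau,xa)\Par(\sigma,\tau(xb)),
\]
which do not appear in $R_{r,1}(\sigma,\tau(x)) + R_{r,1}(\tau,x)$. Second, whenever $\Par(\tau,xa) = 1$ or $\Par(\tau,xb) = 1$, the pushed-through sum lands over nodes above $\tau(x)aa$ or $\tau(x)ba$ instead of $\tau(x)ab$ or $\tau(x)bb$. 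To convert between these, I would use the defining relations $P^a_{r,1}(\sigma,\tau(x)) = P^b_{r,1}(\sigma,\tau(x)) = 0$ coming from $\sigma \in B_{r,1,\infty}$, which give
\[
\sum_{w \in \{a,b\}^{r-2}} \Par(\sigma,\tau(x)aaw) = \Par(\sigma,\tau(x)b) + \sum_{w \in \{a,b\}^{r-2}} \Par(\sigma,\tau(x)abw),
\]
and the analogous identity interchanging $a$ and $b$. The correction terms $\Par(\sigma,\tau(x)a)$ and $\Par(\sigma,\tau(x)b)$ introduced by these substitutions are exactly what is needed to cancel the two cross-terms from the quadratic expansion; verifying this cancellation in each of the four cases (indexed by the pair $(\Par(\tau,xa),\Par(\tau,xb))$, and using the constancy of $P_{r,1}$ on $B_{r,1,\infty}$ to keep $\Par(\tau,x)$ in play) is the delicate bookkeeping at the heart of the argument.

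Once the identity is established, Proposition~\ref{prop:Pgroup} immediately yields that $\tilde{M}_{r,1,\infty}$ is a subgroup of $B_{r,1,\infty}$ (and hence of $\Aut(T_{\infty})$) and that $R_{r,1}\colon \tilde{M}_{r,1,\infty} \to \ZZ/2\ZZ$ is a homomorphism. By definition, $\tilde{B}_{r,1,\infty}$ is its kernel, so $\tilde{B}_{r,1,\infty}$ is also a subgroup, completing the proof.
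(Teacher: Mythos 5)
Your proposal is correct and follows essentially the same route as the paper's proof: apply Proposition~\ref{prop:Pgroup} with $\Gamma = B_{r,1,\infty}$, $P = R_{r,1}$, and $H = H_0 = \ZZ/2\ZZ$, reduce to the cocycle identity $R_{r,1}(\sigma\tau,x) = R_{r,1}(\sigma,\tau(x)) + R_{r,1}(\tau,x)$, expand $\Par(\sigma\tau,\cdot)$ via equation~\eqref{eq:sgn22}, and reconcile the shifted sums and the cross-terms from the quadratic term by invoking $P^a_{r,1}(\sigma,\tau(x)) = P^b_{r,1}(\sigma,\tau(x)) = 0$ for $\sigma \in B_{r,1,\infty}$. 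The paper packages the reconciliation step as a cleaner stand-alone lemma (its equations~\eqref{eq:PR1halfA}--\eqref{eq:PR1halfB}, with a case split only on $\Par(\tau,xa)$ rather than all three parities you mention), but the underlying cancellation mechanism is the one you identified.
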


\begin{proof}
\textbf{Step 1}. We claim that for all $\sigma\in \tilde{M}_{r,1,\infty}$,
all $\tau\in \Aut(T_\infty)$, and all nodes $x$ of the tree, we have
\begin{equation}
\label{eq:PR1halfA}
\Par(\tau,xa) \Par\big(\sigma,\tau(xb) \big)
+ \sum_{w\in \{a,b\}^{r-2}} \Par\big( \sigma,\tau(xab)w \big)
= \sum_{w\in \{a,b\}^{r-2}} \Par\big( \sigma,\tau(xa)bw \big),
\end{equation}
where we continue to work modulo 2.

If $\Par(\tau,xa)=0$, then
$\tau(xab)=\tau(xa)b$, and the first term in equation~\eqref{eq:PR1halfA} is zero,
so our claim holds trivially.

Otherwise, we have $\Par(\tau,xa)=1$, and hence $\tau(xab)=\tau(xa)a$.
Then because we are working modulo~2, the difference between the two
sides of equation~\eqref{eq:PR1halfA} is
\begin{align*}
\Par\big( \sigma,\tau(xb) \big) & +
\sum_{w\in \{a,b\}^{r-2}} \Big( \Par\big( \sigma,\tau(xa)aw \big)
+ \Par\big( \sigma,\tau(xa)bw \big) \Big)
\\
&= \Par\big( \sigma,\tau(xb) \big) + \sum_{w'\in \{a,b\}^{r-1}} \Par\big( \sigma,\tau(xa)w' \big)
\\
&= \begin{cases}
P_{r,1}^a (\sigma,\tau(x)) \text{ if } \Par(\tau,x)=0,
\\
P_{r,1}^b (\sigma,\tau(x)) \text{ if } \Par(\tau,x)=1.
\end{cases}
\end{align*}
Since $\sigma\in B_{r,1,\infty}$, this value is $0$, and hence
equation~\eqref{eq:PR1halfA} holds, proving our claim.

By similar reasoning, we also have
\begin{equation}
\label{eq:PR1halfB}
\Par(\tau,xb) \Par\big(\sigma,\tau(xa) \big)
+ \sum_{w\in \{a,b\}^{r-2}} \Par\big( \sigma,\tau(xbb)w \big)
= \sum_{w\in \{a,b\}^{r-2}} \Par\big( \sigma,\tau(xb)bw \big) .
\end{equation}

\medskip

\textbf{Step 2}. Again with $\sigma,\tau,x$ as in Step 1, we now claim that
\begin{equation}
\label{eq:PR1identKey}
R_{r,1}(\sigma) + R_{r,1}(\tau,x) = R_{r,1}(\sigma \tau,x) .
\end{equation}
Indeed, by equation~\eqref{eq:sgn22} and the definition of $R_{r,1}$, we have

\begin{align*}
R_{r,1}(\sigma\tau,x) &=
\Big( \Par\big(\sigma,\tau(xa) \big) + \Par(\tau,xa) \Big)
\Big( \Par\big(\sigma,\tau(xb) \big) + \Par(\tau,xb) \Big)
\\
& \phantom{\Par} +
\sum_{w\in\{a,b\}^{r-2}} 
\big( \Par(\tau,xabw) + \Par(\tau,xbbw) \big)
\\
& \phantom{\Par} +
\sum_{w\in\{a,b\}^{r-2}} 
\Big( \Par\big(\sigma,\tau(xabw)\big) + \Par\big(\sigma,\tau(xbbw)\big)  \Big)
\\
&= R_{r,1}(\tau,x) + \Par\big(\sigma,\tau(xa) \big) \Par\big(\sigma,\tau(xb) \big)
\\
& \phantom{\Par} +
\Par(\tau,xa) \Par\big(\sigma,\tau(xb) \big)
+ \sum_{w\in \{a,b\}^{r-2}} \Par\big( \sigma,\tau(xab)w \big)
\\
& \phantom{\Par} +
\Par(\tau,xb) \Par\big(\sigma,\tau(xa) \big)
+ \sum_{w\in \{a,b\}^{r-2}} \Par\big( \sigma,\tau(xbb)w \big)
\\
&= R_{r,1}(\tau,x) + \Par\big(\sigma,\tau(xa) \big) \Par\big(\sigma,\tau(xb) \big)
\\
& \phantom{\Par} + \sum_{w\in \{a,b\}^{r-2}}
\Big( \Par\big( \sigma,\tau(xa)bw \big) + \Par\big( \sigma,\tau(xb)bw \big) \Big)
\\
&= R_{r,1}(\tau,x) + \Par\big(\sigma,\tau(x)a \big) \Par\big(\sigma,\tau(x)b \big)
\\
& \phantom{\Par} + \sum_{w\in \{a,b\}^{r-2}}
\Big( \Par\big( \sigma,\tau(x)abw \big) + \Par\big( \sigma,\tau(x)bbw \big) \Big)
\\
&=R_{r,1}(\tau,x) +R_{r,1}\big(\sigma,\tau(x)\big),
\end{align*}
where the second equality is by grouping the terms of $R_{r,1}(\tau,x)$
and rearranging the sums; the third is by equations~\eqref{eq:PR1halfA}
and~\eqref{eq:PR1halfB}; and the fourth is by observing that the two-element
sets $\{\tau(xa),\tau(xb)\}$ and $\{\tau(x)a,\tau(x)b\}$ coincide.
Thus, because $R_{r,1}(\sigma,\tau(x))=R_{r,1}(\sigma)$, we have proven
equation~\eqref{eq:PR1identKey}.

\medskip

\textbf{Step 3}.
Once again, we will finish our proof via Proposition~\ref{prop:Pgroup}.
Let $\Gamma:=B_{r,1,\infty}$, which we know to be a group
by Theorem~\ref{thm:MRSiroot}.
Let $H=H_0:=\ZZ/2\ZZ$, and let $P(\sigma,x):=R_{r,1}(\sigma,x)$.

Clearly, the identity $e\in\Aut(T_{\infty})$ satisfies $R_{r,1}(e,x)=0$ for all nodes $x$,
so that $e\in \tilde{M}_{r,1,\infty}$, which is the group $G$ of Proposition~\ref{prop:Pgroup}.
In addition, equation~\eqref{eq:PR1identKey} provides precisely hypothesis~\eqref{eq:Phomom}.
By Proposition~\ref{prop:Pgroup}, then, $\tilde{M}_{r,1,\infty}$ is a subgroup of $B_{r,1,\infty}$,
and $R_{r,1}$ is a homomorphism.
By definition, its kernel is $\tilde{B}_{r,1,\infty}$, which is therefore a subgroup of $\tilde{M}_{r,1,\infty}$.
\end{proof}

\begin{remark}
We stated Definition~\ref{def:MRroot2} and Theorem~\ref{thm:MRroot2} for the case that $s=1$ and $r\geq 3$,
but they also work with $s=1$ and $r=2$. However, in that special case, corresponding to the Chebyshev
map $f(z)=z^2-2$, the associated arboreal Galois groups are much smaller than the groups defined
in Definition~\ref{def:MRroot2}, because many more special values of the base field arise as arithmetic
combinations of backward orbit elements.
\end{remark}

%

\subsection{The action of Galois}
As in previous sections, the following result shows that for $r\geq 3$, the group $\tilde{M}_{r,1,\infty}$
is determined solely by the restrictions of the action of Galois on equations~\eqref{eq:shorttaillock}
and~\eqref{eq:root2prod} of Lemma~\ref{lem:pickroot2}.

\begin{thm}
\label{thm:PR1embed}
Let $x_0\in K$ not in the forward orbit of $0$, and fix a choice of
$\sqrt{2}\in\Kbar$.
Label the tree $T_{\infty}$ of preimages $\Orb_f^{-}(x_0)$
as in Lemma~\ref{lem:pickroot2}.
Then for any node $x\in\Orb_f^-(x_0)$
and any $\sigma\in G_{\infty}=\Gal(K_{\infty}/K)$, we have
\begin{equation}
\label{eq:PR1embed}
P^a_{r,1}(\sigma,x) = P^b_{r,1}(\sigma,x) = 0
\quad\text{and}\quad
\sigma(\sqrt{2}) = (-1)^{R_{r,1}(\sigma,x)}\sqrt{2} .
\end{equation}
In particular, the image of $G_{\infty}$ in $\Aut(T_{\infty})$,
induced by its action on $\Orb_f^-(x_0)$ via this labeling,
is contained in $\tilde{M}_{r,1,\infty}$.
Furthermore, if $\sqrt{2}\in K$, then this Galois image
is contained in $B_{r,1,\infty}$.
\end{thm}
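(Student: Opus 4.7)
The plan is to imitate the strategy of Theorems~\ref{thm:PRSembed} and~\ref{thm:R32embed}: fix the labeling supplied by Lemma~\ref{lem:pickroot2}, apply an arbitrary $\sigma\in G_\infty$ to the algebraic identities~\eqref{eq:shorttaillock} and~\eqref{eq:root2prod}, and read off the required congruences from the resulting sign bookkeeping. The main computational tool throughout is the formula
\[ \sigma(E_y) = (-1)^{\epsilon_y} E_{\sigma(y)}, \qquad \epsilon_y := \sum_{w'\in\{a,b\}^{r-2}}\Par(\sigma, yw') \in \ZZ/2\ZZ, \]
which follows from $\sigma([ya]) = (-1)^{\Par(\sigma,y)}[\sigma(y)a]$ (because $[yb]=-[ya]$) combined with the fact that $\sigma$ permutes the subtree above $y$ onto its counterpart above $\sigma(y)$.

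To derive $P^a_{r,1}(\sigma,x)=0$, I apply $\sigma$ to the identity $E_{xaa}E_{xab}=[xba]$. The left-hand side becomes $(-1)^{\epsilon_{xaa}+\epsilon_{xab}}E_{\sigma(xaa)}E_{\sigma(xab)}$; a short case split on $\Par(\sigma,x)$ shows that $\{\sigma(xaa),\sigma(xab)\}$ is always the grandchild pair of $\sigma(x)$ sitting above $\sigma(xa)$, so this product collapses (via~\eqref{eq:shorttaillock} applied at $\sigma(x)$) to an $[\,\cdot\,]$-value on the opposite branch. Meanwhile the right-hand side is $[\sigma(xba)]=(-1)^{\Par(\sigma,xb)}[\sigma(x)\cdot\text{(matching node)}]$, and comparison forces $\epsilon_{xaa}+\epsilon_{xab}\equiv\Par(\sigma,xb)\pmod{2}$. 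Reindexing $aaw'$ and $abw'$ into $aw$ with $w\in\{a,b\}^{r-1}$, this congruence is exactly $P^a_{r,1}(\sigma,x)=0$. The symmetric argument using $E_{xba}E_{xbb}=[xaa]$ gives $P^b_{r,1}(\sigma,x)=0$, so $\sigma\in B_{r,1,\infty}$.

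For the action on $\sqrt{2}$, I apply $\sigma$ to $(E_{xaa}+E_{xab})/E_{xbb}=\sqrt{2}$, obtaining
\[ \sigma(\sqrt{2}) = \frac{(-1)^{\epsilon_{xaa}}E_{\sigma(xaa)}+(-1)^{\epsilon_{xab}}E_{\sigma(xab)}}{(-1)^{\epsilon_{xbb}}E_{\sigma(xbb)}}, \]
and analyze the four cases on $\big(\Par(\sigma,xa),\Par(\sigma,xb)\big)$; the value of $\Par(\sigma,x)$ merely relabels which branch of $\sigma(x)$ is involved. The congruence $\epsilon_{xaa}+\epsilon_{xab}\equiv\Par(\sigma,xb)$ just established dictates whether the two numerator terms carry equal or opposite signs, so that the numerator collapses via either the ``$+$'' or the ``$-$'' version of~\eqref{eq:root2prod}. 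In every case the $E_{\sigma(x)\ast}$ produced in the numerator matches the one appearing in $E_{\sigma(xbb)}$ in the denominator (after relabeling via $\Par(\sigma,xb)$), leaving only a power of $-1$ times $\sqrt{2}$, and a direct comparison of this power of $-1$ with $R_{r,1}(\sigma,x)$ yields~\eqref{eq:PR1embed}.

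The main obstacle is the bookkeeping in this last step: the product term $\Par(\sigma,xa)\Par(\sigma,xb)$ in the definition of $R_{r,1}$ contributes only when both parities equal $1$, which is precisely the case in which the numerator is rewritten using the ``$-$'' identity $(E_{yab}-E_{yaa})/E_{yba}=\sqrt{2}$ \emph{and} the denominator is simultaneously swapped to the opposite $b$-branch, producing an extra $-1$ that is invisible to the $\epsilon_\ast$ sums alone. Verifying that this extra $-1$ is captured precisely by the product term, in every combination of parities, is the delicate heart of the case analysis but otherwise mechanical. Once~\eqref{eq:PR1embed} is established, Definition~\ref{def:MRroot2} immediately gives $\sigma\in\tilde{M}_{r,1,\infty}$; and if $\sqrt{2}\in K$, then $\sigma(\sqrt{2})=\sqrt{2}$ forces $R_{r,1}(\sigma,x)=0$ for every $x$, so the image lies in $\tilde{B}_{r,1,\infty}\subseteq B_{r,1,\infty}$, as stated.
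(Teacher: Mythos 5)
Your proposal follows essentially the same two-step strategy as the paper's proof: establish $\sigma(E_y) = (-1)^{\epsilon_y}E_{\sigma(y)}$ with $\epsilon_y = \sum_{w'}\Par(\sigma,yw')$, apply $\sigma$ to $E_{xaa}E_{xab}=[xba]$ (and its mirror) to deduce $P^a_{r,1}(\sigma,x)=P^b_{r,1}(\sigma,x)=0$, then apply $\sigma$ to $(E_{xaa}+E_{xab})/E_{xbb}=\sqrt{2}$ and use the relation $\epsilon_{xaa}+\epsilon_{xab}\equiv\Par(\sigma,xb)$ from Step~1 to run a case split on $(\Par(\sigma,xa),\Par(\sigma,xb))$, with the product term $\Par(\sigma,xa)\Par(\sigma,xb)$ in $R_{r,1}$ accounting for the extra sign that appears only when both parities are $1$. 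The paper organizes the final case split as three cases rather than your four (collapsing the two $\Par(\sigma,xb)=0$ subcases since $\Par(\sigma,xa)$ plays no role there), but this is a cosmetic difference; the reindexing that turns $\epsilon_{xaa}+\epsilon_{xab}\equiv\Par(\sigma,xb)$ into $P^a_{r,1}(\sigma,x)=0$, and the role of the various expressions in equation~\eqref{eq:root2prod} after relabeling through $\sigma(x)$, are handled the same way.
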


\begin{proof}
It suffices to prove equations~\eqref{eq:PR1embed} for all $x\in\Orb_f^-(x_0)$
and $\sigma\in G_{\infty}$. Indeed, in that case,
we have that $\sigma\in B_{r,1,\infty}$ from Definition~\ref{def:MRSiroot},
and that equation~\eqref{eq:PR1def2} holds for all nodes $x_1,x_2$ of the tree, since
$\sigma(\sqrt{2})/\sqrt{2} \in\{\pm 1\}$ must be independent of $x$.
That is, equations~\eqref{eq:PR1embed} imply that
every $\sigma\in G_{\infty}$ is in $\tilde{M}_{r,1,\infty}$.
Furthermore, if $\sqrt{2}\in K$, then it would follow that
$R_{r,1}(\sigma)=0$ for all $\sigma\in G_{\infty}$, 
and hence that every such $\sigma$ lies in $\tilde{B}_{r,1,\infty}$.

Note that for any $y\in\Orb_f^-(x_0)$, we have
\begin{equation}
\label{eq:sigmaE}
\sigma(E_y) = (-1)^{R(\sigma,y)} E_{\sigma(y)},
\quad\text{where } R(\sigma,y) := \sum_{w\in\{a,b\}^{r-2}} \Par(\sigma,yw) .
\end{equation}
Indeed, equation~\eqref{eq:sigmaE} is immediate from the
definition of $E_y$ in Lemma~\ref{lem:pickroot2}, combined
with the fact that for each node $u$ of the tree, we have
\[ \sigma\big([ua]\big) = (-1)^{\Par(\sigma,u)} \big[ \sigma(u) a \big] .\]

\medskip

\textbf{Step 1}. We begin by proving the first part
of equations~\eqref{eq:PR1embed}.
Given $x\in\Orb_f^-(x_0)$ and $\sigma\in G_{\infty}$,
if $\Par(\sigma,x)=0$, so that $\sigma(xa)=\sigma(x)a$ and $\sigma(xb)=\sigma(x)b$, then 
by the first equation of~\eqref{eq:shorttaillock} and the fact that
the two-element sets
$\{\sigma(xaa),\sigma(xab)\}$ and $\{\sigma(x)aa,\sigma(x)ab\}$ coincide,
we have
\begin{align*}
1 &= \sigma(1) = \frac{\sigma(E_{xaa}) \sigma(E_{xab})}{\sigma( [xba] ) }
= (-1)^{R(\sigma,xaa) + R(\sigma,xab)} \frac{E_{\sigma(x)aa} E_{\sigma(x)ab} }
{(-1)^{\Par(\sigma,xb)} [\sigma(x)ba]}
\\
&= (-1)^{P_{r,1}^a(\sigma,x)} \frac{E_{\sigma(x)aa} E_{\sigma(x)ab} }{[\sigma(x)ba]}
= (-1)^{P_{r,1}^a(\sigma,x)} .
\end{align*}
(Here, the second and fifth equalities are by the choice of labeling of the tree,
the third is by equation~\eqref{eq:sigmaE},
and the fourth is by the definitions of $P_{r,1}^a$ and $R$.)
On the other hand, if $\Par(\sigma,x)=1$, then 
$\sigma(xa)=\sigma(x)b$ and $\sigma(xb)=\sigma(x)a$, and hence
\begin{align*}
1 &= \sigma(1) = \frac{\sigma(E_{xaa}) \sigma(E_{xab})}{\sigma( [xba] ) }
= (-1)^{R(\sigma,xaa) + R(\sigma,xab)} \frac{E_{\sigma(x)ba} E_{\sigma(x)bb} }
{(-1)^{\Par(\sigma,xb)} [\sigma(x)aa]}
\\
&= (-1)^{P_{r,1}^a(\sigma,x)} \frac{E_{\sigma(x)ba} E_{\sigma(x)bb} }{[\sigma(x)aa]}
= (-1)^{P_{r,1}^a(\sigma,x)} .
\end{align*}
Either way, then, we have $P_{r,1}^a(\sigma,x)=0$ in $\ZZ/2\ZZ$.
By similar reasoning applied to the second equation of~\eqref{eq:shorttaillock},
we also have $P_{r,1}^b(\sigma,x)=0$,
verifying the first portion of equations~\eqref{eq:PR1embed}.

\medskip

\textbf{Step 2}. Given $x\in\Orb_f^-(x_0)$ and $\sigma\in G_{\infty}$,
then by equation~\eqref{eq:sigmaE} and property~\eqref{eq:root2prod}
of the specified labeling of the tree, we have 
\[ (-1)^{R(\sigma,xaa)} E_{\sigma(xaa)} + (-1)^{R(\sigma,xab)} E_{\sigma(xab)}
=\sigma(\sqrt{2}) \cdot (-1)^{R(\sigma,xbb)} E_{\sigma(xbb)} .\]
Multiplying both sides of this equation by $(-1)^{R(\sigma,xab)} / E_{\sigma(xbb)}$,
and applying the identity $P^a_{r,1}=0$ proven in Step~1, i.e., 
$R(\sigma,xaa)+R(\sigma,xab)=\Par(\sigma,xb)$, we obtain
\begin{equation}
\label{eq:root2sigma}
(-1)^{R(\sigma,xab)+R(\sigma,xbb)} \sigma(\sqrt{2}) = 
\frac{ E_{\sigma(xab)} + (-1)^{\Par(\sigma,xb)} E_{\sigma(xaa)} } {E_{\sigma(xbb)} }.
\end{equation}
We consider three cases. First, if $\Par(\sigma,xb)=0$,
then equation~\eqref{eq:root2sigma} becomes
\[ (-1)^{R(\sigma,xab)+R(\sigma,xbb)} \sigma(\sqrt{2}) = 
\frac{ E_{\sigma(xa)a} + E_{\sigma(xa)b} } {E_{\sigma(xb)b} } = \sqrt{2}, \]
by considering the first expression in equation~\eqref{eq:root2prod}
if $\Par(\sigma,x)=0$, or the third if $\Par(\sigma,x)=1$.
Here, we have used the fact that the two-element sets 
$\{\sigma(xaa),\sigma(xab)\}$ and $\{\sigma(xa)a,\sigma(xa)b\}$ coincide.

Second, if $\Par(\sigma,xa)=0$ and $\Par(\sigma,xb)=1$,
then equation~\eqref{eq:root2sigma} becomes
\[ (-1)^{R(\sigma,xab)+R(\sigma,xbb)} \sigma(\sqrt{2}) = 
\frac{ E_{\sigma(xa)b} - E_{\sigma(xa)a} } {E_{\sigma(xb)a} } = \sqrt{2}, \]
by considering the second expression in equation~\eqref{eq:root2prod}
if $\Par(\sigma,x)=0$, or the fourth if $\Par(\sigma,x)=1$.

Third, if $\Par(\sigma,xa)=\Par(\sigma,xb)=1$,
then equation~\eqref{eq:root2sigma} becomes
\[ (-1)^{R(\sigma,xab)+R(\sigma,xbb)} \sigma(\sqrt{2}) = 
\frac{ E_{\sigma(xa)a} - E_{\sigma(xa)b} } {E_{\sigma(xb)a} } = - \sqrt{2}, \]
again by considering the second or fourth expression in equation~\eqref{eq:root2prod}.

Recalling that
\[ R_{r,1}(\sigma,x)=\Par(\sigma,xa)\Par(\sigma,xb) + R(\sigma,xab)+R(\sigma,xbb) ,\]
then in all three cases, we have
$\sigma(\sqrt{2}) = (-1)^{R_{r,1}(\sigma,x)}\sqrt{2}$, as desired.
\end{proof}


\section{The geometric Galois groups}
\label{sec:PinkGroupsGeom}

In this section, we show that our groups 
$B_{r,s,\infty}$ and $\tilde{B}_{r,s,\infty}$ coincide with Pink's geometric groups
$G^{\Pink}_{r,s,\infty}\cong G^{\geom}$,
which we introduced in Section~\ref{ssec:PinkSummary}.
(We will relate his larger arithmetic group $G^{\arith}$
to our larger groups $M_{r,s,\infty}$,  $\tilde{M}_{r,s,\infty}$ in Section~\ref{sec:obtain}.)

\subsection{Restricting to finite subtrees}
For $0\leq m\leq n\leq \infty$, define homomorphisms
\[ \res_{n,m}:\Aut(T_n)\rightarrow \Aut(T_m) \]
given by restricting elements of $\Aut(T_n)$ to the $m$-th level of the tree.
Note that $\Aut(T_\infty)$ is naturally isomorphic to the inverse limit of the finite groups
$\Aut(T_n)$ with respect to these restriction homomorphisms.
In particular, these homomorphisms induce a natural profinite topology on $\Aut(T_\infty)$
from the discrete topology on each of the finite groups $\Aut(T_n)$.

\begin{lemma}
\label{lem:closedsubgps}
The groups $B_{r,s,\infty}$, $M_{r,s,\infty}$, $\tilde{B}_{3,2,\infty}$, $\tilde{M}_{3,2,\infty}$,
$\tilde{B}_{r,1,\infty}$, and $\tilde{M}_{r,1,\infty}$ are closed subgroups of $\Aut(T_\infty)$.
\end{lemma}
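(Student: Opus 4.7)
The plan is to realize each of the six groups as an intersection of preimages of single points under continuous $\ZZ/2\ZZ$-valued functions on $\Aut(T_\infty)$, from which closedness is immediate. First, I would observe that for each node $x$ at level $m$, the parity function $\Par(\cdot,x):\Aut(T_\infty)\to\ZZ/2\ZZ$ factors through the restriction homomorphism $\res_{\infty,m+1}:\Aut(T_\infty)\to\Aut(T_{m+1})$, and therefore is continuous when $\ZZ/2\ZZ$ carries the discrete topology. Consequently, the functions $P^a_{r,s}(\cdot,x)$, $P^b_{r,s}(\cdot,x)$, $R_{3,2}(\cdot,x)$, and $R_{r,1}(\cdot,x)$ are all continuous, since each is a finite $\ZZ/2\ZZ$-linear combination (plus a quadratic monomial, in the case of $R_{3,2}$) of parities $\Par(\cdot,y)$ at finitely many nodes $y$ lying within a bounded distance above $x$.

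With continuity in hand, each defining condition becomes a closed condition. For $M_{r,s,\infty}$, the set of $\sigma\in\Aut(T_\infty)$ satisfying the four equalities
\[
P^a_{r,s}(\sigma,x_1)=P^b_{r,s}(\sigma,x_1)=P^a_{r,s}(\sigma,x_2)=P^b_{r,s}(\sigma,x_2)
\]
for a fixed pair of nodes $(x_1,x_2)$ is clopen; intersecting over all such pairs shows $M_{r,s,\infty}$ is closed. The subgroup $B_{r,s,\infty}$ is then cut out further as $M_{r,s,\infty}\cap P^a_{r,s}(\cdot,x_0)^{-1}(0)$ for $x_0$ the root, still a closed condition. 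The arguments for $\tilde{M}_{3,2,\infty}$, $\tilde{B}_{3,2,\infty}$, $\tilde{M}_{r,1,\infty}$, and $\tilde{B}_{r,1,\infty}$ are entirely parallel: each is carved out of its ambient group (now known to be closed) by further equalities or vanishing conditions on continuous $\ZZ/2\ZZ$-valued functions indexed by nodes of the tree.

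Combined with the subgroup assertions already established in Theorems~\ref{thm:MRSiroot}, \ref{thm:R2root}, and~\ref{thm:MRroot2}, this shows that each of the six sets is a closed subgroup of $\Aut(T_\infty)$. There is no serious obstacle here; the only point requiring any attention is the elementary verification that each of the formulas defining $P^a_{r,s}$, $P^b_{r,s}$, $R_{3,2}$, and $R_{r,1}$ at a node $x$ genuinely involves only finitely many parities, so that each defining condition is detectable by restriction to a finite level of the tree. One could alternatively phrase the argument in terms of the profinite description $\Aut(T_\infty)=\varprojlim_n \Aut(T_n)$, noting that each of the six groups equals $\bigcap_{n\geq 1}\res_{\infty,n}^{-1}(\res_{\infty,n}(H))$ for $H$ the group in question, but the continuous-function viewpoint above is more direct.
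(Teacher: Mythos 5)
Your argument is correct and is essentially the paper's own proof: closedness follows because each defining function $P(\cdot,x)$ depends only on parities $\Par(\cdot,y)$ at finitely many nodes $y$ lying a bounded number of levels above $x$, hence factors through a finite restriction $\res_{\infty,m}$ and is continuous, so each defining relation cuts out a closed set and the six groups are closed as intersections of such sets. (One small imprecision: $R_{r,1}$, not only $R_{3,2}$, also contains a quadratic term $\Par(\sigma,xa)\Par(\sigma,xb)$, though this changes nothing since products of continuous $\ZZ/2\ZZ$-valued functions remain continuous.)
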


\begin{proof}
Let $X$ be the set of nodes of $T_{\infty}$.
Each of the subgroups $G$ in question
is defined by relations involving functions $P:\Aut(T_{\infty})\times X\to\ZZ/2\ZZ$
for which $P(\sigma,x)$ depends only on $\Par(\sigma,y)$ for nodes $y$ lying a bounded number of levels above $x$.
(Here, $P$ is any of $P^a_{r,s}$ or $P^b_{r,s}$ or $R_{3,2}$ or $R_{r,1}$.)
By definition of the profinite topology, then, for each $x\in X$, the function
$\sigma\mapsto P(\sigma,x)$ is continuous, and thus each of the equations defining the subgroup $G$
(for a single $x\in X$ or a pair $x_1,x_2\in X$) defines a closed subset of $\Aut(T_{\infty})$.
The subgroup $G$ is therefore an intersection of closed sets and hence is closed.
\end{proof}

For each integer $n\geq 1$, define
\[ B_{r,s,n}:= \res_{\infty,n}( B_{r,s,\infty} )
\quad\text{and}\quad
G^{\Pink}_{r,s,n}:=\res_{\infty,n}( G^{\Pink}_{r,s,\infty}) , \]
which are subgroups of $\Aut(T_n)$.
Similarly define the subgroups $\tilde{B}_{r,s,n}:= \res_{\infty,n}( \tilde{B}_{r,s,\infty} )$ for $s=1$ and for $(r,s)=(3,2)$.
By \cite[Proposition 3.3.3]{PinkPCF}, we have
\begin{equation}
\label{eq:pinksize}
\log_2 \big|G^{\Pink}_{r,s,n} \big|=
\begin{cases}
2^n-1 & \text{if  $n\leq r$,}\\
n+1 & \text{if  $s=1$ and $r=2$ and $n\geq 2$,}\\
2^n-3\cdot 2^{n-r}+2 & \text{if  $s=1$ and $r\geq 3$ and $n\geq r$,}\\
2^n-5\cdot 2^{n-4}+2 & \text{if  $s=2$ and $r=3$ and $n\geq 4$,}\\
2^n-2^{n-r+1}+1 &  \text{if $s\geq 2$ and $r\geq 4$ and $n\geq r$.}
\end{cases}
\end{equation}

\subsection{Pink's generators}

Pink defines his subgroups $G^{\Pink}_{r,s,\infty}\subseteq\Aut(T_\infty)$ in terms of topological generators,
as follows.
Let $e$ denote the identity element of $\Aut(T_{\infty})$, and
let $\mapt\in \Aut(T_\infty)$ be the automorphism that swaps the two subtrees rooted
at the two nodes $a,b$ at level~1. That is, for all nodes $x$ of the tree, we have
\[ \Par(e,x)=0 \quad \text{and} \quad
\Par(\mapt,x) = \begin{cases}
1 & \text{ if } x=x_0, \\
0 & \text{ otherwise}.
\end{cases} \]
For any $\mapa,\mapb\in\Aut(T_\infty)$, write $(\mapa,\mapb)\in\Aut(T_{\infty})$
for the automorphism that fixes the nodes $a,b$ at level~1, and acts as $\mapa$
on the subtree rooted at $a$, and as $\mapb$ on the subtree rooted at $b$.
That is, $\Par( (\mapa,\mapb), x_0)=0$, and for any word $w$ in the symbols $\{a,b\}$, we have
\[ \Par\big( (\mapa,\mapb), aw \big) = \Par(\mapa,w)
\quad \text{and}\quad \Par\big( (\mapa,\mapb), bw \big) = \Par(\mapb,w) .\]

Pink's subgroup $G^{\Pink}_{r,s,\infty}$ is the closure of the subgroup generated by elements
$\mapa_1,\ldots,\mapa_r\in\Aut(T_{\infty})$ given by the recursive relations
\begin{equation}
\label{eq:PinkTailDef}
\mapa_1 = \mapt, \qquad
\mapa_{s+1} = (\mapa_s, \mapa_r),
\qquad\text{and}\qquad
\mapa_i = (\mapa_{i-1}, 1) \quad \text{for } i\neq 1, s+1.
\end{equation}

Equivalently, we may understand Pink's generators as follows.
For $1\leq i\leq s$, we have
\[ \Par(\mapa_i,y) = \begin{cases}
1 & \text{ if } y = a^{i-1}, \\
0 & \text{ otherwise},
\end{cases} \]
where we write $w^j$ for the word $ww\cdots w$ consisting of $j$ consecutive copies of the word $w$.
On the other hand, for $i\geq s+1$ the recursive definition comes into play.
Define $\ell:=r-s\geq 1$, and
let $w_{\ell}$ be the word of length $\ell$ given by $w_{\ell}:= b a^{\ell-1}$.
A straightforward induction shows that for each $i=s+1,\ldots, r$,
the automorphism $\mapa_i$ of equation~\eqref{eq:PinkTailDef} is given by
\begin{equation}
\label{eq:Parmapa2}
\Par(\mapa_i, w) = \begin{cases}
1 & \text{ if } w = a^{i-s-1} w_\ell^n a^s \text{ for some } n\geq 0, \\
0 & \text{ otherwise}.
\end{cases}
\end{equation}

By Lemma~\ref{lem:closedsubgps}, to prove that one of our groups $B_{\infty}=B_{r,s,\infty}$
or $B_{\infty}=\tilde{B}_{r,s,\infty}$
contains Pink's group $G^{\Pink}_{r,s,\infty}$, it suffices to show that each generator $\mapa_i$ lies in $B_{\infty}$.
Then, to prove that in fact $G^{\Pink}_{r,s,\infty}=B_\infty$, it remains to show that
that $|G^{\Pink}_{r,s,n}| = |B_n|$ for each $n$.
We prove precisely these claims in the various cases in the next few sections.

\subsection{The general long tail case}

\begin{prop}
\label{prop:PinkSubTail}
For any integers $r>s\geq 1$, Pink's subgroup $G^{\Pink}_{r,s,\infty}$ is contained in $B_{r,s,\infty}$.
\end{prop}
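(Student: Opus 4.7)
By Lemma~\ref{lem:closedsubgps}, $B_{r,s,\infty}$ is a closed subgroup of $\Aut(T_\infty)$, and $G^{\Pink}_{r,s,\infty}$ is by definition the closure of the subgroup generated by Pink's elements $\mapa_1,\ldots,\mapa_r$. So my plan is to reduce to showing that each generator $\mapa_i$ belongs to $B_{r,s,\infty}$; equivalently, that $P^a_{r,s}(\mapa_i,x) = P^b_{r,s}(\mapa_i,x) = 0$ in $\ZZ/2\ZZ$ for every $i$ and every node $x$. I would verify this directly from the explicit description of the parities of $\mapa_i$ supplied by equation~\eqref{eq:Parmapa2} together with the simpler formula for $i \leq s$ preceding it.

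For $1 \leq i \leq s$ the check is immediate, since the only node at which $\mapa_i$ has parity~$1$ is $a^{i-1}$, sitting at level $i-1 < s$, while every node appearing in either sum defining $P^a_{r,s}(\mapa_i,x)$ or $P^b_{r,s}(\mapa_i,x)$ has level at least $s$.

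For $s+1 \leq i \leq r$, I would set $v_n := a^{i-s-1} w_\ell^n a^s$, with $|v_n| = i-1+n\ell$ where $\ell := r-s$. The key combinatorial observation I would establish is that, for each $n \geq 1$, the words $v_{n-1}$ and $v_n$ agree on their first $|v_{n-1}|-s$ positions and disagree at position $|v_{n-1}|-s$, where $v_{n-1}$ has the letter $a$ (from its trailing $a^s$ block) while $v_n$ has the letter $b$ (from the new leading $w_\ell$ factor that has been appended). A routine length comparison using $r-s = \ell$ then shows that each of the two sums defining $P^a_{r,s}(\mapa_i,x)$ can potentially contribute a nonzero term only when $|x| \equiv i-1-s \pmod \ell$ and $|x| \geq i-1-s$, and in that situation the two sums pair off via the relation $n = m+1$, with $|x|$ equal to the disagreement position $|v_{n-1}|-s$. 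The first sum then requires an $a$ at position $|x|$ of $v_n$, where instead a $b$ appears; the second requires a $b$ at position $|x|$ of $v_{n-1}$, where instead an $a$ appears. Both sums therefore vanish, so $P^a_{r,s}(\mapa_i,x) = 0$.

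The analysis of $P^b_{r,s}(\mapa_i,x)$ is parallel, but with the roles of $a$ and $b$ in the length bookkeeping swapped. At the same critical position $|x| = |v_{n-1}|-s$, the character of $v_n$ is now (correctly) a $b$ and that of $v_{n-1}$ is (correctly) an $a$, so each of the two sums contributes $1$ precisely when $x$ is the common prefix of $v_{n-1}$ and $v_n$ of that length; this happens for both sums simultaneously or for neither, since $v_{n-1}$ and $v_n$ agree through position $|x|-1$. The two contributions are then equal and sum to $0$ in $\ZZ/2\ZZ$. I expect the main obstacle to be bookkeeping the single combinatorial observation about the first disagreement between $v_{n-1}$ and $v_n$; once it is in hand, the rest is a direct parity calculation.
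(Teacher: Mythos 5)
Your proposal is correct and follows essentially the same approach as the paper: reduce to the generators $\mapa_i$ via the closedness of $B_{r,s,\infty}$, dispose of $i\le s$ by a level count, and for $i\ge s+1$ read off the vanishing of $P^a_{r,s}(\mapa_i,x)$ and $P^b_{r,s}(\mapa_i,x)$ from the explicit description in equation~\eqref{eq:Parmapa2}. The only stylistic difference is that the paper organizes the $i\ge s+1$ case around whether a parity-$1$ node sits $s$ levels above $x$ (deducing $y=xaa^{s-1}$, $y'=xba^{r-1}$), whereas you phrase the same fact as the first disagreement between consecutive words $v_{n-1}$ and $v_n$ occurring at position $\lvert v_{n-1}\rvert - s$ with an $a$ in $v_{n-1}$ and a $b$ in $v_n$ -- these are the identical observation, and your pairing-of-the-two-sums bookkeeping is a valid repackaging of the paper's case analysis.
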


\begin{proof}
By Lemma \ref{lem:closedsubgps}, it suffices to prove that each of the generators $\mapa_1,\ldots,\mapa_r$ of Pink's group belongs to $B_{r,s,\infty}$. 
As in equation~\eqref{eq:Parmapa2}, let $\ell:=r-s\geq 1$, and let $w_{\ell}:= b a^{\ell-1}$.

Fix a node $x$ of the tree, and an integer $i\in \{ 1,\ldots, r\}$.
Recalling Definition~\ref{def:MRSiroot},
we must show that $P_{r,s}^t(\mapa_i,x)\equiv 0\pmod{2}$
for both $t=a$ and $t=b$.
We consider three cases.

First, suppose $i\leq s$. Then $\Par(\mapa_i,w)=0$ for all words $w$ of length at least $s$.
In particular, all of the terms appearing in the definition of $P_{r,s}^a(\mapa_i,x)$
and $P_{r,s}^b(\mapa_i,x)$ are zero, as desired.

Second, suppose that $i=s+1+j$ for some $0\leq j \leq \ell-1$,
and that there is some node $y$ exactly $s$ levels above $x$
for which $\Par(\mapa_i, y) =1$. Then according to equation~\eqref{eq:Parmapa2},
we must have $y=a^j w_{\ell}^n a^s$ for some $n\geq 0$,
and hence $x=a^j w_{\ell}^n$. Thus, $y=xaa^{s-1}$ is the only node $s$ levels above $x$
for which $\Par(\mapa_i, y) =1$, and $y':=xw_{\ell}a^s = xb a^{r-1}$ 
is the only node $r$ levels above $x$ for which $\Par(\mapa_i, y') =1$. Therefore,
\[ P_{r,s}^a(\mapa_i,x) = 0 + 0 = 0,
\quad\text{and}\quad
P_{r,s}^b(\mapa_i,x) = 1 + 1 \equiv 0 \pmod{2}, \]
as desired.

Third, suppose that $i=s+1+j$ for some $0\leq j \leq \ell-1$,
and that $\Par(\mapa_i, y) =0$ for all nodes $y$ that are $s$ levels above $x$.
Again by equation~\eqref{eq:Parmapa2}, $x$ cannot be of the form $a^j w_{\ell}^n$ for any $n\geq 0$,
and therefore
we must have $\Par(\mapa_i, y') =0$ for all nodes $y'$ that are $r$ levels above $x$,
since the word $w_{\ell} a^s$ has length $r$. Thus, 
all of the terms appearing in the definition of $P_{r,s}^a(\mapa_i,x)$
and $P_{r,s}^b(\mapa_i,x)$ are zero, as desired.
\end{proof}

\begin{thm}
\label{thm:PinkLongTail}
For all integers $r>s\geq 2$ with $r\geq 4$, we have $G^{\Pink}_{r,s,\infty}=B_{r,s,\infty}$.
\end{thm}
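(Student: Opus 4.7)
The plan is to reduce to finite levels and then compare orders. By Proposition~\ref{prop:PinkSubTail}, we already have the containment $G^{\Pink}_{r,s,\infty}\subseteq B_{r,s,\infty}$, and by Lemma~\ref{lem:closedsubgps} both groups are closed subgroups of $\Aut(T_\infty)$. Since $\Aut(T_\infty) = \varprojlim_n \Aut(T_n)$ under the restriction homomorphisms $\res_{\infty,n}$, it suffices to prove $G^{\Pink}_{r,s,n} = B_{r,s,n}$ as subgroups of $\Aut(T_n)$ for every $n\geq 1$. For $n\leq r$, equation~\eqref{eq:pinksize} yields $\log_2|G^{\Pink}_{r,s,n}| = 2^n - 1 = \log_2|\Aut(T_n)|$, and since $B_{r,s,n}\subseteq \Aut(T_n)$, all three groups coincide trivially. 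Thus it remains to treat $n\geq r+1$.

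For such $n$, the strategy is to bound $|B_{r,s,n}|$ from above by Pink's count. An element $\sigma\in\Aut(T_n)$ is determined uniquely by its parity vector $\bigl(\Par(\sigma,y)\bigr)_y\in(\ZZ/2\ZZ)^{2^n-1}$, where $y$ ranges over the nodes at levels $0,1,\ldots,n-1$. Since any $\sigma\in B_{r,s,n}$ is the image under $\res_{\infty,n}$ of some element of $B_{r,s,\infty}$, it satisfies $P^a_{r,s}(\sigma,x) = P^b_{r,s}(\sigma,x) = 0$ for every node $x$ at levels $0,1,\ldots,n-r-1$ (the only levels at which these conditions involve parities visible in $\Aut(T_n)$). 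Hence $B_{r,s,n}\subseteq S_n$, where $S_n\subseteq\Aut(T_n)$ is the set cut out by these $2(2^{n-r}-1)$ linear conditions on parity bits.

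The crux is to show that these linear conditions are linearly independent over $\ZZ/2\ZZ$. For any node $x$ at level $m$, the equation $P^a_{r,s}(\sigma,x)=0$ involves only parities in the subtree above $xa$ at level $m+r$ (there are $2^{r-1}$ such parities) together with parities in the subtree above $xb$ at level $m+s$, and $P^b_{r,s}(\sigma,x)=0$ uses the analogous subtrees with $a,b$ swapped. Consequently, the two equations at a single $x$ touch disjoint sets of top-level parities (one set lies above $xa$, the other above $xb$); and for any two distinct $x,x'$ at the same level, the corresponding subtrees are entirely disjoint, so their equations involve disjoint parity variables. Order the conditions by decreasing $|x|$ and apply a triangular reduction: any level-$(n-1)$ parity can appear only in a condition at $|x|=n-r-1$, and the disjointness just noted forces all coefficients at level $|x|=n-r-1$ in a vanishing linear combination to be zero. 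Stripping these off and repeating the argument at parity-levels $n-2,n-3,\ldots$ kills all remaining coefficients in turn.

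Consequently, $|B_{r,s,n}|\leq |S_n| = 2^{(2^n-1)-2(2^{n-r}-1)} = 2^{2^n - 2^{n-r+1} + 1}$, which by equation~\eqref{eq:pinksize} (applicable in the regime $s\geq 2$, $r\geq 4$, $n\geq r$) equals $|G^{\Pink}_{r,s,n}|$. Sandwiched between two sets of equal cardinality, $G^{\Pink}_{r,s,n}\subseteq B_{r,s,n}\subseteq S_n$, the two subgroups must coincide, completing the argument at each finite level and hence, by closedness, at the infinite level as well. The main obstacle is the linear-independence step in the third paragraph; once one identifies precisely which parity variables each $P^a_{r,s}(\,\cdot\,,x)$ and $P^b_{r,s}(\,\cdot\,,x)$ touches, the disjointness of subtrees at a common level and the strict ordering of top levels across varying $|x|$ make the triangular reduction transparent.
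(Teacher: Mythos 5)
Your proposal is correct and takes essentially the same approach as the paper: both sandwich $B_{r,s,n}$ between $G^{\Pink}_{r,s,n}$ and the set $S_n$ (which the paper calls $B'_{r,s,n}$) cut out by the parity conditions at nodes $x$ with $|x|\le n-r-1$, and then match $|S_n|$ to Pink's formula~\eqref{eq:pinksize}. The only real difference is organizational: the paper computes $|S_n|$ by inducting on $n$ and counting the kernel of $\res_{n,n-1}\colon B'_{r,s,n}\to B'_{r,s,n-1}$ (for which only the conditions at level $|x|=n-r-1$ are nonvacuous), whereas you establish the rank of the full linear system directly via a triangular reduction that descends through $|x|=n-r-1,\,n-r-2,\ldots,0$, using at each stage that the level-$(|x|+r)$ parities in $P^a_{r,s}(\cdot,x)$ and $P^b_{r,s}(\cdot,x)$ lie above disjoint subtrees. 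Both counts yield $\log_2|S_n|=2^n-2^{n-r+1}+1$, so the conclusions are identical; the paper's inductive version is a bit leaner since it never needs to argue global linear independence, but your version is a perfectly valid substitute.
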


\begin{proof}
For each integer $n\geq 1$, define
\[ B'_{r,s,n}:= \{ \sigma\in \Aut(T_n) : \forall m<n-r \text{ and } \forall x\in\{a,b\}^m, \,
P^a_{r,s}(\sigma,x)=P^b(\sigma,x)=0\},\]
where we recall $P^a_{r,s}$ and $P^b_{r,s}$ take values in $\ZZ/2\ZZ$.
By Proposition~\ref{prop:PinkSubTail} and Definition~\ref{def:MRSiroot}, we have
\[ G^{\Pink}_{r,s,n} \subseteq B_{r,s,n} \subseteq B'_{r,s,n} . \]
Thus, it suffices to show that $|B'_{r,s,n}| \leq |G^{\Pink}_{r,s,n}|$ for all $n\geq 1$.

We proceed by induction on $n$.
For $n\leq r$, we have $B'_{r,s,n}= \Aut(T_n)$, and hence
\[ \log_2\big|B'_{r,s,n}\big|=\log_2|\Aut(T_n)|= 2^n-1 = \log_2 \big|G^{\Pink}_{r,s,n}\big|, \] 
recalling formula~\eqref{eq:pinksize} for $\log_2 |G^{\Pink}_{r,s,n}|$.

Now let $n\geq r+1$, and suppose $|B'_{r,s,n-1}|\leq |G^{\Pink}_{r,s,n-1}|$. 
Let $S_{r,s,n}$ denote the kernel of the map $\res_{n,n-1} : B'_{r,s,n}\rightarrow B'_{r,s,n-1}$, so that
\[ \big|B'_{r,s,n}\big| = \big|\res_{n,n-1}(B'_{r,s,n})\big|\cdot \big|S_{r,s,n}\big|
\leq \big|B'_{r,s,n-1}\big|\cdot \big|S_{r,s,n}\big| .\]

We now compute $|S_{r,s,n}|$.
Let $Y_n$ denote the kernel of
$\res_{n,n-1} : \Aut(T_n)\to\Aut(T_{n-1})$. Then $S_{r,s,n}$ is the set of $\sigma\in Y_n$ for which
\begin{equation}
\label{eq:condition_B_lt}
P^a_{r,s}(\sigma,x)=P^b_{r,s}(\sigma,x)=0\in \ZZ/2\ZZ
\end{equation}
for all $0\leq m < n-r$ and all $x\in\{a,b\}^m$.

For $\sigma\in Y_n$, condition~\eqref{eq:condition_B_lt} is trivially satisfied
for every node $x$ at levels $m<n-r-1$. Thus, we only need to consider nodes $x$
at level $m=n-r-1$. For such $x$, condition~\eqref{eq:condition_B_lt} holds if and only if
\begin{equation}
\label{eq:longtaileven}
\sum_{w\in \{a,b\}^{r-1}} \Par(\sigma,xaw) \quad\text{and }
\sum_{w\in \{a,b\}^{r-1}} \Par(\sigma,xbw) \quad\text{are both even}.
\end{equation}

To determine whether a given $\sigma\in Y_n$ belongs to $S_{r,s,n}$,
for each node $x$ at level $n-r-1$, the values of 
$\Par(\sigma,xaw)$ for $w\in \{a,b\}^{r-1}$ can be arbitrary except for $\Par(\sigma,xab^{r-1})$, 
which must be chosen so that the first sum in condition~\eqref{eq:longtaileven} is even.
Similarly, the values of
$\Par(\sigma,xbw)$ for $w\in \{a,b\}^{r-1}$ can be arbitrary except for $\Par(\sigma,xbb^{r-1})$, 
which must be chosen so that the second sum is even, yielding two parity restrictions at the node $x$.
Since there are $2^{n-r-1}$ nodes $x$ at level $n-r-1$,
each with independent such parity restrictions,
there are $2\cdot 2^{n-r-1}$ parity restrictions in total.
There are no restrictions on
the parities at the remaining $2^{n-1}-2^{n-r}$ nodes at level $n-1$, and therefore
\[\log_2\big|S_{r,s,n}\big|=2^{n-1}-2^{n-r}.\]

By equation~\eqref{eq:pinksize}, 
\begin{align*}
\log_2 \big|G^{\Pink}_{r,s,n}\big|-\log_2 \big|G^{\Pink}_{r,s,n-1}\big|
& = (2^n-2^{n-r+1}+1) -(2^{n-1}-2^{n-1-r+1}+1)\\
&=2^{n-1}-2^{n-r} =\log_2\big|S_{r,s,n}\big|.
\end{align*}
It follows that $\log_2 |G^{\Pink}_{r,s,n-1}| + \log_2|S_{r,s,n}| = \log_2 |G^{\Pink}_{r,s,n}|$, and hence
\[ \big|B'_{r,s,n}\big| \leq \big|B'_{r,s,n-1}\big| \cdot \big|S_{r,s,n}\big|
\leq  \big|G^{\Pink}_{r,s,n-1}\big| \cdot \big|S_{r,s,n}\big| = \big|G^{\Pink}_{r,s,n}\big|, \]
as desired.
\end{proof}

\subsection{The special long-tail case}
\label{ssec:PinkSpecial}
Recall from Section~\ref{sec:Special} that when $r=3$ and $s=2$,
we defined a proper subgroup $\tilde{B}_{3,2,\infty}$ of $B_{3,2,\infty}$
in Definition~\ref{def:R2root}. In this case, it is this smaller group that
realizes $G^{\geom}$. To see this, first observe that Pink's generators in this case are
\begin{equation}
\label{eq:PinkSpecDef}
\mapa_1 = \mapt, \qquad
\mapa_2 = (\mapt, 1),
\qquad\text{and}\qquad
\mapa_3 = (\mapa_2, \mapa_3).
\end{equation}
In particular, we have $\Par(\mapa_1,x)=1$ only for $x=x_0$,
and $\Par(\mapa_2,x)=1$ only for $x=a$, while
\begin{equation}
\label{eq:Parmapa3}
\Par(\mapa_3, w) = \begin{cases}
1 & \text{ if } w = b^n aa \text{ for some } n\geq 0, \\
0 & \text{ otherwise}.
\end{cases}
\end{equation}

\begin{prop}
\label{prop:PinkSubSpec}
Pink's subgroup $G^{\Pink}_{3,2,\infty}$ is contained in $\tilde{B}_{3,2,\infty}$.
\end{prop}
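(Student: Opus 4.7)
The plan is to show that each of Pink's topological generators $\mapa_1,\mapa_2,\mapa_3$ from equation~\eqref{eq:PinkSpecDef} lies in $\tilde{B}_{3,2,\infty}$; by Lemma~\ref{lem:closedsubgps}, this closed subgroup will then contain the closure $G^{\Pink}_{3,2,\infty}$ of the subgroup they generate. Proposition~\ref{prop:PinkSubTail} applied with $r=3$ and $s=2$ already places each $\mapa_i$ in $B_{3,2,\infty}\subseteq M_{3,2,\infty}$, so what remains is to verify that $R_{3,2}(\mapa_i,x)=0$ for every node $x$ of $T_\infty$. This single equality, holding at every node, simultaneously delivers the constancy required for membership in $\tilde{M}_{3,2,\infty}$ and the vanishing required for membership in $\tilde{B}_{3,2,\infty}$.

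The generators $\mapa_1=\mapt$ and $\mapa_2=(\mapt,1)$ will be handled in one stroke. The support of $\Par(\mapa_1,\cdot)$ is the root alone, while the support of $\Par(\mapa_2,\cdot)$ is the single node $a$ at level~$1$. Every node $xw, xwb, xwat, xaa, xab, xba, xbb$ appearing in the formula defining $R_{3,2}(\cdot,x)$ lies at level at least~$2$, so none can coincide with a support node of $\mapa_1$ or $\mapa_2$, and $R_{3,2}(\mapa_1,x)=R_{3,2}(\mapa_2,x)=0$ follows immediately.

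The substantive computation is with $\mapa_3$, for which equation~\eqref{eq:Parmapa3} gives $\Par(\mapa_3,v)=1$ precisely when $v=b^n aa$ for some $n\geq 0$. The key structural observation is that if $x$ contains any $a$, then no extension of $x$ can be of the form $b^n aa$, so every parity term in $R_{3,2}(\mapa_3,x)$ vanishes and $R_{3,2}(\mapa_3,x)=0$ trivially. It remains to handle $x=b^m$ for $m\geq 0$, where the plan is to tabulate the nonzero contributions directly: in the first sum only $w=aa$ contributes, via $\Par(\mapa_3,b^m aa)=1$; the second sum is identically zero because every $xwb$ ends in $b$; in the triple sum only the pair $w=bb$, $t=a$ contributes, via $\Par(\mapa_3,b^{m+2}aa)=1$; and the cross-product $(1+0)\cdot(0+0)$ vanishes because the second factor sees only extensions of $xb$, none of which has the form $b^n aa$. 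The total is $1+0+1+0\equiv 0\pmod{2}$, completing the proof.

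The computation is essentially combinatorial and presents no real obstacle; the only delicate point is the cross-product, where the single nonzero factor $\Par(\mapa_3,b^m aa)=1$ threatens to contribute but is saved by being multiplied by the identically zero factor coming from the $xb$-subtree.
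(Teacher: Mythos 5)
Your proof is correct and follows essentially the same approach as the paper's: invoke Proposition~\ref{prop:PinkSubTail} for membership in $B_{3,2,\infty}$, dispatch $\mapa_1$ and $\mapa_2$ by noting their parity supports lie too close to the root to appear in $R_{3,2}$, and then for $\mapa_3$ split on whether $x$ is a power of $b$; in the all-$b$ case, you tabulate the same two unit contributions from $w=aa$ and $(w,t)=(bb,a)$ and verify the cross term has a vanishing $xb$-factor.

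One small imprecision in your complementary case: the blanket claim that ``if $x$ contains any $a$, then no extension of $x$ can be of the form $b^n aa$'' is not literally true. For $x = b^n a$, the one-symbol extension $xa = b^n aa$ is a counterexample. The correct statement, which is what the paper uses, is that no extension of $x$ by a word of length at least $2$ has this form when $x$ contains an $a$; and all nodes appearing in $R_{3,2}(\mapa_3,x)$ lie at least two levels above $x$, so the conclusion $R_{3,2}(\mapa_3,x)=0$ still holds. Your final answer is right; only the intermediate structural claim needs tightening.
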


\begin{proof}
By Proposition~\ref{prop:PinkSubTail}, we already have
$G^{\Pink}_{3,2,\infty}\subseteq B_{3,2,\infty}$. It suffices to show that
each of $\mapa_1,\mapa_2,\mapa_3$ belongs to $\tilde{B}_{3,2,\infty}$.

Fix a node $x$ of the tree. We must show that $R_{3,2}(\mapa_i,x)\equiv 0\pmod{2}$
for each of $i=1,2,3$.
For $i=1,2$, this is trivial, because the formula for $R_{3,2}(\mapa_i,x)$
from Definition~\ref{def:R2root} involves $\Par(\mapa_i,y)$ only for nodes $y$
lying at least two levels above $x$, for which $\Par(\mapa_i,y)=0$.
It remains to consider $\mapa_3$, via two cases.

First, suppose $x=b^n$ for some $n\geq 0$,
and consider equation~\eqref{eq:R2def} defining $R_{3,2}(\sigma,x)$.
According to equation~\eqref{eq:Parmapa3}, for any word $w$ of length $2$,
the first term $\Par(\mapa_3,xw)$ is equal to $1$ for $w=aa$ and $0$ otherwise.
The second term $\Par(\mapa_3,xwb)$ is always $0$. The third and fourth terms
$\Par(\mapa_3,xwaa)$ and $\Par(\mapa_3,xwab)$ are equal to $1$ and $0$ (respectively)
for $w=bb$, whereas they are both $0$ otherwise.
In addition, we have $\Par(\mapa_3,xba) = \Par(\mapa_3,xbb)=0$, and hence
the expression on the second line of equation~\eqref{eq:R2def} is $0$.
Adding it all up, we get $R_{3,2}(\sigma,x)=1+1\equiv 0 \pmod{2}$.

Second, suppose $x$ is not of the form $b^n$. Then for any word $w$ of two or more symbols,
we have $\Par(\mapa_3,xw)=0$, by equation~\eqref{eq:Parmapa3},
whence $R_{3,2}(\sigma,x)=0$, as desired.
\end{proof}

\begin{thm}
\label{thm:PinkSpecial}
$G^{\Pink}_{3,2,\infty}=\tilde{B}_{3,2,\infty}$.
\end{thm}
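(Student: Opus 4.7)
The plan is to follow the template of Theorem~\ref{thm:PinkLongTail}, adding one extra layer of linear constraints coming from $R_{3,2}$ to absorb the additional factor of $5\cdot 2^{n-4}$ appearing in the $(r,s)=(3,2)$ entry of Pink's size formula~\eqref{eq:pinksize}. Proposition~\ref{prop:PinkSubSpec} already gives the inclusion $G^{\Pink}_{3,2,\infty}\subseteq \tilde{B}_{3,2,\infty}$, and both groups are closed in $\Aut(T_\infty)$ by Lemma~\ref{lem:closedsubgps}. Since $\Aut(T_\infty)=\varprojlim_n \Aut(T_n)$, it therefore suffices to establish $|\tilde{B}_{3,2,n}|\leq|G^{\Pink}_{3,2,n}|$ at each finite level $n$.

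First I would introduce the auxiliary finite-level set $\tilde{B}'_{3,2,n}\subseteq \Aut(T_n)$ consisting of all $\sigma$ with $P^a_{3,2}(\sigma,x)=P^b_{3,2}(\sigma,x)=0$ for every node $x$ of level less than $n-3$, and with $R_{3,2}(\sigma,x)=0$ for every $x$ of level at most $n-5$ (the largest level where every term of the defining formula for $R_{3,2}(\sigma,x)$ uses only parities at nodes of $T_n$). Then $G^{\Pink}_{3,2,n}\subseteq \tilde{B}_{3,2,n}\subseteq \tilde{B}'_{3,2,n}$ and $\res_{n,n-1}(\tilde{B}'_{3,2,n})\subseteq \tilde{B}'_{3,2,n-1}$, so the problem reduces to proving $|\tilde{B}'_{3,2,n}|\leq|G^{\Pink}_{3,2,n}|$ by induction on $n$. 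The base cases $n\leq 4$ involve only the $P$-constraints and match~\eqref{eq:pinksize} by direct inspection.

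For the inductive step I would write $|\tilde{B}'_{3,2,n}|\leq |\tilde{B}'_{3,2,n-1}|\cdot |S_n|$, where $S_n$ is the kernel of $\res_{n,n-1}$ restricted to $\tilde{B}'_{3,2,n}$. An element of $S_n$ has zero parity at every node below level $n-1$, so any defining equation whose terms all sit strictly below level $n-1$ is automatically satisfied. The surviving relations on the $2^{n-1}$ top-level parities are $P^a_{3,2}(\sigma,y)=P^b_{3,2}(\sigma,y)=0$ for each of the $2^{n-4}$ nodes $y$ at level $n-4$ (each reducing to a sum of four level-$(n-1)$ parities), together with $R_{3,2}(\sigma,x)=\sum_{w\in\{a,b\}^2,\,t\in\{a,b\}}\Par(\sigma,xwat)=0$ for each of the $2^{n-5}$ nodes $x$ at level $n-5$; altogether this gives $5\cdot 2^{n-5}$ linear relations in $\ZZ/2\ZZ$.

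The main obstacle is verifying that these relations are linearly independent, so that $\log_2|S_n|=2^{n-1}-5\cdot 2^{n-5}$. Equations attached to distinct nodes $x$ of level $n-5$ involve disjoint sets of level-$(n-1)$ parities, so independence reduces to the five equations sitting above a fixed such $x$. The four $P$-equations at the two children of $x$ have pairwise disjoint supports (the $xaaw$'s, $xabw$'s, $xbaw$'s, and $xbbw$'s as $w$ ranges over $\{a,b\}^2$), and the $R_{3,2}$-equation picks out exactly the two parities in each $P$-support whose third symbol after $x$ equals $a$; selecting from each $P$-support a parity whose third symbol equals $b$ (for instance $\Par(\sigma,xaaba)$ for the $P^a_{3,2}$-equation at $xa$) isolates each $P$-equation, after which any parity in the $R$-support isolates $R_{3,2}$ itself. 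Granted this, formula~\eqref{eq:pinksize} yields $\log_2|G^{\Pink}_{3,2,n}|-\log_2|G^{\Pink}_{3,2,n-1}|=2^{n-1}-5\cdot 2^{n-5}=\log_2|S_n|$, which closes the induction.
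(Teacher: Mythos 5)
Your proposal is correct and follows essentially the same strategy as the paper: sandwich $G^{\Pink}_{3,2,n}$ between $\tilde{B}_{3,2,n}$ and the auxiliary group $\tilde{B}'_{3,2,n}$, induct on $n$, and count the constraints on the kernel $S_n$ of $\res_{n,n-1}$ using the $P^a_{3,2},P^b_{3,2}$ equations at level $n-4$ and the $R_{3,2}$ equation at level $n-5$, matching Pink's size formula. The only (cosmetic) difference is how you justify that these $5\cdot 2^{n-5}$ constraints are independent: you argue linear independence directly by exhibiting, for each equation, a variable not occurring in the others, whereas the paper argues constructively by designating a distinct ``dependent'' parity (e.g.\ $\Par(\sigma,ya^4)$ for the $R$-constraint at $y$, then $\Par(\sigma,xab^2)$, $\Par(\sigma,xbb^2)$ for the $P$-constraints) for each constraint and observing that the remaining parities are free.
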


\begin{proof}
For each integer $n\geq 1$, define
\[ \tilde{B}'_{3,2,n}:= \{ \sigma\in B'_{3,2,n} : \forall m < n-4 \text{ and } \forall x\in \{a,b\}^m, \,
R_{3,2}(\sigma,x)=0\} . \]
where we recall $R_{3,2}$ takes values in $\ZZ/2\ZZ$,
and where $B'_{r,s,n}$ is as in the proof of Theorem~\ref{thm:PinkLongTail}.
By Proposition~\ref{prop:PinkSubSpec} and Definition~\ref{def:R2root}, we have
\[ G^{\Pink}_{3,2,n} \subseteq \tilde{B}_{3,2,n} \subseteq \tilde{B}'_{3,2,n} . \]
Thus, it suffices to show that $|\tilde{B}'_{3,2,n}| \leq |G^{\Pink}_{3,2,n}|$ for all $n\geq 1$.

We proceed by induction on $n$.
For $n\leq 3$, we have $\tilde{B}'_{3,2,n}= \Aut(T_n)$, and hence
\[ \log_2\big|\tilde{B}'_{3,2,n}\big| = \log_2|\Aut(T_n)| = 2^n-1= \log_2\big|G^{\Pink}_{3,2,n}\big|, \]
recalling formula~\eqref{eq:pinksize} for $\log_2 |G^{\Pink}_{r,s,n}|$.
For $n=4$, we have $\tilde{B}'_{3,2,4}=B'_{3,2,4}$,
so by the argument of Theorem~\ref{thm:PinkLongTail}, we obtain
\[ \log_2\big|\tilde{B}'_{3,2,4}\big|=\log_2\big|B'_{3,2,4}\big|\leq 13 =  \log_2\big|G^{\Pink}_{3,2,4}\big| .\]

Now let $n>4$, and suppose $|\tilde{B}'_{3,2,n-1}|\leq |G^{\Pink}_{3,2,n-1}|$. 
Let $\tilde{S}_{3,2,n}$ denote the kernel of the map
$\res_{n,n-1} : \tilde{B}'_{3,2,n}\rightarrow \tilde{B}'_{3,2,n-1}$, so that
\[ \big|\tilde{B}'_{3,2,n}\big| = \big|\res_{n,n-1}(\tilde{B}'_{3,2,n})\big|\cdot \big|\tilde{S}_{3,2,n}\big|
\leq \big|\tilde{B}'_{3,2,n-1}\big|\cdot \big|\tilde{S}_{3,2,n}\big| .\]
As before, let $Y_n$ be the kernel of $\res_{n,n-1}:\Aut(T_n)\to\Aut(T_{n-1})$.
Then by the definition of $\tilde{B}'_{3,2,n}$ in terms of $B'_{3,2,n}$,
the subgroup $\tilde{S}_{3,2,n}$ is the set of $\sigma\in Y_n$ for which
condition~\eqref{eq:condition_B_lt} holds for all nodes $x$ at level $n-4$,
and for which the further condition
\begin{equation}
\label{eq:condition_B_slt}
R_{3,2}(\sigma,y) = 0\in \ZZ/2\ZZ
\end{equation}
holds for nodes $y$ at all levels $0\leq m<n-4$.
Since $\sigma\in Y_n$ acts trivially on $T_{n-1}$, 
condition \eqref{eq:condition_B_slt} is trivially satisfied for every node $y$ at levels $m<n-5$.

Thus, $\sigma\in Y_n$ belongs to $\tilde{S}_{3,2,n}$ if and only if,
on the one hand,
condition~\eqref{eq:longtaileven} holds for all nodes $x$ at level $n-4$,
and on the other hand,
\begin{equation}\label{eq:slt2}
\sum_{w\in \{a,b\}^2}\sum_{t\in\{a,b\}}\Par(\sigma,ywat) \quad \text{is even}
\end{equation}
for all nodes $y$ at level $n-5$.

Therefore, to determine whether $\sigma\in Y_n$ belongs to $\tilde{S}_{3,2,n}$,
then for each node $y$ at level $n-5$, the values of
$\Par(\sigma,ywat)$ for $w\in \{a,b\}^{2}$ and $t\in\{a,b\}$
can be arbitrary except for $\Par(\sigma,ya^{4})$,
which must be chosen so that the sum in condition~\eqref{eq:slt2} is even.
Since there are $2^{n-5}$ nodes at level $n-5$,
there are $2^{n-5}$ parity restrictions imposed in this step.
Next, as in the proof of Theorem~\ref{thm:PinkLongTail},
for each node $x$ at level $n-4$, the values of 
$\Par(\sigma,xaw)$ for $w\in \{a,b\}^{2}$ 
that have not already been set can be arbitrary except for $\Par(\sigma,xab^{2})$, 
which must be chosen so that the first sum in condition~\eqref{eq:longtaileven} is even.
Similarly, the values of $\Par(\sigma,xbw)$ for $w\in \{a,b\}^{2}$
that have not already been set can be arbitrary except for $\Par(\sigma,xbb^{2})$, 
which must be chosen so that the second sum in condition~\eqref{eq:longtaileven} is even.
Together, these steps impose a further $2\cdot 2^{n-4}$ parity restrictions.
There are no restrictions on
the parities at the remaining  nodes at level $n-1$, and therefore
\[\log_2\big|\tilde{S}_{3,2,n}\big|= 2^{n-1}-2^{n-5} - 2^{n-3} = 2^{n-1}-5\cdot 2^{n-5}.\]

By equation~\eqref{eq:pinksize}, 
\begin{align*}
\log_2 \big|G^{\Pink}_{3,2,n}\big|-\log_2 \big|G^{\Pink}_{3,2,n-1}\big|
& = (2^n-5\cdot 2^{n-4}+2) -(2^{n-1}-5\cdot 2^{n-5}+2)\\
&=2^{n-1}-5\cdot 2^{n-5} = \log_2\big|\tilde{S}_{3,2,n}\big|.
\end{align*}
It follows that $\log_2 |G^{\Pink}_{3,2,n-1}| + \log_2|\tilde{S}_{3,2,n}| = \log_2 |G^{\Pink}_{3,2,n}|$, and hence
\[ \big|\tilde{B}'_{3,2,n}\big| \leq  \big|\tilde{B}'_{3,2,n-1}\big| \cdot \big|\tilde{S}_{3,2,n}\big|
\leq  \big|G^{\Pink}_{3,2,n-1}\big| \cdot \big|\tilde{S}_{3,2,n}\big| = \big|G^{\Pink}_{3,2,n}\big|, \]
as desired.
\end{proof}

\subsection{The non-Chebyshev short-tail case}
\label{ssec:PinkShort}

When $s=1$ and $r\geq 3$, recall from Section~\ref{sec:shorttail}
that we defined a proper subgroup $\tilde{B}_{r,1,\infty}$ of $B_{r,1,\infty}$
in Definition~\ref{def:MRroot2}. Once again, it is this smaller group that
realizes $G^{\geom}$. To see this, first observe that in this case,
we have $\mapa_1=\mapt$, and for $i\geq 2$,
equation~\eqref{eq:Parmapa2} becomes
\begin{equation}
\label{eq:Parmapa4}
\Par(\mapa_i, w) = \begin{cases}
1 & \text{ if } w = a^{i-2} w_{r-1}^n a \text{ for some } n\geq 0, \\
0 & \text{ otherwise},
\end{cases}
\end{equation}
where $w_{r-1}:=ba^{r-2}$.

\begin{prop}
\label{prop:PinkSubShort}
For any integer $r\geq 3$, Pink's subgroup $G^{\Pink}_{r,1,\infty}$ is contained in $\tilde{B}_{r,1,\infty}$.
\end{prop}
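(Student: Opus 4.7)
The plan is to follow the same pattern as Propositions~\ref{prop:PinkSubTail} and~\ref{prop:PinkSubSpec}. By Proposition~\ref{prop:PinkSubTail} we already have $G^{\Pink}_{r,1,\infty}\subseteq B_{r,1,\infty}$, and by Lemma~\ref{lem:closedsubgps} the subgroup $\tilde{B}_{r,1,\infty}$ is closed in $\Aut(T_{\infty})$. It therefore suffices to check that each topological generator $\mapa_i$ of $G^{\Pink}_{r,1,\infty}$ satisfies $R_{r,1}(\mapa_i,x)=0$ for every node $x$ of $T_\infty$; combined with $\mapa_i\in B_{r,1,\infty}$, this will place $\mapa_i$ in $\tilde{B}_{r,1,\infty}$.

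The case $i=1$ is immediate: $\mapa_1=\mapt$ has $\Par(\mapt,y)=0$ for every $y\neq x_0$, while the nodes $xa$, $xb$, $xabw$, $xbbw$ appearing in the formula for $R_{r,1}(\mapt,x)$ all have positive length (here using $r\geq 3$). Hence every term vanishes.

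For $i\in\{2,\dots,r\}$, I would work directly from the explicit description~\eqref{eq:Parmapa4}, under which $\Par(\mapa_i,y)=1$ exactly when $y=a^{i-2}w_{r-1}^n a$ for some $n\geq 0$, where $w_{r-1}=ba^{r-2}$. Two combinatorial observations drive the argument: first, every such $y$ ends in the symbol $a$; second, the positions of $b$ inside $y$ form the finite arithmetic progression $\{i-2+k(r-1):0\leq k\leq n-1\}$, so any two consecutive $b$'s in $y$ are separated by $r-1\geq 2$ characters. The first observation yields $\Par(\mapa_i,xb)=0$ for every $x$, which kills the product term $\Par(\mapa_i,xa)\Par(\mapa_i,xb)$. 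The second observation prevents $y$ from containing $bb$ as a substring, so $\sum_{w\in\{a,b\}^{r-2}}\Par(\mapa_i,xbbw)=0$ as well.

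The remaining and hardest piece is to show $\sum_{w\in\{a,b\}^{r-2}}\Par(\mapa_i,xabw)=0$. For a nonzero contribution, $y=xabw$ would have to lie in the support of $\Par(\mapa_i,\cdot)$. The length identity $|y|=|x|+r$ forces
\[ |x|=i-r-1+n(r-1), \]
while requiring position $|x|+1$ of $y$ to be a $b$ forces $|x|+1\in\{i-2+k(r-1):0\leq k\leq n-1\}$, i.e., $|x|=i-3+k(r-1)$ for some $0\leq k\leq n-1$. Subtracting these two expressions yields $(n-k)(r-1)=r-2$. But $k\leq n-1$ gives $n-k\geq 1$, so the left side is at least $r-1$, whereas $r-2<r-1$ for $r\geq 3$. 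This arithmetic incompatibility (which is the main obstacle, and the only place the hypothesis $r\geq 3$ enters) shows no such $y$ exists, so the sum vanishes. Combining the three vanishing statements gives $R_{r,1}(\mapa_i,x)=0$ for every $x$ and every $i$, completing the argument.
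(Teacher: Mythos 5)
Your proof is correct, and it follows the same overall strategy as the paper: reduce to checking $R_{r,1}(\mapa_i,x)=0$ for each generator $\mapa_i$ (using Proposition~\ref{prop:PinkSubTail} and the closedness of $\tilde{B}_{r,1,\infty}$), handle $i=1$ trivially, and then use the explicit description~\eqref{eq:Parmapa4} of the support of $\Par(\mapa_i,\cdot)$ for $i\geq 2$.

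Where you diverge from the paper is in the detailed analysis of the summation terms. You split the argument: the $xbbw$ terms vanish because consecutive $b$'s in the support words $a^{i-2}w_{r-1}^n a$ are separated by $r-1\geq 2$ positions, so no $bb$ substring occurs; and the $xabw$ terms vanish via the length/position bookkeeping culminating in the equation $(n-k)(r-1)=r-2$, which has no solution with $n-k\geq 1$. The paper instead makes a single, unified observation that dispatches both families at once: every node $xw'$ arising in the sum has $w'$ of length $r$ with \emph{second} symbol $b$ (whether $w'=abw$ or $w'=bbw$), whereas every support node of length $\geq r$ ends in $ba^{r-1}$, whose second symbol within that length-$r$ suffix is $a$. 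Your version buys nothing extra but costs a more involved arithmetic argument for the $xabw$ half; the paper's "second symbol" trick is the cleaner route, and it is worth noticing that your two separate observations are really two manifestations of that single mismatch. One minor note: your $i=1$ case does not actually need $r\geq 3$; the nodes $xa,xb$ already have positive length regardless of $r$, so $\mapt$ acts trivially on all arguments of $R_{r,1}$ independently of that hypothesis.
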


\begin{proof}
By Proposition~\ref{prop:PinkSubTail}, we already have
$G^{\Pink}_{r,1,\infty}\subseteq B_{r,1,\infty}$. It suffices to show that
$\mapa_i\in\tilde{B}_{r,1,\infty}$ for each $i\in\{1,\ldots, r\}$.

Fix a node $x$ of the tree, and fix $i\in\{1,\ldots, r\}$.
We must show that $R_{r,1}(\mapa_i,x)\equiv 0\pmod{2}$,
where $R_{r,1}$ is as in Definition~\ref{def:MRroot2}.
For $i=1$, we have $\mapa_i=\mapt$, and $\Par(\mapt,y)=0$
for all of the nodes $y$ appearing in that definition.
Thus, we may assume for the rest of the proof that $i\geq 2$.

By equation~\eqref{eq:Parmapa4}, we must have $\Par(\mapa_i,xb)=0$,
and hence the first term in the sum defining $R_{r,1}(\mapa_i,x)$ is zero.
The remaining terms are all of the form $\Par(\mapa_i,xw')$, where $w'$ is a
word of length $r$ whose second symbol is $b$.
On the other hand, according to equation~\eqref{eq:Parmapa4},
all of the nodes $y$ for which $\Par(\mapa_i,y)=1$
either are $a^{i-1}$ (which has length less than $r$)
or else end with the word $ba^{r-1}$, whose second symbol is $a$.
Thus, every term in the sum defining $R_{r,1}(\mapa_i,x)$ is zero, as desired.
\end{proof}

\begin{thm}
\label{thm:PinkShort}
For any integer $r\geq 3$, we have
$G^{\Pink}_{r,1,\infty}=\tilde{B}_{r,1,\infty}$.
\end{thm}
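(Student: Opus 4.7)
The plan is to mirror the proofs of Theorems~\ref{thm:PinkLongTail} and~\ref{thm:PinkSpecial}: for each $n\geq 1$ I define a locally-tested subgroup $\tilde{B}'_{r,1,n}\subseteq\Aut(T_n)$ containing $\res_{\infty,n}(\tilde{B}_{r,1,\infty})$, and then show by induction on $n$ that $|\tilde{B}'_{r,1,n}|\leq |G^{\Pink}_{r,1,n}|$. Combined with Proposition~\ref{prop:PinkSubShort}, which supplies the reverse inclusion at every finite level, this forces equality at each level, and hence on the inverse limits by Lemma~\ref{lem:closedsubgps}.

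Specifically, set
\[ \tilde{B}'_{r,1,n}:=\{\sigma\in B'_{r,1,n}:\forall\, m<n-r \text{ and }\forall\, y\in\{a,b\}^m,\ R_{r,1}(\sigma,y)=0\}, \]
where $B'_{r,1,n}$ is the finite-level analogue defined in the proof of Theorem~\ref{thm:PinkLongTail} (specialized to $s=1$). The inclusion $\tilde{B}_{r,1,n}\subseteq\tilde{B}'_{r,1,n}$ is immediate from Definition~\ref{def:MRroot2}. The base case $n\leq r$ reads $\tilde{B}'_{r,1,n}=\Aut(T_n)$, whose order matches $|G^{\Pink}_{r,1,n}|$ by equation~\eqref{eq:pinksize}. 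For the inductive step with $n\geq r+1$, let $\tilde{S}_{r,1,n}$ denote the kernel of $\res_{n,n-1}\colon\tilde{B}'_{r,1,n}\to\tilde{B}'_{r,1,n-1}$, so that $|\tilde{B}'_{r,1,n}|\leq|\tilde{B}'_{r,1,n-1}|\cdot|\tilde{S}_{r,1,n}|$.

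The core computation is the order of $\tilde{S}_{r,1,n}$. For $\sigma\in Y_n=\ker(\res_{n,n-1}\colon\Aut(T_n)\to\Aut(T_{n-1}))$, all parities at levels strictly below $n-1$ vanish, so the only live restrictions come from level $m=n-r-1$; at strictly smaller $m$, every $\Par$-term appearing in $P^a_{r,1}(\sigma,x)$, $P^b_{r,1}(\sigma,x)$, or $R_{r,1}(\sigma,x)$ sits below level $n-1$, and the quadratic term $\Par(\sigma,xa)\Par(\sigma,xb)$ in $R_{r,1}$ vanishes as well because $r\geq 3$. For each node $x$ at level $n-r-1$, I group the $2^r$ free parities at level $n-1$ above $x$ into four pairwise disjoint sums
\[ S_{st}(\sigma,x):=\sum_{w\in\{a,b\}^{r-2}}\Par(\sigma,xstw)\in\ZZ/2\ZZ,\qquad s,t\in\{a,b\}. \]
The three conditions $P^a_{r,1}(\sigma,x)=P^b_{r,1}(\sigma,x)=R_{r,1}(\sigma,x)=0$ then collapse to
\[ S_{aa}+S_{ab}=0,\qquad S_{ba}+S_{bb}=0,\qquad S_{ab}+S_{bb}=0, \]
three linear forms in $(S_{aa},S_{ab},S_{ba},S_{bb})$ that are easily verified to be independent over $\FF_2$. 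Because the $S_{st}$ are built from disjoint pools of parities, this imposes exactly three independent restrictions per node at level $n-r-1$, giving $\log_2|\tilde{S}_{r,1,n}|=(2^r-3)\cdot 2^{n-r-1}=2^{n-1}-3\cdot 2^{n-r-1}$.

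The final step is a direct comparison with equation~\eqref{eq:pinksize}:
\[ \log_2|G^{\Pink}_{r,1,n}|-\log_2|G^{\Pink}_{r,1,n-1}|=(2^n-3\cdot 2^{n-r}+2)-(2^{n-1}-3\cdot 2^{n-r-1}+2)=2^{n-1}-3\cdot 2^{n-r-1}, \]
matching $\log_2|\tilde{S}_{r,1,n}|$ exactly, so induction delivers $|\tilde{B}'_{r,1,n}|\leq|G^{\Pink}_{r,1,n}|$. The main obstacle I anticipate is verifying the three-dimensionality of the constraint system at each node of level $n-r-1$: the assertion that the extra $R$-condition is genuinely independent of the two $P$-conditions already enforced by $B'_{r,1,n}$ is precisely what pins the growth rate of $\tilde{B}'_{r,1,n}$ to that of Pink's group, rather than producing a strictly larger or strictly smaller object. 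Everything else is a routine bookkeeping exercise on the formulas of Definitions~\ref{def:MRSiroot} and~\ref{def:MRroot2}.
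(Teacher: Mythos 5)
Your proof is correct and follows essentially the same inductive strategy as the paper: comparing $|\tilde{B}'_{r,1,n}|$ to Pink's group size via the kernel $\tilde{S}_{r,1,n}$ of $\res_{n,n-1}$ and matching the growth increments against equation~\eqref{eq:pinksize}. The only difference is that you establish the independence of the three constraints at each level-$(n-r-1)$ node abstractly, by rewriting them as linear forms $S_{aa}+S_{ab}$, $S_{ba}+S_{bb}$, $S_{ab}+S_{bb}$ in the four disjoint-pool sums $S_{st}$ over $\FF_2$, whereas the paper argues by sequentially designating forced parities ($\Par(\sigma,xa^r)$, $\Par(\sigma,xb^r)$, $\Par(\sigma,xba^{r-1})$); both yield exactly the same count $\log_2|\tilde{S}_{r,1,n}|=2^{n-1}-3\cdot 2^{n-r-1}$.
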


\begin{proof}
For each integer $n\geq 1$, define
\[ \tilde{B}'_{r,1,n}:= \{ \sigma\in B'_{r,1,n} : \forall m < n-r \text{ and } \forall x\in \{a,b\}^m,
R_{r,1}(\sigma,x)=0\} . \]
where we recall $R_{r,1}$ takes values in $\ZZ/2\ZZ$,
and where $B'_{r,s,n}$ is as in the proof of Theorem~\ref{thm:PinkLongTail}.
By Proposition~\ref{prop:PinkSubShort} and Definition~\ref{def:MRroot2}, we have
\[ G^{\Pink}_{r,1,n} \subseteq \tilde{B}_{r,1,n} \subseteq \tilde{B}'_{r,1,n} . \]
Thus, it suffices to show that $|\tilde{B}'_{r,1,n}| \leq |G^{\Pink}_{r,1,n}|$ for all $n\geq 1$.

We proceed by induction on $n$.
For $n\leq r$, we have $\tilde{B}'_{r,1,n}= \Aut(T_n)$, and hence
\[ \log_2 \big|\tilde{B}'_{r,1,n}\big| = \log_2\big|\Aut(T_n)\big| = 2^n-1= \log_2\big|G^{\Pink}_{r,1,n}\big|, \]
yet again recalling formula~\eqref{eq:pinksize} for $\log_2 |G^{\Pink}_{r,s,n}|$.

Now let $n>r$, and suppose $|\tilde{B}'_{r,1,n-1}|\leq |G^{\Pink}_{r,1,n-1}|$. 
Let $\tilde{S}_{r,1,n}$ denote the kernel of the map
$\res_{n,n-1} : \tilde{B}'_{r,1,n}\rightarrow \tilde{B}'_{r,1,n-1}$, so that
\[ \big|\tilde{B}'_{r,1,n}\big| = \big|\res_{n,n-1}(\tilde{B}'_{r,1,n})\big|\cdot \big|\tilde{S}_{r,1,n}\big|
\leq \big|\tilde{B}'_{r,1,n-1}\big|\cdot \big|\tilde{S}_{r,1,n}\big| .\]
As before, let $Y_n$ be the kernel of $\res_{n,n-1}:\Aut(T_n)\to\Aut(T_{n-1})$.
Then by the definition of $\tilde{B}'_{r,1,n}$ in terms of $B'_{r,1,n}$,
the subgroup $\tilde{S}_{r,1,n}$ is the set of $\sigma\in Y_n$ for which
condition~\eqref{eq:condition_B_lt} holds for all nodes $x$ at level $n-r-1$,
and for which the further condition
\begin{equation}
\label{eq:condition_B_st}
R_{r,1}(\sigma,x)=0\in \ZZ/2\ZZ  
\end{equation}
holds for nodes $x$ at all levels $0\leq m<n-r$.
Since $\sigma\in Y_n$ acts trivially on $T_{n-1}$, 
condition \eqref{eq:condition_B_st} is trivially satisfied for every node $y$ at levels $m<n-r-1$.

Thus, $\sigma\in Y_n$ belongs to $\tilde{S}_{r,1,n}$ if and only if
for all nodes $x$ at level $n-r-1$,
condition~\eqref{eq:longtaileven} holds, and in addition,
\begin{equation}\label{eq:st2}
\sum_{w\in \{a,b\}^{r-2}} \big( \Par(\sigma,xabw) + \Par(\sigma,xbbw) \big) \quad \text{is even}.
\end{equation}

Therefore, to determine whether $\sigma\in Y_n$ belongs to $\tilde{S}_{r,1,n}$,
then for each node $x$ at level $n-r-1$,
the values of $\Par(\sigma,xaw)$ for $w\in \{a,b\}^{r-1}$
can be arbitrary except for $\Par(\sigma,xa^{r})$,
which must be chosen so that the first sum in condition~\eqref{eq:longtaileven} is even.
Next, the values of $\Par(\sigma,xbbw)$ for $w\in \{a,b\}^{r-1}$
can be arbitrary except for $\Par(\sigma,xb^{r})$,
which must be chosen so that the sum in condition~\eqref{eq:st2} is even.
Finally, the values of $\Par(\sigma,xbaw)$ for $w\in \{a,b\}^{r-2}$ 
can be arbitrary except for $\Par(\sigma,xba^{r-1})$,
which must be chosen so that the second sum in condition~\eqref{eq:longtaileven} is even.
Thus, there are $3$ parity restrictions for each of the $2^{n-r-1}$ such nodes $x$,
all of which are independent of one another.
There are no restrictions on
the parities at the remaining  nodes at level $n-1$, and therefore
\[\log_2\big|\tilde{S}_{r,1,n}\big|= 2^{n-1}-3\cdot 2^{n-r-1}.\]

By equation~\eqref{eq:pinksize}, 
\begin{align*}
\log_2 \big|G^{\Pink}_{r,1,n}\big|-\log_2 \big|G^{\Pink}_{r,1,n-1}\big|
& = (2^n-3\cdot 2^{n-r}+2) -(2^{n-1}-3\cdot 2^{n-1-r}+2)\\
&=2^{n-1}-3\cdot 2^{n-r-1} = \log_2\big|\tilde{S}_{r,1,n}\big| .
\end{align*}
It follows that $\log_2 |G^{\Pink}_{r,1,n-1}| + \log_2|\tilde{S}_{r,1,n}| = \log_2 |G^{\Pink}_{r,1,n}|$, and hence
\[ \big|\tilde{B}'_{r,1,n}\big| \leq \big|\tilde{B}'_{r,1,n-1}\big| \cdot \big|\tilde{S}_{r,1,n}\big|
\leq \big|G^{\Pink}_{r,1,n-1}\big| \cdot \big|\tilde{S}_{r,1,n}\big| = \big|G^{\Pink}_{r,1,n}\big|, \]
as desired.
\end{proof}

\section{Obtaining the arboreal Galois groups}
\label{sec:obtain}

To simplify notation for the remainder of the paper, 
$\tilde{B}_{r,s,\infty}:=B_{r,s,\infty}$ and $\tilde{M}_{r,s,\infty}:=M_{r,s,\infty}$ when $r\geq 4$ and $s\geq 2$.
(On the other hand, when $(r,s)=(3,2)$ or when $s=1$,
we keep our existing definitions of $\tilde{B}_{r,s,\infty}$ and $\tilde{M}_{r,s,\infty}$
as in Sections~\ref{sec:Special} and \ref{sec:shorttail}, respectively.)
Thus, in all cases, our arboreal Galois groups are always contained in $\tilde{M}_{r,s,\infty}$,
and we have $\tilde{B}_{r,s,\infty} = G^{\Pink}_{r,s,\infty}$.
We also define a group $H$ and homomorphism
$\tilde{P}_{r,s}: \tilde{M}_{r,s,\infty}\to H$ to be
\begin{itemize}
\item
$H:=\ZZ/2\ZZ$ and $\tilde{P}_{r,s} := P_{r,s}$ when $r\geq 4$ and $s\geq 2$,
\item
$H:=\ZZ/2\ZZ\times \ZZ/2\ZZ$ and $\tilde{P}_{r,s} := P_{3,2}\times R_{3,2}$ when $(r,s)=(3,2)$, or
\item
$H:=\ZZ/2\ZZ$ and $\tilde{P}_{r,s} := R_{r,1}$ when $r\geq 3$ and $s=1$.
\end{itemize}

\subsection{The arithmetic Galois group over function fields}
\label{ssec:PinkGroupsArith}

Let  $k$ be an arbitrary field of characteristic not equal to $2$.
Let $K=k(t)$, let $x_0=t$, let $f(z)=z^2+c\in k[z]$ for which $0$ is preperiodic
with $r\geq 3$ and $s\geq 1$, and let $K_{\infty}$ be the resulting arboreal extension.
Let $k_{\infty}$ be the constant field extension in $K_{\infty}$, i.e.,
$k_{\infty}\subseteq K_{\infty}$ is the maximal algebraic extension of $k$ in $K_{\infty}$.
In this section we show how our groups $\tilde{M}_{r,s,\infty}$
related to Pink's group $G^{\arith}=\Gal(K_{\infty}/K)$ and to the extension $k_\infty / k$.

\begin{lemma}\label{lem:kinfty}
For any $r\geq 3$ and $s\geq 1$, we have
\[ k_\infty=\begin{cases}
	k(\zeta_4) & \text{ if } r\geq 4, s\geq 2\\
	k(\zeta_8) & \text{ if } (r,s) = (3,2) \\
	k(\sqrt{2}) & \text{ if } r \geq 3, s = 1.
\end{cases} \]
\end{lemma}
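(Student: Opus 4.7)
The plan is to prove both inclusions $L \subseteq k_\infty$ and $k_\infty \subseteq L$, where $L$ denotes the claimed right-hand side (so $L = k(\zeta_4)$, $L = k(\zeta_8)$, or $L = k(\sqrt{2})$ in the three respective cases). The first inclusion follows by exhibiting the generators of $L$ inside $K_\infty$, and the second by combining the embedding theorems of Sections~\ref{sec:zeta4longtail}--\ref{sec:shorttail} with the identification $\tilde{B}_{r,s,\infty} = G^{\Pink}_{r,s,\infty} \cong G^{\geom}$ from Section~\ref{sec:PinkGroupsGeom}.

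For the inclusion $L \subseteq k_\infty$, I would invoke the explicit constructions already established. When $r \geq 4$ and $s \geq 2$, Lemma~\ref{lem:pickfour} places $\zeta_4$ inside $K_\infty$. When $(r,s)=(3,2)$, the same Lemma~\ref{lem:pickfour} (which applies since $r > s \geq 2$) still yields $\zeta_4 \in K_\infty$, and Lemma~\ref{lem:root2special} yields $\sqrt{2} \in K_\infty$, whence $\zeta_8 = (1+\zeta_4)/\sqrt{2} \in K_\infty$. When $s = 1$ and $r \geq 3$, Lemma~\ref{lem:pickroot2} gives $\sqrt{2} \in K_\infty$. In every case the generators of $L$ are algebraic over $k$ and lie in $K_\infty$, so they lie in the maximal algebraic subextension $k_\infty$.

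For the reverse inclusion, I take an arbitrary $\tau \in \Gal(k_\infty/L)$ and lift it to some $\sigma \in G^{\arith} = \Gal(K_\infty/K)$ via the surjection $G^{\arith} \twoheadrightarrow \Gal(k_\infty/k)$ coming from the short exact sequence of Section~\ref{ssec:PinkSummary}. Since $\tau|_L = \mathrm{id}$, the lift $\sigma$ fixes $L$ and thus fixes each of the relevant generators among $\zeta_4$ and $\sqrt{2}$. The embedding theorems Theorem~\ref{thm:PRSembed}, Theorem~\ref{thm:R32embed}, and Theorem~\ref{thm:PR1embed} then express the action of $\sigma$ on these elements in terms of $\tilde{P}_{r,s}(\sigma)$, forcing $\tilde{P}_{r,s}(\sigma) = 0$. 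Hence $\sigma$ lies in the kernel $\tilde{B}_{r,s,\infty}$, which by Theorems~\ref{thm:PinkLongTail}, \ref{thm:PinkSpecial}, and \ref{thm:PinkShort} equals $G^{\Pink}_{r,s,\infty} \cong G^{\geom}$. Since $G^{\geom}$ is precisely the subgroup of $G^{\arith}$ acting trivially on $k_\infty$, we conclude $\tau = \sigma|_{k_\infty}$ is trivial, so $\Gal(k_\infty/L) = \{1\}$ and $k_\infty = L$.

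The main obstacle is essentially bookkeeping: one must check that the injection $G^{\arith} \hookrightarrow \tilde{M}_{r,s,\infty}$ carries the normal subgroup $G^{\geom}$ onto the full $\tilde{B}_{r,s,\infty}$, so that the kernel of $\tilde{P}_{r,s}$ restricted to $G^{\arith}$ equals $G^{\geom}$ rather than merely contains it. This compatibility is already built into the identifications established in Section~\ref{sec:PinkGroupsGeom}, so no new computation is required beyond tying these pieces together.
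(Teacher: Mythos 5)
Your proposal is correct and takes essentially the same approach as the paper's proof: both establish $L\subseteq k_\infty$ via the explicit constructions of Lemmas~\ref{lem:iroot}/\ref{lem:pickfour}, \ref{lem:root2special}, and \ref{lem:root2}/\ref{lem:pickroot2}, and both deduce $\Gal(k_\infty/L)=\{1\}$ by combining the embedding theorems (Theorems~\ref{thm:PRSembed}, \ref{thm:R32embed}, \ref{thm:PR1embed}) with the identification $\tilde{B}_{r,s,\infty}=G^{\Pink}_{r,s,\infty}\cong G^{\geom}$. The only difference is cosmetic: you argue at the level of individual elements (lift $\tau$, show it dies), whereas the paper packages the same reasoning as an injectivity claim about the map $\chi$ via a commutative diagram, but the underlying logic and inputs are identical.
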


\begin{proof}
Recall the definitions of $\tilde{P}_{r,s}$ and $H$ from the start of Section~\ref{sec:obtain}, 
and the definitions of $G^{\arith}=\Gal(K_{\infty}/K)$ and $G^{\geom}$ from Section~\ref{ssec:PinkSummary}.
Define
\[ \ell:=\begin{cases}
	k(\zeta_4) & \text{ if } r\geq 4, s\geq 2\\
	k(\zeta_8) & \text{ if } r = 3, s = 2\\
	k(\sqrt{2}) & \text{ if } r \geq 3, s = 1.\\
\end{cases} \]
By Lemmas \ref{lem:iroot}, \ref{lem:root2special}, and \ref{lem:root2},
we have $\ell \subseteq k_\infty$, so we can consider the homomorphism 
\[\chi:\Gal(k_\infty/k)\longrightarrow H,\]
defined by
\begin{equation}
\label{eq:chidef}
\chi: \sigma \mapsto \begin{cases}
	P & \text{ if } \sigma(\zeta_4)=(-1)^{P}\zeta_4 \text{ and } r\geq 4, s\geq 2\\
	(P,R) & \text{ if } \sigma(\zeta_4)=(-1)^{P}\zeta_4, \,\, \sigma(\sqrt{2})=(-1)^R\sqrt{2}, \text{ and } (r,s) = (3,2) \\
	R & \text{ if } \sigma(\sqrt{2})=(-1)^R\sqrt{2} \text{ and } r \geq 3, s = 1 .
\end{cases}
\end{equation}

Applying Theorems \ref{thm:MRSiroot} and \ref{thm:PRSembed} in the case $r\geq 4, s\geq 2$,
Theorems \ref{thm:R2root} and \ref{thm:R32embed} in the case $(r,s) = (3,2)$,
and Theorems \ref{thm:MRroot2} and \ref{thm:PR1embed} in the case $r\geq 3, s = 1$,
there is an equivariant injective homomorphism $\rho: G^{\arith}\hookrightarrow \tilde{M}_{r,s,\infty}$
making the following diagram commute.
By Theorems \ref{thm:PinkLongTail}, \ref{thm:PinkSpecial}, and \ref{thm:PinkShort},
the restricted map  $\rho: G^{\geom}\to \tilde{B}_{r,s,\infty}$ is an isomorphism. 

\begin{equation}
\label{eq:maindiagram}
\begin{tikzcd}
0 \arrow[r]
  & G^{\geom} \arrow[r] \arrow[d, "\wr"]
  & G^{\arith} \arrow[r] \arrow[d, hook, "\rho"]
  & \Gal(k_\infty/k) \arrow[r] \arrow[d, "\chi"]
  & 0
\\
0 \arrow[r]
  & \tilde{B}_{r,s,\infty} \arrow[r]
  & \tilde{M}_{r,s,\infty} \arrow[r, "\tilde{P}_{r,s}"]
  & H 
\end{tikzcd}
\end{equation}

Now consider the induced homomorphism
$\bar{\rho}: G^{\arith}/G^{\geom}\to\tilde{M}_{r,s,\infty}/\tilde{B}_{r,s,\infty}$ given by 
\[\bar{\rho}(\sigma G^{\geom})=\rho(\sigma)\tilde{B}_{r,s,\infty}.\]
We claim that $\bar{\rho}$ is injective. Indeed, suppose
$\bar{\rho}(\sigma G^{\geom}) = e \tilde{B}_{r,s,\infty}$,
and hence $\rho(\sigma)\in \tilde{B}_{r,s,\infty}$.
Since $\rho$ is injective and $\rho(G^{\geom})=\tilde{B}_{r,s,\infty}$,
it follows that $\sigma\in G^{\geom}$, proving that $\bar{\rho}$ has trivial kernel, as claimed.
Consider the following commutative diagram:
\[ \begin{tikzcd}
	G^{\arith}/G^{\geom} \arrow[r, "\sim"] \arrow[d, hook, "\bar{\rho}"]
	& \Gal(k_\infty/k) \arrow[d, "\chi"]
	\\
	\tilde{M}_{r,s,\infty}/\tilde{B}_{r, s,\infty} \arrow[r]
	& H.
\end{tikzcd} \]
Since $\bar{\rho}$ and $\tilde{M}_{r,s,\infty}/\tilde{B}_{r, s,\infty}\to H$ are injective,
and $G^{\arith}/G^{\geom}\to \Gal(k_\infty/k)$ is an isomorphism, the map $\chi$ must also be injective.
Thus, $\Gal(k_\infty/\ell) = \ker\chi$ is trivial, whence $k_\infty=\ell$.
\end{proof}

\begin{lemma}
\label{lem:Ponto}
For any $r\geq 3$ and $s\geq 1$, 
the homomorphism $\tilde{P}_{r,s}:\tilde{M}_{r,s,\infty} \to H$ is surjective.
\end{lemma}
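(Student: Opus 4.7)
The plan is to use the commutative diagram \eqref{eq:maindiagram} established in Lemma \ref{lem:kinfty}, together with the arithmetic obstructions in Propositions \ref{prop:root2notQc} and \ref{prop:rootsnotQc}. Since $\tilde{M}_{r,s,\infty}$ is defined purely combinatorially in terms of $T_\infty$ and is independent of any choice of field, it suffices to exhibit a single setup $(K,x_0)$ in which the composite $\tilde{P}_{r,s}\circ\rho:G^{\arith}\to H$ is surjective. I would take $c\in\overline{\mathbb{Q}}$ to be any root of the Misiurewicz polynomial $F_{r,s}(x)=f_x^r(0)+f_x^s(0)$ for which the iterates $\{f_c^i(0):0\le i\le r\}$ are pairwise distinct, and set $k:=\mathbb{Q}(c)$, $K:=k(t)$, $x_0:=t$. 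Such a root $c$ exists because the roots failing this distinctness condition are precisely those lying on strictly smaller Misiurewicz loci, of which there are only finitely many.

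The key step is to verify that the injective map $\chi:\Gal(k_\infty/k)\to H$ of Lemma \ref{lem:kinfty} is in fact an isomorphism, which amounts to checking $[k_\infty:k]=|H|$ in each of the three regimes. When $r\ge 4$ and $s\ge 2$, we have $k_\infty=k(\zeta_4)$ and $|H|=2$, and Proposition \ref{prop:rootsnotQc} gives $\sqrt{-1}\notin k$. When $(r,s)=(3,2)$, we have $k_\infty=k(\zeta_8)=k(\zeta_4,\sqrt{2})$ and $|H|=4$, and Proposition \ref{prop:rootsnotQc} gives that none of $\sqrt{-1},\sqrt{2},\sqrt{-2}$ lie in $k$, forcing $[k(\zeta_4,\sqrt{2}):k]=4$. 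When $r\ge 3$ and $s=1$, we have $k_\infty=k(\sqrt{2})$ and $|H|=2$, and Proposition \ref{prop:root2notQc} gives $\sqrt{2}\notin k$. In every case, $\chi$ is a bijection.

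To close the argument, I would note that because $K=k(t)$ is a rational function field while $k_\infty/k$ is algebraic, $k_\infty\cap K=k$, so the top row of diagram \eqref{eq:maindiagram} is exact and the restriction $G^{\arith}\to\Gal(k_\infty/k)$ is surjective. Commutativity of \eqref{eq:maindiagram} then identifies $\tilde{P}_{r,s}\circ\rho$ with $\chi$ composed with this restriction, and both factors are surjective; hence $\tilde{P}_{r,s}\circ\rho:G^{\arith}\to H$ is surjective. Since $\rho(G^{\arith})\subseteq\tilde{M}_{r,s,\infty}$, the map $\tilde{P}_{r,s}$ is itself surjective, as required. There is no substantive obstacle here: the entire argument is a degree count bootstrapping off Lemma \ref{lem:kinfty}, and the distinctness hypothesis on the iterates of $0$ is built into Propositions \ref{prop:root2notQc} and \ref{prop:rootsnotQc} through the $2$-adic valuation estimates of Section \ref{sec:preliminary}.
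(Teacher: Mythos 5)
Your proof is correct and follows essentially the same route as the paper's: take a root $c$ of the Misiurewicz polynomial $F_{r,s}$ with $\{f_c^i(0):0\le i\le r\}$ distinct, set $k=\QQ(c)$, use Propositions~\ref{prop:root2notQc} and~\ref{prop:rootsnotQc} to see that $\chi$ is onto $H$, and then chase the commutative diagram~\eqref{eq:maindiagram} to conclude that $\tilde{P}_{r,s}$ is surjective. Your writeup supplies a couple of details that the paper elides (the elementary argument that $\sqrt{-1},\sqrt{2},\sqrt{-2}\notin k$ forces $[k(\zeta_8):k]=4$ in the $(3,2)$ case, and the observation that $k_\infty\cap k(t)=k$ makes the top row exact), but these are refinements, not a different argument.
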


\begin{proof}
Let $c\in\Qbar$ be a root of the polynomial $F_{r,s}(x)=f_x^r(0)+f_x^s(0)\in\QQ[x]$
such that the $r+1$ iterates $\{f_c^i(0) \, | \, 0\leq i\leq r\}$ are all distinct.
Such $c$ does indeed exist;
as noted in the proof of Lemma~\ref{lem:valFrs},
$c$ is a root a Misiurewicz polynomial, 
as described in Section~1 of \cite{Gok20} or equation~(1) of \cite{BG23}.
Let $k=\QQ(c)$.

If $s\geq 2$, then by Proposition~\ref{prop:rootsnotQc}, we have $[k(\zeta_8):k]=4$;
and if $s=1$, then by Proposition~\ref{prop:root2notQc}, we have $[k(\sqrt{2}):k]=2$.
Therefore, the homomorphism $\chi$ of equation~\eqref{eq:chidef} is surjective.
In diagram~\eqref{eq:maindiagram}, then, the composed map $G^{\arith}\to H$
is surjective as well, and hence so is $\tilde{P}_{r,s}$.
\end{proof}

\begin{remark}
Lemma~\ref{lem:Ponto} can also be proven directly from the definitions of the
groups $\tilde{B}_{r,s,\infty}$ and $\tilde{M}_{r,s,\infty}$, by finding explicit elements
of $\tilde{M}_{r,s,\infty}$ mapping to each element of $H$ under $\tilde{P}_{r,s}$.
\end{remark}

\begin{thm}\label{thm:condMisGarith}
Let $K=k(t)$, let $x_0=t$, let $f(z)=z^2+c\in k[z]$ for which $0$ is preperiodic
with $r\geq 3$ and $s\geq 1$, let $K_{\infty}$ be the resulting arboreal extension,
and let $G^{\arith}=\Gal(K_\infty/K)$.
Then the following are equivalent:
\begin{enumerate}
	\item $G^{\arith}\cong\tilde{M}_{r,s,\infty}$
	\item $\begin{cases}
		[k(\zeta_4):k]=2 & \text{ if } r\geq 4, s\geq 2 \\
		[k(\zeta_8):k]=4 & \text{ if } r= 3, s=2 \\
		[k(\sqrt{2}):k]=2 & \text{ if } r\geq 3, s=1.
	\end{cases}$
\end{enumerate}
\end{thm}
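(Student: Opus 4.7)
The plan is to extract the theorem from the commutative diagram~\eqref{eq:maindiagram} once it is combined with the surjectivity statement of Lemma~\ref{lem:Ponto} and the identification of $k_\infty$ in Lemma~\ref{lem:kinfty}. First I would observe that since the embedding $\rho\colon G^{\arith}\hookrightarrow \tilde{M}_{r,s,\infty}$ restricts to an isomorphism $G^{\geom}\to\tilde{B}_{r,s,\infty}$ (Theorems~\ref{thm:PinkLongTail}, \ref{thm:PinkSpecial}, \ref{thm:PinkShort}), the map $\rho$ is an isomorphism if and only if the induced map $\bar{\rho}\colon G^{\arith}/G^{\geom}\to \tilde{M}_{r,s,\infty}/\tilde{B}_{r,s,\infty}$ on quotients is surjective. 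A standard five-lemma style chase on the rows of~\eqref{eq:maindiagram}, together with the fact that the left-hand vertical map is an isomorphism, gives this equivalence; and as noted in the proof of Lemma~\ref{lem:kinfty}, $\bar{\rho}$ is automatically injective.

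Next, by Lemma~\ref{lem:Ponto} the homomorphism $\tilde{P}_{r,s}$ is surjective, so it identifies the quotient $\tilde{M}_{r,s,\infty}/\tilde{B}_{r,s,\infty}$ with $H$. Under the canonical identification $G^{\arith}/G^{\geom}\cong\Gal(k_\infty/k)$, the right square of~\eqref{eq:maindiagram} then shows that $\bar{\rho}$ becomes the homomorphism $\chi\colon\Gal(k_\infty/k)\to H$ defined in equation~\eqref{eq:chidef}. Since $\chi$ is already known to be injective, condition~(1) is therefore equivalent to the numerical equality $[k_\infty:k]=|H|$.

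Finally, to translate this numerical equality into condition~(2), I would invoke Lemma~\ref{lem:kinfty} to identify $k_\infty$ in each of the three cases ($k(\zeta_4)$, $k(\zeta_8)$, or $k(\sqrt{2})$) and read off the order of $H$ from the definition at the start of Section~\ref{sec:obtain}: $|H|=2$ when $r\geq 4,\,s\geq 2$; $|H|=4$ when $(r,s)=(3,2)$; and $|H|=2$ when $r\geq 3,\,s=1$. In each case this is exactly the condition listed in~(2). There is no substantive obstacle: the ``hard work'' was done in establishing the diagram~\eqref{eq:maindiagram} together with Lemmas~\ref{lem:kinfty} and~\ref{lem:Ponto}; the present argument is just a bookkeeping exercise, with the one point of care being to match $|H|$ against $[k_\infty:k]$ correctly in the special case $(r,s)=(3,2)$, where both are equal to $4$ rather than $2$.
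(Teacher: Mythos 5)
Your proof is correct and follows essentially the same route as the paper: the paper's proof simply observes that Lemma~\ref{lem:Ponto} upgrades the bottom row of diagram~\eqref{eq:maindiagram} to a short exact sequence, from which it concludes directly that $\rho$ is an isomorphism if and only if $\chi$ is. You spell out the diagram chase and the translation of ``$\chi$ is an isomorphism'' into the numerical condition $[k_\infty:k]=|H|$ in somewhat more detail, but these are exactly the implicit steps in the paper's brief argument.
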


\begin{proof}
By Lemma~\ref{lem:Ponto}, the diagram~\eqref{eq:maindiagram}
in the proof of Lemma~\ref{lem:kinfty} expands to the following commutative
diagram with both rows exact:
\[ \begin{tikzcd}
	0 \arrow[r]
	& G^{\geom} \arrow[r] \arrow[d, "\wr"]
	& G^{\arith} \arrow[r] \arrow[d, hook, "\rho"]
	& \Gal(k_\infty/k) \arrow[r] \arrow[d, hook, "\chi"]
	& 0
	\\
	0 \arrow[r]
	& \tilde{B}_{r,s,\infty} \arrow[r]
	& \tilde{M}_{r,s,\infty} \arrow[r, "\tilde{P}_{r,s}"]
	& H \arrow[r]
	& 0,
\end{tikzcd} \]
Therefore the map $\rho$ is an isomorphism if and only if the map $\chi$ is an isomorphism. 
\end{proof}


\subsection{Conditions over general fields}

Throughout this section, let $k$ be a field of characteristic not equal to $2$,
let $f(z)=z^2+c\in k[z]$, and let $x_0\in k$. For each $i\geq 1$, define
\begin{equation}
\label{eq:Didef}
D_i:=\begin{cases}
	x_0-c & \text{ if } i=1,\\
	f^i(0)-x_0 & \text{ if } i\geq 2.
\end{cases}
\end{equation}
It is well known (see \cite[Proposition~3.2]{AHM} and also, the proof of  \cite[Lemma~7.2]{BGJT2}) 
that $\sqrt{D_i}\in k(f^{-i}(x_0))$ for every $i\geq 1$,
since the discriminant of the polynomial $f^i(z)-x_0$ is a square in $k$ times $D_i$.

\begin{lemma}\label{lem:disc_condition}
With notation as above, let
$K_{x_0,n}:=k(f^{-n}(x_0))$ and $G_n:=\Gal(K(f^{-n}(x_0))/k)$.
For each $n\geq 1$, the following are equivalent.
\begin{enumerate}
	\item $G_n\cong \Aut(T_n)$.
	\item $[ k(\sqrt{D_1},\ldots,\sqrt{D_{n}}) : k ] =2^n$.
\end{enumerate}
\end{lemma}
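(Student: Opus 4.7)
The plan is to induct on $n$. The base case $n=1$ is immediate: $K_{x_0,1}=k(\sqrt{D_1})$ has degree $2$ over $k$ iff $D_1$ is a non-square iff $G_1 \cong \ZZ/2\ZZ = \Aut(T_1)$.

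For the inductive step, write $L_m := k(\sqrt{D_1}, \dots, \sqrt{D_m})$, and observe via Proposition~\ref{prop:key} that $\sqrt{D_n} = \pm \prod_{y \in f^{-(n-1)}(x_0)} \sqrt{y-c}$. The direction $(1) \Rightarrow (2)$ is then relatively painless: restriction gives $G_{n-1} = \Aut(T_{n-1})$, so $[L_{n-1}:k] = 2^{n-1}$ by induction; the tree automorphism $\sigma \in \Aut(T_n) = G_n$ that acts trivially on $T_{n-1}$ but swaps exactly one pair of level-$n$ siblings fixes $L_{n-1}$ yet negates $\sqrt{D_n}$, whence $[L_n:L_{n-1}]=2$.

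For the harder direction $(2) \Rightarrow (1)$, the hypothesis forces $[L_{n-1}:k] = 2^{n-1}$, and induction gives $G_{n-1} = \Aut(T_{n-1})$. Let $N := \Gal(K_{x_0,n}/K_{x_0,n-1})$. Since $K_{x_0,n} = K_{x_0,n-1}(\{\sqrt{y-c}\}_{y \in X_{n-1}})$ with $X_{n-1}$ the set of level-$(n-1)$ nodes, Kummer theory embeds $N$ into $(\FF_2)^{X_{n-1}}$ via $\sigma \mapsto (c_y(\sigma))_y$ with $\sigma(\sqrt{y-c}) = (-1)^{c_y(\sigma)}\sqrt{y-c}$; a short calculation shows the conjugation action of $G_{n-1}$ on $N$ is coordinate permutation, so $N$ is an $\Aut(T_{n-1})$-invariant subspace. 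Next I would rule out $N \subseteq V_0$, where $V_0$ denotes the sum-zero subspace. If $N \subseteq V_0$, then $\sqrt{D_n}$ is fixed by $N$, hence lies in $K_{x_0,n-1}$; but the abelianization $\Aut(T_{n-1})^{\mathrm{ab}} \cong (\FF_2)^{n-1}$ (with the maps $\sigma \mapsto \sum_{|w|=i-1}\Par(\sigma,w)$ as independent characters) shows $L_{n-1}$ is the maximal elementary abelian $2$-subextension of $K_{x_0,n-1}/k$, so every degree-$2$ subfield of $K_{x_0,n-1}$ sits inside $L_{n-1}$, forcing $\sqrt{D_n} \in L_{n-1}$ and contradicting $[L_n:L_{n-1}]=2$.

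The main obstacle is the final step: deducing $N = (\FF_2)^{X_{n-1}}$ from the facts that $N$ is $\Aut(T_{n-1})$-invariant and not contained in $V_0$. I would use that $\Aut(T_{n-1})$ is a $2$-group, so in characteristic $2$ every nonzero $\FF_2[\Aut(T_{n-1})]$-module has a fixed vector, and hence every simple such module is trivial. It follows that every maximal proper invariant subspace of $M := (\FF_2)^{X_{n-1}}$ has one-dimensional trivial quotient, and so is the kernel of an $\Aut(T_{n-1})$-invariant linear functional on $M$. By transitivity of $\Aut(T_{n-1})$ on $X_{n-1}$, the only such functional (up to scalar) is the sum of coordinates, whose kernel is $V_0$; thus $V_0$ is the unique maximal proper invariant subspace, and $N \not\subseteq V_0$ forces $N = M$. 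Then $|G_n| = |G_{n-1}|\cdot|N| = 2^{2^{n-1}-1}\cdot 2^{2^{n-1}} = 2^{2^n-1} = |\Aut(T_n)|$, completing the induction.
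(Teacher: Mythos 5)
The paper offers no proof of its own for this lemma; it simply cites several standard references (Odoni, Stoll, Jones's survey, Bridy--DeMark, and the authors' own earlier paper). Your argument is a correct, self-contained reconstruction of the standard proof found in those sources: the inductive structure, the identification via Proposition~\ref{prop:key} of $\sqrt{D_n}$ as $\pm\prod_{y\in f^{-(n-1)}(x_0)}\sqrt{y-c}$, the Kummer-theoretic embedding of $N=\Gal(K_{x_0,n}/K_{x_0,n-1})$ as an $\Aut(T_{n-1})$-submodule of the permutation module $(\FF_2)^{X_{n-1}}$, the use of the abelianization $\Aut(T_{n-1})^{\mathrm{ab}}\cong(\FF_2)^{n-1}$ to identify $L_{n-1}$ with the maximal elementary abelian $2$-subextension (so as to rule out $N\subseteq V_0$), and the $2$-group module fact that the sum-zero hyperplane $V_0$ is the unique maximal proper submodule. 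All of these steps check out, including the order count $|G_{n-1}|\cdot|N|=2^{2^{n-1}-1}\cdot 2^{2^{n-1}}=2^{2^n-1}=|\Aut(T_n)|$. This is, in substance, exactly Stoll's argument (the reference the paper is chiefly deferring to), so the two approaches coincide.
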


\begin{proof}
This is a standard result in the study of arboreal Galois representations.
For proofs of various versions of it, see, for example,
 \cite[Lemma 4.2]{odoni_realizing},
 \cite[Lemmas 1.5 and 1.6]{Stoll},
 \cite[Section 2.2]{Jones_survey},
 \cite[Proposition 5.3]{BD}, or
 \cite[Lemma 7.2]{BGJT2}.
 \end{proof}


We can now prove our fully general version of Theorem~\ref{thm:condition}.

\begin{thm}\label{thm:mainthm}
Let $k$ be a field of characteristic not equal to $2$,
and let $f(z)=z^2+c\in k[z]$ such that $0$ is strictly preperiodic under $f$.
Let $r>s\geq 1$ be minimal so that $f^r(0)=-f^s(0)$, and assume $r\geq 3$.
Let $x_0\in k$, and define $K_{x_0,n}=k(f^{-n}(0))$, $K_{x_0,\infty}=\bigcup_{n=1}^\infty K_{x_0,n}$,
$G_{x_0,n}=\Gal(K_{x_0,n}/k)$, and $G_{x_0,\infty}=\Gal(K_{x_0,\infty}/k)$.
Further, define $D_1,\dots, D_r\in k$ as in equation~\eqref{eq:Didef},
and also define
\[\gamma:=\begin{cases}
	\sqrt{2} & \text{ if } r\geq 3 \text{ and } s=1,\\
	\zeta_8 & \text{ if } r=3 \text{ and } s=2,\\
	\zeta_4 & \text{ if } r\geq 4 \text{ and } s\geq 2,
\end{cases}
\quad\text{and}\quad
e:=\begin{cases}
 		2 & \text{ if } r=3 \text{ and } s=2, \\
 		1 & \text{ otherwise.}
\end{cases} \]
Then the following are equivalent.
\begin{enumerate}
	\item $[k(\gamma, \sqrt{D_1}, \dots, \sqrt{D_r}):k]= 2^{r+e}$.
	\item $[K_{x_0,r+e}:k]=|\tilde{M}_{r,s,r+e}|$.
	\item $G_{x_0,r+e}=\tilde{M}_{r,s,r+e}$.
	\item $G_{x_0,n}=\tilde{M}_{r,s,n}$ for all $n\geq 1$.
	\item $G_{x_0,\infty}\cong \tilde{M}_{r,s,\infty}$.
\end{enumerate}
\end{thm}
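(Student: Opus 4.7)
The plan is a cyclic proof $(5)\Rightarrow(4)\Rightarrow(3)\Rightarrow(2)\Rightarrow(1)\Rightarrow(5)$, with most implications formal and the principal work concentrated in relating (1) to (3) and in propagating maximality from level $r+e$ to all levels.

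The chain $(5)\Rightarrow(4)\Rightarrow(3)$ is immediate from the restriction homomorphisms $\res_{\infty,n}$, while $(3)\Leftrightarrow(2)$ follows from Galois theory once one recalls that Theorems~\ref{thm:PRSembed}, \ref{thm:R32embed}, and \ref{thm:PR1embed} give an embedding $G_{x_0,n}\hookrightarrow\tilde{M}_{r,s,n}$ for every $n$; equality of orders forces equality of groups.

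For $(2)\Leftrightarrow(1)$, I would set $L:=k(\gamma,\sqrt{D_1},\dots,\sqrt{D_r})$. Because $\sqrt{D_i}\in K_{x_0,i}$ by the standard discriminant identity, and $\gamma\in K_{x_0,r+e}$ by Lemma~\ref{lem:iroot}, Lemma~\ref{lem:root2special}, or Lemma~\ref{lem:root2} (according to the regime), we have $L\subseteq K_{x_0,r+e}$. Combining $\tilde{P}_{r,s}$ with the parity characters that record the action on each $\sqrt{D_i}$ gives a homomorphism $\Phi:\tilde{M}_{r,s,r+e}\to (\ZZ/2\ZZ)^{r+e}$, with target of order $2^{r+e}$ exactly because $e$ was defined so that $\gamma$ first appears at level $r+e$. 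The key computation is to show that $\Phi$ is surjective with $|\ker\Phi|=|\tilde{M}_{r,s,r+e}|/2^{r+e}$, i.e.\ that $\ker\Phi$ is the Frattini subgroup of $\tilde{M}_{r,s,r+e}$, which can be extracted from Pink's cardinality formula \eqref{eq:pinksize} together with the explicit actions of $\tilde{P}_{r,s}$. Given this, $(1)\Leftrightarrow(2)$ is immediate; moreover any subgroup of $\tilde{M}_{r,s,r+e}$ that surjects onto its Frattini quotient must be the whole group, yielding $(1)\Rightarrow(3)$.

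The substantive step is $(3)\Rightarrow(5)$, which I would prove by induction on $n\ge r+e$: assuming $G_{x_0,n}=\tilde{M}_{r,s,n}$, one studies the kernel of $\res_{n+1,n}:\tilde{M}_{r,s,n+1}\twoheadrightarrow\tilde{M}_{r,s,n}$, which is elementary abelian with order given by the level-by-level difference in \eqref{eq:pinksize}. Its characters are detected by the square roots of elements of the form $f^{n+1}(0)$ shifted against preimages at level $n$, and the inductive step reduces to proving that these new square roots are linearly independent over $K_{x_0,n}$ modulo squares. The main obstacle is the simultaneous treatment of the three regimes ($r\ge 4$ and $s\ge 2$; $(r,s)=(3,2)$; $r\ge 3$ and $s=1$), each with a different Frattini structure for $\tilde{M}_{r,s,n}$ arising from the extra relations imposed by $R_{3,2}$ or $R_{r,1}$, so that a slightly different accounting is required in each case. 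The argument in the case $(r,s)=(3,2)$ is expected to be the most delicate, reflecting the additional subtleties that already forced the introduction of $\tilde{M}_{3,2,\infty}$ as a proper subgroup of $M_{3,2,\infty}$.
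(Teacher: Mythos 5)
Your logical architecture is valid, and I appreciate that you've identified where the weight of the argument falls. However, your proposal diverges substantially from the paper's proof in a way that leaves two genuine gaps.

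The paper handles the hard direction $(1)\Rightarrow(5)$ by first invoking Theorem~\ref{thm:condMisGarith} to get $\tilde{M}_{r,s,\infty}\cong G^{\arith}$ over the function field $k(t)$, then invoking the already-established transfer result~\cite[Theorem 4.6]{BGJT1} (together with Lemma~\ref{lem:disc_condition}) to move this maximality statement to the arbitrary base point $x_0$. That reference carries the entire inductive burden. You do not cite it; instead you try to reconstitute it from scratch via a Frattini argument at level $r+e$ followed by a level-by-level induction. That is not what the paper does, and the reconstruction is not actually carried out.

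Concretely, the gaps are these. First, you assert that $\ker\Phi$ is the Frattini subgroup of $\tilde{M}_{r,s,r+e}$, i.e.\ that the Frattini quotient of $\tilde{M}_{r,s,r+e}$ has order exactly $2^{r+e}$. Since the target $(\ZZ/2\ZZ)^{r+e}$ is elementary abelian, you get $\ker\Phi\supseteq\Phi(\tilde{M}_{r,s,r+e})$ for free, but the Frattini argument needs equality (or at least $\ker\Phi\subseteq\Phi$), and this requires showing that the Frattini quotient is not larger than $2^{r+e}$. That is a nontrivial structural claim about these groups — different from the analogous well-known fact for $\Aut(T_n)$ — and "can be extracted from Pink's cardinality formula" is not a proof. (For $n\le r$ it is Lemma~\ref{lem:disc_condition}, since there $\tilde{M}_{r,s,n}=\Aut(T_n)$, but your $\Phi$ lives at level $r+e>r$.) Second, and more seriously, your $(3)\Rightarrow(5)$ step is a one-sentence sketch that defers the substance to "proving that these new square roots are linearly independent over $K_{x_0,n}$ modulo squares." That linear-independence statement is the entire content of the theorem; it requires a ramification or specialization argument, and you give none. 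This is exactly what~\cite[Theorem 4.6]{BGJT1} supplies.

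Finally, a small point: for $(3)\Rightarrow(1)$, the paper gives a short direct argument (restrict to level $r$ to get $\Aut(T_r)$, apply Lemma~\ref{lem:disc_condition}, then count to deduce $[k(\gamma):k]=2^e$ and linear disjointness), which does not need any Frattini-quotient identification. You would be well-served to adopt that for the reverse direction and concentrate your remaining effort on justifying the citation of the transfer theorem for the forward direction, rather than trying to reprove it.
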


\begin{proof}
The implications (5)$\Leftrightarrow$(4)$\Rightarrow$(3)$\Leftrightarrow$(2) are trivial.
Thus, it suffices to show  (1)$\Rightarrow$(5) and (3)$\Rightarrow$(1). Define
\[ L:=k(\sqrt{D_1},\dots,\sqrt{D_r}).\]
Note that $r+e$ is the smallest integer $n$ for which
Lemmas \ref{lem:iroot}, \ref{lem:root2special}, and \ref{lem:root2} force $\gamma\in K_{x_0,n}$.
In addition, by Theorems~\ref{thm:PRSembed}, \ref{thm:R32embed}, and~\ref{thm:PR1embed},
$r+e$ is also the smallest integer $n$ for which
the quotient $\tilde{P}_{r,s,n}:\tilde{M}_{r,s,n}\to H$
(of $\tilde{P}_{r,s}$ to the bottom $n$ levels of the tree) is onto.

First suppose (1) holds, so that $[L(\gamma):k]=2^{r+e}$.
Then because $[k(\gamma):k]\leq 2^e$ and $[L(\gamma):k(\gamma)]\leq 2^r$,
we must have both $[k(\gamma):k]= 2^e$ and $[L(\gamma):k(\gamma)]= 2^r$.
Now since $[k(\gamma):k]= 2^e$, Theorem~\ref{thm:condMisGarith} tells us that
$\tilde{M}_{r,s,\infty}\cong G^{\arith}$, where
$G^{\arith}=\Gal(k(t)_{\infty}/k(t))$.

On the other hand, by Lemma~\ref{lem:disc_condition}, we have 
\[ \Gal\big( k(\gamma) \cdot K_{x_0,r} /k(\gamma)\big) \cong \Aut(T_r)\cong \tilde{M}_{r,s,r},\]
and therefore,
\begin{equation}
\label{eq:OldBGJTthm}
\big| \Gal\big( k(\gamma) \cdot K_{x_0,r} /k \big) \big|
= \big| \Gal\big( k(\gamma) \cdot K_{x_0,r} /k(\gamma)\big) \big| \cdot [k(\gamma):k]
= \big|\tilde{M}_{r,s,r}\big|\cdot [k(\gamma):k].
\end{equation}

Recall that the (strict) forward orbit $\{f^i(0) : i\geq 1\}$ of $0$ has cardinality $r$.
In addition, since $k_\infty = k(\gamma)$ by Lemma~\ref{lem:kinfty},
the compositum of all degree~$2$ extensions of $k$ contained in $k_{\infty}$
is $k_{\infty}=k(\gamma)$ itself.
These observations, including equation~\eqref{eq:OldBGJTthm}, provide precisely the
hypotheses of \cite[Theorem 4.6]{BGJT1}, and hence
$G_{x_0,\infty}\cong G^{\arith}\cong \tilde{M}_{r,s,\infty}$, proving statement (5).

Finally, suppose statement (3) holds. Then restricting to level $r$, we have
\[ G_{x_0,r} \cong \tilde{M}_{r,s,r} = \tilde{B}_{r,s,r} = G^{\Pink}_{r,s,r} = \Aut(T_r) . \]
(The last equality here holds by equation~\eqref{eq:pinksize}, and the others hold because
$\tilde{B}_{r,s,r} \subseteq \tilde{M}_{r,s,r} \subseteq  \Aut(T_r)$.)
Thus, Lemma~\ref{lem:disc_condition} implies that $[L:k]=2^r$.

As noted at the start of this proof, we have $\gamma\in K_{x_0,r+e}$,
and $\tilde{P}_{r,s,r+e}:\tilde{M}_{r,s,r+e}\to H$ is onto, so that
$| \tilde{M}_{r,s,r+e}| /  |\tilde{B}_{r,s,r+e}| = |H|=2^e$.
By condition~(3), then, we have
\begin{multline*}
\big| \tilde{B}_{r,s,r+e} \big| \cdot [k(\gamma) : k]
= [K_{x_0,r+e} : k(\gamma)] \cdot [k(\gamma) : k] 
\\
= [K_{x_0,r+e} : k] = \big| G_{x_0,r+e} \big| = \big| \tilde{M}_{r,s,r+e} \big| 
= 2^e \big| \tilde{B}_{r,s,r+e} \big|,
\end{multline*}
and hence $[k(\gamma):k]=2^e$.

In addition, since $L\subseteq K_{x_0,r}$ and $G_{x_0,r}\cong \tilde{B}_{r,s,r}$,
we must have $k(\gamma)\cap K_{x_0,r}=k$, and therefore $k(\gamma)\cap L = k$. Thus,
$[L(\gamma):L] = [k(\gamma):k]$, and hence
\[ [L(\gamma):k] = [L(\gamma):L] \cdot [L:k]
= [k(\gamma):k] \cdot [L:k]= 2^{e} \cdot 2^r = 2^{r+e} ,\]
proving statement (1).
\end{proof}

\begin{remark}
We quoted \cite[Theorem 4.6]{BGJT1} in the proof of Theorem~\ref{thm:mainthm} above.
That result is stated under the assumption that $k$ is a number field,
but as noted in \cite{BGJT1} just before its statement, this assumption is needed only so that
$\ch k\neq 2$, and so that there are only finitely many extensions of $k$ of degree $2$ in $k_{\infty}$;
but we know both of these facts already in the proof of Theorem~\ref{thm:mainthm}.

There is also a typographical error in \cite{BGJT1} when defining the length of the forward orbit of $0$.
This quantity is defined in the statement of~\cite[Lemma~4.1]{BGJT1} as $|\{f^i(0) : i\geq 0\}|$, but that
should have been the \emph{strict} forward orbit, with $i\geq 1$. Indeed, the proof of \cite[Lemma~4.1]{BGJT1}
quotes \cite[Theorem~3.1]{JKMT}, which in turn uses the strict forward orbit.
The reason that the strict forward orbit is important is that for $K=k(t)$ with $x_0=t$,
the finite primes that ramify in $K_{\infty}/K$ are precisely those of the form $(t-f^i(0))$ for $i\geq 1$.
\end{remark}

\textbf{Acknowledgments}.
The first author gratefully acknowledges the support of NSF grant DMS-2401172.

\bibliographystyle{amsalpha}
\bibliography{biblio}

\end{document}